\long\def\colred#1\endred{{\color{red}#1}}
\long\def\colgreen#1\endgreen{{\color{green}#1}}
\long\def\colblue#1\endblue{{\color{blue}#1}}
\long\def\colmagenta#1\endmagenta{{\color{magenta}#1}}
\theoremstyle{plain}
\newtheorem{thm}{Theorem}[section]
\newtheorem{lemma}[thm]{Lemma}
\newtheorem{corollary}[thm]{Corollary}
\newtheorem{prop}[thm]{Proposition}
\newtoks\prt
\theoremstyle{definition}
\newtheorem{remark}[thm]{Remark}
\newtheorem{definition}[thm]{Definition}
\def\eqn#1$$#2$${\begin{equation}\label#1#2\end{equation}}
\numberwithin{equation}{section}
\let\ccolon=\colon
\let\ccolonequals=\colonequals
\def\adj{\operatorname{adj}}
\def\Adj{\operatorname{Adj}}
\def\ADJ{\operatorname{ADJ}}
\def\BVd{\dot{BV}}
\def\dadj{\mathcal{AD\!J}}
\def\djac{\mathcal{J}}
\def\Det{\operatorname{Det}}
\def\J{\mathcal{J}}
\def\D{\mathcal D}
\def\diam{\operatorname{diam}}
\def\dist{\operatorname{dist}}
\def\dive{\operatorname{div}}
\def\e{{\bf e}}
\def\epsilon{\varepsilon}
\def\ep{\varepsilon}
\def\en{\mathbb N}
\def\qe{\mathbb Q}
\def\er{\mathbb R}
\def\ff{\varphi}
\def\haus{\mathcal{H}}
\def\loc{\operatorname{loc}}
\def\M{\mathcal{M}}
\def\mir2{\mathcal{L}^2}
\def\phi{\varphi}
\def\r2{\er^2}
\def\sgn{\operatorname{sgn}}
\def\spt{\operatorname{supp}}
\def\supp{\operatorname{supp}}
\def\rn{\mathbb R^n}
\def\spt{\operatorname{supp}}
\def\sgn{\operatorname{sgn}}
\def\zet{\mathbb Z}
\newcommand{\inv}{^{-1}}
\newtoks\by
\newtoks\paper
\newtoks\book
\newtoks\jour
\newtoks\yr
\newtoks\pages
\newtoks\vol
\newtoks\publ
\def\ota{{\hbox\vol{???}}}
\def\cLear{\by=\ota\paper=\ota\book=\ota\jour=\ota\yr=\ota
  \pages=\ota\vol=\ota\publ=\ota}
\def\endpaper{\the\by, {\the\paper},
  \textit{\the\jour} \textbf{\the\vol} (\the\yr), \the\pages.\cLear}
\def\endbook{\the\by, \textit{\the\book}, \the\publ.\cLear}
\def\endprep{\the\by, \textit{\the\paper}, \the\jour.\cLear}
\def\endyearprep{\the\by, \textit{\the\paper}, \the\jour, (\the\yr).\cLear}
\def\name#1#2{#2 #1}
\title
{On distributional adjugate and derivative of the inverse}
\author[S. Hencl]{Stanislav Hencl}
\address{Department of Mathematical Analysis, Charles University, Sokolovsk\'{a} 
  83, 186 00
  Prague 8, Czech Republic}
\email{hencl@karlin.mff.cuni.cz}
\author[A. Kauranen]{Aapo Kauranen}
\address{Department of Mathematical Analysis, Sokolovska 83, Praha 8, 186 75, Charles University in Prague \and 
  Departament de Matem\`atiques, Universitat Aut\`onoma de Barcelona, 08193, Bellaterra (Barcelona), Spain}
\email{aapo.p.kauranen@gmail.com}
\author[J. Mal\'y]{Jan Mal\'y}
\address{Department of Mathematical Analysis, Charles University, Sokolovsk\'{a} 
  83, 186 00 Prague 8, Czech Republic}
\email{maly@karlin.mff.cuni.cz}
\thanks{SH and JM were supported by the grant GA\v{C}R P201/18-07996S. 
AK acknowledges financial support from the Spanish Ministry of
Economy and Competitiveness, through the ``Mar\'{\i}a de Maeztu'' Programme for Units of
Excellence in R\&D (MDM-2014- 0445).}
\subjclass[1991]{26B10, 30C65, 46E35}
\keywords{Bounded variation, distributional Jacobian}
\begin{document}

\begin{abstract}
Let $\Omega\subset\er^3$ be a domain and let $f\colon\Omega\to\er^3$ be a 
bi-$BV$ homeomorphism.
Very recently in \cite{HKL} it was shown that the distributional 
adjugate of $Df$ (and thus also of $Df^{-1}$) is a matrix-valued measure.
In the present paper we show that the components of 
$\Adj Df$
are equal to components of $Df^{-1}(f(U))$ as measures and that the absolutely continuous part of the distributional adjugate 
$\Adj Df$ 
equals to the pointwise adjugate $\adj Df(x)$ a.e.
We also show the equivalence of several approaches to the definition of the distributional adjugate.
\end{abstract}

\maketitle

\section{Introduction}
Suppose that $\Omega\subset \rn$ is an open set
and let $f \ccolon \Omega\to f(\Omega)\subset \rn$ be a homeomorphism. 
In this paper we study the weak differentiability of the inverse of a 
Sobolev or $BV$-homeomorphism. This problem is of 
particular importance as Sobolev and $BV$ spaces are commonly used as initial
spaces for existence problems in PDE's and the calculus of variations.
For instance, elasticity is a typical field where
both invertibility problems and Sobolev (or $BV$) regularity issues 
are relevant (see e.g.\ \cite{Ball}, \cite{CiaNec} and \cite{MST}).

The problem of the weak regularity of the inverse has attracted a big attention 
in the past decade. It started with the result of \cite{HK}, \cite{HKO} and \cite{CHM}  where 
it was shown that for homeomorphisms we have
\eqn{sobolev}
$$
\begin{aligned}
  &\bigl(f\in BV_{\loc}(\Omega,\er^2)\Rightarrow f^{-1}\in 
  BV_{\loc}(f(\Omega),\er^2)\bigr)\\
  &\text{ and }\bigl(f\in W^{n-1}_{\loc}(\Omega,\rn)\Rightarrow f^{-1}\in 
  BV_{\loc}(f(\Omega),\rn)\bigr).\\
\end{aligned}
$$
Moreover, it was shown there that these results are sharp in the scale of Sobolev spaces and moreover under additional assumption one can prove that even $f^{-1}\in W^{1,1}$.

By results of 
\cite{DDSS},
\cite{DS} and \cite{FMS} we know that for $f\in W^{1,n-1}$ we have not only $f^{-1}\in BV$ 
but also the total variation of the inverse satisfies
\eqn{adjformula}
$$
|Df^{-1}|(f(\Omega))=\int_{\Omega}|\adj Df(x)|\; dx
$$
the where $\adj A$ denotes the adjugate matrix to $A$, i.e. the matrix of $(n-1)\times(n-1)$ 
subdeterminants arranged in such a way that
$$
A\adj A=I\det A.
$$
This indicates that the adjugate of $Df$ could be significant also 
for the problem of existence of $Df^{-1}$. One could think
that the integrability of $|D f|^{n-1}$ in \eqref{sobolev}
is needed only to guarantee the integrability of $\adj Df$.
However, for $n\geq 3$ it is possible to construct a $W^{1,1}$ 
homeomorphism with $\adj Df\in L^1$ such that $f^{-1}\notin BV$ (see
\cite{H2}). 
The existence of such a mapping motivates a distributional  approach to the adjugate of the gradient matrix in the problem of characterization of the $BV$-regularity of the inverse.

We use the symbols $\Adj Df$, $\ADJ Df$ and $\dadj Df$ for various
versions of this concept. The definitions of the distributional adjugate
are presented and compared in Section \ref{sec:defs}. In fact,
we show that they are equivalent within the class of measures.

The distributional approach has been successfully used in \cite{HKL} to find
a necessary 
and sufficient 
condition for the $BV$ regularity of 
the inverse for $3$-dimensional $BV$ homeomorphisms. 
We conjecture that the assumption of finite Lebesgue area condition is superfluous.

\begin{thm}\label{regularity}
   Let $\Omega\subset\er^3$ be a domain and 
  $f\in \BVd(\Omega,\er^3)$ be a 
  homeomorphism.
Then $f^{-1}\in \BVd(f(\Omega),\er^3)$ if and only if $\dadj Df\in \M(\Omega,\er^{3\times 3})$ and 
   $f$ satisfies the finite Lebesgue area condition of Section \ref{sec:LebArea}.
\end{thm}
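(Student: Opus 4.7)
The plan is to exploit the reciprocity between $\Adj Df$ regarded as a matrix-valued measure on $\Omega$ and $Df^{-1}$ regarded as a matrix-valued measure on $f(\Omega)$. As already advertised in the abstract, the central identification is, for Borel $U\subset\Omega$,
$$
\Adj Df(U) = Df^{-1}(f(U))
$$
component-wise, whenever both sides are meaningful. This, together with the equivalence of the various definitions of the distributional adjugate established in Section \ref{sec:defs}, will be the main tool. I will use the notation $\Adj Df$, $\ADJ Df$, $\dadj Df$ interchangeably, invoking that equivalence.

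For the necessity direction ($f^{-1}\in\BVd \Rightarrow \dadj Df\in\M$ and finite Lebesgue area), I would start from $Df^{-1}$ being a finite $\er^{3\times 3}$-valued Radon measure on $f(\Omega)$ and define a candidate measure $\mu$ on $\Omega$ by $\mu(U):=Df^{-1}(f(U))$ (which is Borel since $f$ is a homeomorphism). The task is to identify $\mu$ with $\dadj Df$ distributionally: testing against $\phi\in C_c^\infty(\Omega,\er^3)$ and performing a change of variables $y=f(x)$, the integral $\int\phi\, d\mu$ becomes the distributional pairing that defines $\dadj Df$. The finite Lebesgue area condition then follows from the BV-regularity of $f^{-1}$ by parametrising the $2$-surface $f(\Sigma)$ for coordinate planes $\Sigma$ via $f^{-1}$ and bounding their area through the total variation of $f^{-1}$ on thin cylinders around $\Sigma$.

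For the sufficiency direction I would set up the reverse transport. Given that $\dadj Df$ is a finite matrix-valued measure on $\Omega$ and that the finite Lebesgue area condition holds, I would define $\nu(V):=\dadj Df(f^{-1}(V))$ for Borel $V\subset f(\Omega)$ and prove $\nu=Df^{-1}$ as distributions, i.e. that for every $\phi\in C_c^\infty(f(\Omega),\er^3)$,
$$
\int_{f(\Omega)} f^{-1}(y)\cdot\dive\phi(y)\, dy = -\int_{f(\Omega)} \phi\, d\nu.
$$
The left-hand side is transferred to $\Omega$ via the area formula, becoming an integral involving $\phi\circ f$ tested against $\adj Df$-type expressions on the $2$-dimensional slicing surfaces of $f$. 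The finite area condition is exactly what guarantees that these surface integrals are finite and that the relevant traces are well-defined. An approximation $\phi_j\circ f\to\phi\circ f$ (with truncations where $f$ degenerates) together with the definition of $\dadj Df$ should close the identity, yielding $f^{-1}\in\BVd$.

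The main obstacle is carrying out the sufficiency direction rigorously: a $BV$ homeomorphism is generally not approximable by diffeomorphisms in any strong sense, so one cannot simply smooth $f$ and pass to the limit in the change-of-variables identity. Instead, one has to work directly with the distributional definition of $\dadj Df$ via the $(n-1)\times(n-1)$ minor expansions tested against products of components of $f$, and then reconcile these with surface-area parametrisations of level sets --- this is the step that genuinely uses the finite Lebesgue area hypothesis. A secondary subtlety is that the singular part of $\dadj Df$ (supported where $f$ is not classically differentiable) must be shown to push forward under $f$ precisely to the singular part of $Df^{-1}$, which requires care with sets of measure zero on both sides of the homeomorphism.
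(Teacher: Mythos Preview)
The paper does not prove Theorem~\ref{regularity}. Look at the sentence immediately preceding it: ``The distributional approach has been successfully used in \cite{HKL} to find a necessary and sufficient condition for the $BV$ regularity of the inverse\dots'' --- and then look at the sentence listing which theorems are proved in Section~\ref{s:cor}: only Theorems~\ref{t:main}, \ref{cor} and \ref{acpart}. Theorem~\ref{regularity} is imported from \cite{HKL} as a black box and is \emph{used} here (in the proof of Theorem~\ref{cor}), not proved. So there is no ``paper's own proof'' to compare your proposal against.

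That said, your sketch has a structural circularity worth flagging. You plan to prove both directions via the identity $\Adj Df(U)=Df^{-1}(f(U))$. In this paper that identity is Theorem~\ref{t:main}, and it is established \emph{under the hypothesis} that $f^{-1}\in\BVd$. So for the necessity direction your plan is coherent: assuming $f^{-1}\in\BVd$, Theorem~\ref{t:main} gives the identity and hence $\Adj Df\in\M$. But for sufficiency you cannot invoke Theorem~\ref{t:main}, because its proof (Sections~\ref{s:grad}--\ref{s:Adj}) repeatedly uses $f^{-1}\in\BVd$ --- e.g.\ Lemma~\ref{l:integrable} reduces integrability of $\deg(g,Q,\cdot)$ to Lemma~\ref{l:integrablea} applied to $(f^{-1})_3$, and Lemma~\ref{l:measzero} goes through $\haus^2(f(H))<\infty$, which in \cite{HKL} is tied to the $BV$ regularity of the inverse. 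Your sufficiency argument therefore assumes what it is meant to prove.

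The actual proof in \cite{HKL} of the sufficiency direction does not transport the measure and then identify it distributionally; it estimates $|Df^{-1}|$ directly via degree and multiplicity arguments on slices, and the finite Lebesgue area condition enters precisely to bound the multiplicity integrals $\int N(f(\cdot,\cdot,t),\cdot)\,dy$ that control the slice variations. Your remarks about ``surface integrals'' and ``traces'' gesture at this, but the mechanism is different from a change-of-variables identity: one never writes down $Df^{-1}$ as a push-forward of $\Adj Df$ before knowing it is a measure.
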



Here $\M$ stands
for the class of all finite (possibly signed or vector-valued) 
Radon measures 
and $\BVd(\Omega,\er^3)$ is the homogeneous $BV$ space, namely, the class
of all $BV_{\loc}$ mappings $f$ on $\Omega$ such that the total variation
of $Df$ is finite, whereas the global integrability of $f$ is not required.
Note that the integrability of $f$ is an issue only if $\Omega$ or
$f(\Omega)$ is unbounded.



The classical inverse mapping theorem states that the formula
\eqn{classical}
$$
\nabla f^{-1}(f(x))J_f(x)=\adj\nabla f(x)
$$
holds for $f$ if $f$ is a regular $C^1$ mapping. 
We are interested in validity of the corresponding formula in the $BV$ setting.
Since the objects considered in \eqref{classical} keep sense only as measures,
the formula should be rewritten as
\eqn{main}
$$
 Df^{-1}(f(U))=\Adj Df(U) \text{ for all open sets }U\subset\Omega.
$$ 
In the planar case, the formula \eqref{main} 
has been proved by Quittnerov\'a \cite{Quit}. 
Even stronger 
results are established by
D'Onofrio, Mal\'y, Sbordone and Schiattarella \cite{DMSS}. 
For $n>2$, the formula \eqref{main}
has been obtained by Quittnerov\'a \cite{Quit}
under the assumption that $f\in W^{1,n-1}$.
Our main goal is to prove \eqref{main} in $\er^3$ assuming only that 
$f$ and $f^{-1}$ are $BV$.

\begin{thm}\label{t:main}
Let $\Omega\subset\er^3$ be a domain and 
  $f\in \BVd(\Omega,\er^3)$ be a 
  homeomorphism such that $f^{-1}\in \BVd(f(\Omega),\er^3)$.
    Then  
\eqn{main1}    
$$
 Df^{-1}(f(U))=\Adj Df(U) \text{ for all open sets }U\subset\Omega.
$$ 
\end{thm}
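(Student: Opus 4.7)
The plan is to view both sides of~\eqref{main1} as finite matrix-valued Radon measures on $\Omega$. The left-hand side is the pushforward $(f^{-1})_*Df^{-1}$, which is well-defined and finite because $f$ is a homeomorphism and $Df^{-1}\in\M(f(\Omega),\er^{3\times3})$; the right-hand side $\Adj Df$ is a finite matrix-valued Radon measure on $\Omega$ by Theorem~\ref{regularity} (since $f^{-1}\in\BVd$ forces both $\dadj Df\in\M$ and the finite Lebesgue area condition). By outer regularity of Radon measures, equality on all open $U\subset\Omega$ is equivalent to equality as measures, and by the Riesz representation theorem this reduces to showing, for every $\phi\in C_c^\infty(\Omega)$ and every $i,j\in\{1,2,3\}$,
\begin{equation*}
\int_{f(\Omega)}\phi(f^{-1}(y))\,dD_jf^{-1}_i(y)=\langle(\Adj Df)_{ij},\phi\rangle.
\end{equation*}
The left-hand side is a valid pairing since $\phi\circ f^{-1}$ is continuous with compact support in $f(\Omega)$.

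\textbf{Smooth case as template.} For a smooth diffeomorphism the identity follows from a short chain: integrate by parts on the left to obtain $-\int D_j(\phi\circ f^{-1})\,f^{-1}_i\,dy$; apply the chain rule together with the classical identity $\nabla f^{-1}(f(x))J_f(x)=\adj\nabla f(x)$ from~\eqref{classical}; change variables via the area formula to land on $\Omega$, getting $-\int_\Omega D_k\phi(x)(\adj\nabla f)_{kj}(x)\,x_i\,dx$; and integrate by parts in $x$ using the Piola identity $\sum_k D_k(\adj\nabla f)_{kj}=0$, which yields $\int_\Omega\phi(\adj\nabla f)_{ij}\,dx=\langle\Adj Df,\phi\rangle_{ij}$. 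The plan for the $BV$ setting is to reproduce this chain distributionally, selecting from Section~\ref{sec:defs} the integration-by-parts form of $\Adj Df$ in which each entry is a distributional divergence of a bilinear expression in $f$ and $Df$, interpreted through the pairings of $BV$ functions with measures established there.

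\textbf{Main obstacle.} The delicate step is the initial integration by parts: $\phi\circ f^{-1}$ is only continuous, not $C^1$, so one cannot pair its derivative with the measure $Df^{-1}$ directly. A first strategy is to mollify $f^{-1}$ inside $f(\Omega)$, obtaining smooth $g_\varepsilon$ with $g_\varepsilon\to f^{-1}$ pointwise and $Dg_\varepsilon\to Df^{-1}$ in the weak-$*$ sense of measures; the smooth identity applied with $g_\varepsilon$ in place of the inverse yields, in the limit, the desired equality, provided the distributional combinations of $f$ and $Df$ on the right are continuous with respect to this convergence (a property guaranteed by the equivalent formulations of $\Adj Df$ proven in Section~\ref{sec:defs}). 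A parallel, more geometric route is to restrict to open sets $U\subset\subset\Omega$ with smooth boundary along which $f$ has well-behaved $BV$ traces and $f(\partial U)$ is $\haus^2$-regular—a class dense enough to determine both measures, thanks to the finite Lebesgue area condition granted by Theorem~\ref{regularity}. On such $U$ one expresses $Df^{-1}(f(U))$ as a surface integral of $f^{-1}$ over $\partial f(U)=f(\partial U)$ via the $BV$ Gauss--Green theorem, and expresses $\Adj Df(U)$ as a surface integral over $\partial U$ via the null-Lagrangian boundary representation; the two surface integrals then coincide through the pointwise identity
\begin{equation*}
\cof\nabla f(x)\,\nu_U(x)\,d\haus^2(x)=\nu_{f(U)}(f(x))\,d\haus^2(y),
\end{equation*}
with density of such $U$ extending the result to arbitrary open sets. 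The bulk of the technical work lies in justifying these surface-level manipulations for $f\in BV$.
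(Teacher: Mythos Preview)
Your proposal identifies the correct smooth template but neither strategy you outline can be made to work at the $BV$ level, and this is precisely the difficulty the paper is built to overcome. In the mollification route, replacing $f^{-1}$ by a smooth $g_\varepsilon$ severs the algebraic link $g_\varepsilon\circ f=\id$, so after the chain rule you are left with $\sum_k D_k\phi(g_\varepsilon(y))\,D_j(g_\varepsilon)_k(y)\,(g_\varepsilon)_i(y)$ and there is no legitimate change of variables $y=f(x)$ (since $f$ is only $BV$) and no way to convert $D_j(g_\varepsilon)_k(f(x))\,J_f(x)$ into $(\adj\nabla f)_{kj}(x)$---that identity is exactly what is being proved. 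Nothing in Section~\ref{sec:defs} provides the ``continuity under mollification of $f^{-1}$'' that you invoke; that section only shows $\ADJ Df=\Adj Df$, both defined through $f$, not through $f^{-1}$. The Gauss--Green route has the dual problem: for a $BV$ homeomorphism the trace of $f$ on a smooth hypersurface $\partial U$ is merely $L^1$, so the surface area formula relating $\cof\nabla f\,\nu_U\,d\haus^2$ on $\partial U$ to $\nu_{f(U)}\,d\haus^2$ on $f(\partial U)$ is unavailable, and the ``null-Lagrangian boundary representation'' of $\Adj Df(U)$ is not established anywhere in the paper for $BV$ mappings.

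The paper circumvents both obstacles by passing through the topological degree, which is stable under uniform (not Sobolev) approximation. It introduces the auxiliary maps $g(x)=(f_1,f_2,x_3)$ and $h(y)=(y_1,y_2,(f^{-1})_3)$ satisfying $g=h\circ f$, and proves separately (Theorems~\ref{t:D=deg} and~\ref{t:Adj=deg}) that $D_3(f^{-1})_3(f(U))=\int\deg(h,f(U),z)\,dz$ and $\Adj_{33}Df(U)=\int\deg(g,U,z)\,dz$ for a rich enough family of ``good'' cubes $U$. Since $f$ is a homeomorphism, the degree composition formula (Lemma~\ref{composition}) gives $\deg(g,U,\cdot)=\deg(h,f(U),\cdot)$, which closes the loop. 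The substantial technical content---Sections~\ref{s:dim2}, \ref{s:grad}, \ref{s:Adj}---is devoted to these two degree representations, and this machinery is not reflected in your outline.
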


In coordinates, \eqref{main1} reads as 
\eqn{mainij}
$$
(D_j(f^{-1})_i)(f(U))
=(\Adj_{ij} Df) (U),\qquad i,j\in\{1,2,3\}.
$$
This improves the result of \cite{DS} where the equality of variations
in \eqref{mainij} is shown 
for $W^{1,n-1}$ homeomorphism for $n\geq 3$.

As a corollary of our results we show that in some cases 
it is possible to verify the somewhat technical assumption $\Adj Df\in \M$ easily using coordinate functions  $f=(f_1,f_2,f_3)$. 
Moreover, under the same assumptions the finite Lebesgue area condition holds and we have the $BV$ regularity of the inverse.

\begin{thm}\label{cor}
  Let $\Omega\subset\er^3$ be a domain and $f\in \BVd(\Omega,\er^3)$ be a 
  continuous mapping.
  Assume 
  that 
\par  
{\rm(a)}  $f_i\in W^{1,p_{i}}(\Omega)$ 
  where $p_1,p_2,p_3\in [1,\infty]$ and $\frac1{p_i}+\frac1{p_j}\le 1$
 for each distinct $i,j$ (with the convention $\frac1\infty=0$),\par\noindent
 or that \par
{\rm(b)} 
at least two coordinates of $f$ are in $C^1(\overline\Omega)$.\par\noindent
Then $\Adj Df\in \M(\Omega,\er^{3\times 3})$ and $f$ satisfies the finite Lebesgue area condition (see Sec. \ref{sec:LebArea}).
Therefore, $f^{-1}\in \BVd(f(\Omega),\er^3)$ if $f$ is a homeomorphism.
\end{thm}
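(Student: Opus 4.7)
The plan is to verify the two hypotheses of Theorem \ref{regularity}---that $\Adj Df$ is a matrix-valued Radon measure on $\Omega$ and that $f$ satisfies the finite Lebesgue area condition of Section \ref{sec:LebArea}---and then invoke Theorem \ref{regularity} to conclude the $BV$ regularity of $f^{-1}$. Unwinding the definition of $\Adj Df$ from Section \ref{sec:defs}, each of its nine entries is, up to sign, a distributional $2\times 2$ Jacobian of a pair of coordinates $(f_l,f_m)$ with $l\ne m$, with respect to a pair of variables $(x_k,x_{k'})$ with $k\ne k'$. So the problem reduces to showing that each such pairwise distributional $2\times 2$ Jacobian is a finite Radon measure on $\Omega$.

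\emph{Case (a).} Fix $i\ne j$; the hypothesis $\tfrac{1}{p_i}+\tfrac{1}{p_j}\le 1$ together with H\"older puts every product of a partial of $f_i$ with a partial of $f_j$ in $L^1(\Omega)$, so the pointwise $2\times 2$ Jacobian of $(f_i,f_j)$ lies in $L^1(\Omega)$. A standard mollification argument---approximate by smooth $f_i^\ep,f_j^\ep$, use the classical identity for smooth mappings, and pass to the limit using joint continuity of the Jacobian bilinear form on $W^{1,p_i}\times W^{1,p_j}$---then identifies the distributional Jacobian with the pointwise one, so it is a finite signed measure.

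\emph{Case (b).} Assume $f_1,f_2\in C^1(\overline\Omega)$. Entries of $\Adj Df$ that involve only $f_1$ and $f_2$ are continuous functions, hence trivially measures. For the entries pairing $f_3$ with one of $f_1,f_2$, write the relevant distributional $2\times 2$ Jacobian in divergence form
$$
\partial_k\bigl(f_1\,\partial_{k'}f_3\bigr)-\partial_{k'}\bigl(f_1\,\partial_k f_3\bigr)
$$
and expand via the distributional product rule (valid because $f_1\in C^1$): since mixed partials commute as distributions, the contributions $f_1\,\partial_k\partial_{k'}f_3$ and $f_1\,\partial_{k'}\partial_k f_3$ cancel, leaving $\partial_k f_1\cdot\partial_{k'}f_3-\partial_{k'}f_1\cdot\partial_k f_3$---a difference of products of continuous functions with components of the Radon measure $Df_3$, and therefore a finite signed measure of total variation bounded by $2\|Df_1\|_\infty\,|Df_3|(\Omega)$.

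For the finite Lebesgue area condition of Section \ref{sec:LebArea}, I expect it to reduce, via the pointwise adjugate together with a slicing or Fubini argument, to the $L^1$-integrability of $|\adj Df|$; this holds under both (a) and (b) from the arguments above (in (a) by H\"older, in (b) by the explicit measure representation paired with $Df_1,Df_2\in C^0$). The step I expect to demand most care is case (a) at the endpoint exponents (e.g.\ $p_i=1$, $p_j=\infty$) where direct mollification in $W^{1,\infty}$ is unavailable; the remedy is to approximate only in the $W^{1,p_i}$-slot with $p_i<\infty$ and exploit weak-$*$ convergence in the other slot, preserving the bilinear structure in the limit. Case (b), once the divergence formulation is set up, is then essentially an exercise in distributional calculus.
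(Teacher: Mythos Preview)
Your Step 1 (showing $\Adj Df\in\M$) is correct and essentially matches the paper: for case (a) the paper proves the same identity $\langle\Adj_{33}Df,\ff\rangle=\int\ff\det(D_jf_i)_{i,j=1,2}\,dx$ via a two-stage mollification (first in one coordinate, then in the other, handling the $p=\infty$ endpoint by weak* convergence exactly as you anticipate), and for case (b) the paper reaches the measure representation $\langle\Adj_{33}Df,\ff\rangle=\langle Df_1,\ff\,Df_2\times\e_3\rangle$ by the same approximation scheme rather than by your direct product-rule computation, but the two routes are equivalent.

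The genuine gap is the finite Lebesgue area condition. Your sentence ``I expect it to reduce \dots\ to the $L^1$-integrability of $|\adj Df|$'' is not a proof, and the reduction is not automatic: the Lebesgue area $L(g)$ of a slice $g=f\circ\kappa_i^t\colon U\subset\er^2\to\er^3$ is defined via piecewise-linear approximation, and for a continuous $BV$ map it is \emph{not} in general bounded by $\int_U|\partial_1 g\times\partial_2 g|$, because the singular part of $Dg$ can create area. In case (a) the paper invokes a theorem of Morrey (Cesari \cite[Section 5.13]{C}) that gives precisely this bound under the Sobolev hypothesis; you would need to locate and cite that result. In case (b) the paper supplies the missing argument: approximate the slice $g$ (with, say, $g_2,g_3\in C^1$ and $g_1\in BV$) by smooth $g^k$ satisfying $Dg^k_2,Dg^k_3\to Dg_2,Dg_3$ uniformly and $|Dg^k_1|(U)\to|Dg_1|(U)$, then combine the lower semicontinuity $L(g)\le\liminf_k L(g^k)$ with the elementary bound $L(g^k)\le\sum_{l<m}\int_U|\det(Dg^k_l,Dg^k_m)|$, which is uniformly controlled by $\|Dg^k_1\|_1$ and $\|Dg^k_2\|_\infty,\|Dg^k_3\|_\infty$. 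Neither the citation nor this approximation/lower-semicontinuity step appears in your proposal, so the second hypothesis of Theorem \ref{regularity} is left unverified.
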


It is known that the absolutely continuous part of the distributional Jacobian equals to the pointwise Jacobian a.e.\ for nice enough $f$
(see De Lellis \cite[Lemma 4.7]{D} and M\"uller \cite{M}). 
Similar statement holds also for the distributional adjugate. 

\begin{thm}\label{acpart}
  Let $\Omega\subset\er^3$ be a domain and 
  $f\in \BVd(\Omega,\er^3)$ be a 
  continuous mapping such that $\Adj Df\in\M(\Omega,\er^{3\times 3})$.
   Then the absolutely continuous part of  $\Adj Df$ 
(with respect to Lebesgue measure) equals to the pointwise 
adjugate  $\adj Df(x)$ for a.e. $x\in\Omega$. 
\end{thm}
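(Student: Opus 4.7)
The plan is to establish that $\omega_{ij}(x_0)=(\adj Df(x_0))_{ij}$ at almost every $x_0\in\Omega$, where $\omega$ denotes the Radon--Nikodym density of $\Adj Df$ with respect to Lebesgue measure. Standard BV theory provides a set $E\subset\Omega$ of full Lebesgue measure on which, for every $x_0\in E$: (i) $f$ is approximately differentiable with approximate gradient $A=Df(x_0)$; (ii) $x_0$ is a Lebesgue point of each entry $\partial_l f_q$ of the absolutely continuous part of $Df$, so $A_{ql}=\partial_l f_q(x_0)$; (iii) the singular part $(Df)^s$ has vanishing upper density at $x_0$; (iv) $x_0$ is a Lebesgue point of $\omega$, and the singular part of $\Adj Df$ has vanishing upper density at $x_0$. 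Fixing $\psi\in C_c^\infty(B(0,1))$ with $\int\psi=1$ and setting $\psi_r(x):=\psi((x-x_0)/r)$, Lebesgue differentiation at (iv) yields $\omega_{ij}(x_0)=\lim_{r\to 0} r^{-3}\langle(\Adj Df)_{ij},\psi_r\rangle$, so the task reduces to identifying this limit with $(\adj A)_{ij}$.

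Using the distributional representation of $\Adj Df$ supplied in Section \ref{sec:defs}, which mirrors the classical identity $(\adj Df)_{ij}=\tfrac{1}{2}\varepsilon_{jpq}\varepsilon_{ikl}D_kf_p\,D_lf_q$ for smooth $f$, we may take
$$\langle (\Adj Df)_{ij},\varphi\rangle = -\tfrac{1}{2}\varepsilon_{jpq}\varepsilon_{ikl}\int f_p\,D_k\varphi\,d(D_lf_q).$$
Substituting $\varphi=\psi_r$, changing variables $y=(x-x_0)/r$, and introducing the rescaled measures $H^r_{lq}$ on $B(0,1)$ with $H^r_{lq}(F):=r^{-3}(D_lf_q)(x_0+rF)$, one obtains
$$r^{-3}\langle(\Adj Df)_{ij},\psi_r\rangle = -\tfrac{1}{2r}\varepsilon_{jpq}\varepsilon_{ikl}\int_{B(0,1)} f_p(x_0+ry)\,D_k\psi(y)\,dH^r_{lq}(y).$$
By (ii)--(iii), $H^r_{lq}\rightharpoonup A_{ql}\mathcal{L}^3|_{B(0,1)}$ weakly* as $r\to 0$. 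Writing $f_p(x_0+ry)=f_p(x_0)+rA_{pm}y_m+e^r_p(y)$, the piece from the constant $f_p(x_0)$ contributes \emph{identically} zero: an integration by parts transfers $D_l$ onto $\psi$, using $H^r_{lq}=D_lg^r_q$ for the rescaled BV function $g^r_q(y):=(f_q(x_0+ry)-f_q(x_0))/r$, and the resulting expression $\int g^r_q\sum_{k,l}\varepsilon_{ikl}D_lD_k\psi\,dy$ vanishes because $\sum_{k,l}\varepsilon_{ikl}D_lD_k\psi\equiv 0$ (curl of a gradient). The linear piece $rA_{pm}y_m$ yields, by the weak* limit and $\int y_mD_k\psi\,dy=-\delta_{mk}$, the limit $\tfrac{1}{2}\varepsilon_{jpq}\varepsilon_{ikl}A_{pk}A_{ql}=(\adj A)_{ij}$, which is the desired value.

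The principal obstacle is the residual error $-\tfrac{1}{2r}\varepsilon_{jpq}\varepsilon_{ikl}\int e^r_p\,D_k\psi\,dH^r_{lq}$, which must vanish as $r\to 0$. While approximate differentiability gives $e^r_p/r\to 0$ in measure on $B(0,1)$, and the total mass $|H^r_{lq}|(B(0,1))$ stays uniformly bounded, the singular part of $|H^r_{lq}|$ need not have negligible mass at every finite scale, which blocks a direct application of weak* convergence. The expected remedy is a Lusin-type truncation: replace $f$ on a set of arbitrarily small Lebesgue measure by a Lipschitz function (using the standard $BV$ Lusin property), reducing the integral on the good set to the smooth case where the formula holds by pointwise calculation, and controlling the small exceptional contribution by (iii) together with the equi-integrability of the absolutely continuous densities of $|H^r_{lq}|$. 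Combining the three ingredients---exact cancellation of the constant term, convergence of the linear term, and smallness of the error---closes the argument.
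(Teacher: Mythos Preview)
Your blow-up strategy is natural, but the gap you yourself flag is genuine and the Lusin remedy you sketch does not close it. After rescaling, the quantity $r^{-3}\langle(\Adj Df)_{ij},\psi_r\rangle$ equals $\langle(\Adj Dg^r)_{ij},\psi\rangle$ for the blow-ups $g^r(y)=(f(x_0+ry)-f(x_0))/r$, and conditions (i)--(iii) give $g^r\to A\,y$ \emph{strongly in $BV$} on $B(0,1)$. The whole question is therefore whether strong $BV$ convergence of continuous maps forces convergence of their distributional adjugates. Your error term is exactly the obstruction: in $\int (e^r_p/r)\,D_k\psi\,dH^r_{lq}$ you only know $e^r_p/r\to 0$ in $L^1$ while the densities of the absolutely continuous parts of $H^r_{lq}$ also converge merely in $L^1$; the product of two $L^1$-convergent factors need not converge. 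Your proposed Lusin truncation does not help directly, because $\Adj Df$ is defined via integration by parts and is not a local operator: replacing $f$ by a Lipschitz map on the complement of a small set says nothing about the behaviour of $\Adj Df$ on the good set, and there is no a~priori bound on the error created, precisely because you have not yet established the pointwise formula.

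The paper circumvents this difficulty by a dimension-reduction argument rather than a direct $3$D blow-up. By Proposition~\ref{aapo} the measure $\Adj_{ij}Df$ disintegrates over the $i$-th coordinate into the two-dimensional distributional Jacobians $\Det\bigl(D(f_{j'}\circ\kappa_i^t),D(f_{j''}\circ\kappa_i^t)\bigr)$, and Lemma~\ref{l:absdisint} shows that taking absolutely continuous parts commutes with this disintegration. One is thus reduced to the two-dimensional statement (Theorem~\ref{t:absjac}): for a continuous $BV$ map $h=(u,v)$ with $\mathcal J_h\in\mathcal M$, the a.c.\ part of $\mathcal J_h$ is $J_h$. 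There the paper performs the same blow-up $h_j(y)=h(r_jy)/r_j\to\nabla h(0)\,y$ strongly in $BV$, but instead of estimating a bulk product it invokes the Green-type formula $\mathcal J_h(B(0,\rho))=\int_{\partial B(0,\rho)}u\,dv$ (Lemma~\ref{l:green}). Strong $BV$ convergence on $B(0,1)$ yields, for a.e.\ radius $\rho$, strong $BV$ convergence on the one-dimensional circle $\partial B(0,\rho)$, and in dimension one strong $BV$ convergence of continuous functions implies uniform convergence, so the Stieltjes integrals $\int_{\partial B(0,\rho)}u_j\,dv_j$ pass to the limit (Lemma~\ref{l:bvconv}). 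This boundary-integral trick is precisely what replaces the missing control of your bulk error term.
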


The proofs of Theorems \ref{t:main}, \ref{cor} and \ref{acpart} are given in Section \ref{s:cor}.
Since the proof of \eqref{mainij} is the same for all choices of coordinates,
we demonstrate it on the choice $i=j=3$, which leads to the most comfortable notation.
The method is to express both parts of \eqref{mainij} in terms of degree.
The left hand part is handled in Section \ref{s:grad}, the right hand part in Section
\ref{s:Adj}. We need a two-dimensional degree formula for the distributional Jacobian
derived in Section \ref{s:dim2}. The comparison of various definitions of the distributional
adjugate is given in Section \ref{sec:defs}. To complete the list, Section \ref{s:pre}
is devoted to various preliminaries.

\section{Preliminaries}\label{s:pre}

For a domain $\Omega \subset \rn$ we denote by  $\D(\Omega)$ those smooth
functions $\varphi$ whose support is compactly contained in $\Omega$,
i.e.\ $\supp \varphi \subset \subset \Omega$.

Given a distribution $T$ on an open set $\Omega$, 
the action of $T$ on a test function
$\ff\in\D(\Omega)$ is denoted by $\langle T,\ff\rangle$. This can be extended to 
more general test functions according to the quality of $T$, for example,
to $T$-integrable test functions if $T$ is a measure.

The total variation of an $\rn$-valued Radon measure $\mu$ is the measure $|\mu|$ such that
$$
\langle |\mu|,\psi\rangle\ccolonequals  \sup\left\{\int_{\rn} \phi \cdot d\mu:\ \phi\in C_0(A;\rn),\ 
|\phi|\le \psi\right\},\qquad \psi\in C_0^+(\rn).
$$
Given two vectors $u,v\in\er^3$ we denote by $u\times v$ their cross product, defined by the property
$$
w\cdot (u\times v)=\det(w,u,v),\qquad w\in\er^3.
$$

\subsection{Slicing of BV function}\label{ss:slicing}
Let $f\ccolon\Omega\to\er^m$ be a BV function and $\ff\in\D(\Omega)$.
For simplicity we assume that $\Omega=(0,1)^3$.
Then
$$
\langle 
D_1f,\;\ff
\rangle
=
\int_{(0,1)^2}\langle D_1f(\cdot,x_2,x_3),\;\ff(\cdot,x_2,x_3)\rangle\,dx_2\,dx_3
$$
and
$$
\langle 
|D_1f|,\;\ff
\rangle
=
\int_{(0,1)^2}\langle |D_1|f(\cdot,x_2,x_3),\;\ff(\cdot,x_2,x_3)\rangle\,dx_2\,dx_3,
$$
see e.g.\ \cite[Theorem 3.103]{AFP}.
Integrating with respect to $x_2$ we obtain
$$
\langle 
D_1f,\;\ff
\rangle
=
\int_{(0,1)}\langle D_1f(\cdot,\cdot,x_3),\;\ff(\cdot,\cdot,x_3)\rangle\,dx_3.
$$
and
$$
\langle 
|D_1f|,\;\ff
\rangle
=
\int_{(0,1)}\langle |D_1f|(\cdot,\cdot,x_3),\;\ff(\cdot,\cdot,x_3)\rangle\,dx_3.
$$
Similarly we express $D_2f$ by integration over $x_3$
(but not $D_3f$). By approximation we observe that 
these identities can be extended to test functions $\ff\in C_0(\Omega)$.

\subsection{Topological degree}
\label{sec:TopologicalDegree}\
For 
a bounded open set
$\Omega \subset \rn$ and a given smooth map $f \ccolon 
\overline\Omega 
\to \rn$
we define the \textit{topological degree} as
\begin{align*}
  \deg(f,\Omega, y_{0}) = \sum_{x \in \Omega \cap f\inv\{y_0\}} \sgn(J_{f}(x))
\end{align*}
for a point $y_0\in\rn\setminus f(\partial\Omega)$
if $J_{f}(x) \neq 0$ for each $x \in f^{-1}(y_{0})$. This definition can be extended to arbitrary continuous mappings and each point $y_0\notin f(\partial\Omega)$, see e.g.\ \cite[Section 1.2]{FG} or \cite[Chapter 3.2]{HKbook}. For our purposes the following property of the topological degree is crucial;
see \cite[Definition 1.18]{FG}.
\begin{lemma}
  \label{lemma:TopologicalDegree}\ 
  Let $\Omega \subset \rn$ be a 
  bounded open set and
  $f \ccolon\overline\Omega\to \rn$ 
  be a continuous function.
  Then for any point $y_0 \in \rn \setminus 
  f (\partial \Omega)$
  and any continuous mapping $g \ccolon \overline\Omega\to \rn$ satisfying
  \begin{align*}
    | f - g |
    \leq \operatorname{dist}\left( y_0 , 
    f(\partial \Omega)
     \right)\quad \text{on }\partial \Omega
  \end{align*}
  we have $\deg(f, \Omega, y_0) = \deg(g, \Omega, y_0)$.
\end{lemma}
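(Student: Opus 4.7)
The plan is to deduce this from the standard homotopy invariance of the topological degree applied to the straight-line homotopy between $f$ and $g$. Setting $h_t(x) := (1-t)f(x) + tg(x)$ for $t\in[0,1]$ and $x\in\overline\Omega$, the family $\{h_t\}$ is continuous jointly in $(t,x)$, with $h_0=f$ and $h_1=g$. Provided that $y_0\notin h_t(\partial\Omega)$ for every $t\in[0,1]$, the homotopy invariance of degree (as formulated in \cite[Section 1.2]{FG}) makes $t\mapsto \deg(h_t,\Omega,y_0)$ constant, so in particular $\deg(f,\Omega,y_0)=\deg(g,\Omega,y_0)$.

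To verify the boundary avoidance I fix $x\in\partial\Omega$ and $t\in[0,1]$ and estimate, using the triangle inequality and the hypothesis $|f-g|\le\dist(y_0,f(\partial\Omega))$ on $\partial\Omega$:
$$
|h_t(x)-y_0|\ge |f(x)-y_0|-t|g(x)-f(x)|\ge (1-t)\dist(y_0,f(\partial\Omega)).
$$
Since $f(\partial\Omega)$ is compact and $y_0\notin f(\partial\Omega)$, the distance $\dist(y_0,f(\partial\Omega))$ is strictly positive, so this bound is strict whenever $t\in[0,1)$, which rules out $y_0\in h_t(\partial\Omega)$ on $[0,1)$.

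The only delicate point is the endpoint $t=1$, where the estimate degenerates to $|g(x)-y_0|\ge 0$ and does not on its own rule out $y_0\in g(\partial\Omega)$. I would handle this by a small perturbation: set $g_\eps := (1-\eps)g + \eps f$ for $\eps\in(0,1)$, so that $|f-g_\eps| = (1-\eps)|f-g| \le (1-\eps)\dist(y_0,f(\partial\Omega))$, a strict inequality. The homotopy argument above, applied to the pair $(f, g_\eps)$, yields $\deg(f,\Omega,y_0) = \deg(g_\eps,\Omega,y_0)$ for every $\eps\in(0,1)$. Finally, $g_\eps\to g$ uniformly as $\eps\to 0^+$, and the standard continuity of the degree on the space of continuous maps whose image of $\partial\Omega$ avoids $y_0$ allows passing to the limit. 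Conceptually, the main obstacle is thus to reduce the non-strict hypothesis to the strict one; once that is in hand, the linear homotopy and the classical invariance theorem do the rest.
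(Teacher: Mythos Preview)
The paper does not prove this lemma; it simply records it as a known property of the topological degree and cites \cite[Definition 1.18]{FG}. Your straight-line homotopy argument is exactly the standard proof underlying that reference, and the substance is correct.

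One point about your treatment of $t=1$: the perturbation via $g_\eps$ is either unnecessary or circular. The conclusion $\deg(g,\Omega,y_0)=\deg(f,\Omega,y_0)$ only makes sense if $y_0\notin g(\partial\Omega)$, so this has to be read as an implicit hypothesis. Under that hypothesis, $|h_1(x)-y_0|=|g(x)-y_0|>0$ for every $x\in\partial\Omega$, and combined with your estimate $|h_t(x)-y_0|\ge(1-t)\dist(y_0,f(\partial\Omega))>0$ for $t\in[0,1)$, the homotopy avoids $y_0$ on the whole interval $[0,1]$; homotopy invariance then applies directly and no limiting step is needed. Conversely, if $y_0\in g(\partial\Omega)$ were genuinely allowed, your final passage to the limit would fail, since $g$ does not lie in the space of maps on which you invoke continuity of the degree. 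So the cleanest repair is to observe that the conclusion presupposes $y_0\notin g(\partial\Omega)$ and drop the $g_\eps$ detour.
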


Moreover, we need to use also degree composition formula see \cite[Proposition IV.6.1]{OR}.

\begin{lemma}
  \label{composition}
  Let $\Omega \subset \rn$ be a bounded open set.
Let  $h \ccolon \overline\Omega \to \rn$ and $g:\rn\to \rn$ be 
\colblue
a 
\endblue
continuous function. 
  Assume that $y\notin g(h(\partial \Omega))$ and let $\Delta_i$ be the bounded
  connected components of $\rn\setminus h(\partial \Omega)$.
  Then
$$
\deg(g\circ h, \Omega, y) = \sum_i \deg(g, \Delta_i, y) \deg(h,\Omega,\Delta_i).
$$
\end{lemma}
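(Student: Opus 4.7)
The plan is to reduce to a smooth, transversal situation by approximation, compute $\deg(g\circ h,\Omega,y)$ as a signed count of preimages, and then regroup those preimages according to which component $\Delta_i$ contains their image under $h$. First I would observe that $\deg(h,\Omega,\cdot)$ is locally constant on $\rn\setminus h(\partial\Omega)$, so $\deg(h,\Omega,\Delta_i)$ is unambiguous, and that the unique unbounded component of $\rn\setminus h(\partial\Omega)$ contributes nothing because $\deg(h,\Omega,z)=0$ for $z$ outside the compact set $h(\overline\Omega)$; the sum on the right therefore makes sense.

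Set $\delta\ccolonequals\operatorname{dist}(y,g(h(\partial\Omega)))>0$. By Sard's theorem I would pick a smooth $\tilde g$ with $|\tilde g-g|<\delta/3$ on $h(\overline\Omega)$ and with $y$ a regular value of $\tilde g$, so that
$$
\tilde g^{-1}(y)\cap h(\overline\Omega)=\{z_1,\dots,z_k\}
$$
is a finite set. Only the (finitely many) bounded components $\Delta_i$ hitting some $z_j$ will actually matter. Next I would choose a smooth $\tilde h$ uniformly so close to $h$ on $\overline\Omega$ that Lemma~\ref{lemma:TopologicalDegree} yields simultaneously $\deg(g\circ h,\Omega,y)=\deg(\tilde g\circ\tilde h,\Omega,y)$ and $\deg(h,\Omega,z_j)=\deg(\tilde h,\Omega,z_j)$ for every $j$; a further generic perturbation makes each $z_j$ a regular value of $\tilde h$, so each set $\tilde h^{-1}(z_j)\subset\Omega$ is finite.

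In this smooth transversal setting, every $x\in(\tilde g\circ\tilde h)^{-1}(y)\cap\Omega$ satisfies $\tilde h(x)=z_j$ for a unique $j$, and the chain rule gives $J_{\tilde g\circ\tilde h}(x)=J_{\tilde g}(z_j)\,J_{\tilde h}(x)$. Hence
$$
\deg(g\circ h,\Omega,y)=\sum_{j=1}^k\sgn J_{\tilde g}(z_j)\,\deg(\tilde h,\Omega,z_j).
$$
Grouping the $z_j$ according to the component $\Delta_i$ containing them, using $\deg(\tilde h,\Omega,z_j)=\deg(h,\Omega,\Delta_i)$ whenever $z_j\in\Delta_i$, together with $\deg(g,\Delta_i,y)=\sum_{z_j\in\Delta_i}\sgn J_{\tilde g}(z_j)$, yields the claim. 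The main obstacle is the simultaneous approximation step: we must find a single $\tilde h$ that preserves each of the finitely many relevant degrees $\deg(h,\Omega,\Delta_i)$ \emph{and} keeps $\tilde g\circ\tilde h$ close enough to $g\circ h$ on $\partial\Omega$ for Lemma~\ref{lemma:TopologicalDegree} to apply. The finiteness of the relevant index set, forced by regularity of $y$ for $\tilde g$ combined with compactness of $h(\overline\Omega)$, is exactly what makes a single uniform $C^0$-bound on $\tilde h-h$ suffice.
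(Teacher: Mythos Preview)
The paper does not prove this lemma at all: it simply quotes the degree composition formula from \cite[Proposition IV.6.1]{OR} and uses it as a black box. So there is no ``paper's own proof'' to compare against.

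Your sketch is the standard argument and is essentially correct. Two small points worth tightening. First, when you assert that components $\Delta_i$ containing no $z_j$ contribute nothing, you implicitly use that any $\Delta_i$ with $\deg(h,\Omega,\Delta_i)\neq 0$ must satisfy $\Delta_i\subset h(\Omega)$; this is true (a point of $\Delta_i$ outside $h(\overline\Omega)$ would force degree zero on all of $\Delta_i$), and it is what guarantees $\tilde g^{-1}(y)\cap\Delta_i\subset\{z_1,\dots,z_k\}$ for the relevant components. Second, the ``further generic perturbation'' of $\tilde h$ making each fixed $z_j$ a regular value is most cleanly done by replacing $\tilde h$ with $\tilde h+v$ for a small generic constant vector $v$: Sard gives regularity of all $z_j$ for a.e.\ $v$, and smallness of $v$ preserves the finitely many degree identities you need via Lemma~\ref{lemma:TopologicalDegree} and local constancy of the degree. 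With these clarifications the argument goes through.
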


\subsection{Hausdorff measure}
Given $k\ge 0$
we define 
$$
\haus^k(A)=\lim_{\delta\to 0+}\haus^k_{\delta}(A).
$$
where
$$
\haus^k_{\delta}(A)=\inf\Bigl\{\alpha_k\sum_i \Bigl(\tfrac12\diam A_i\Bigr)^k:\ A\subset\bigcup_i A_i,\ \diam A_i\leq \delta\Bigr\}, \qquad 0<\delta\le \infty
$$ 
and 
$$
\alpha_k=\frac{\pi^{k/2}}{\Gamma(1+\frac k2)},
$$
See e.g.\ \cite{Fe}.

\subsection{Degree formula}\label{changevar}
Let $h:\Omega\to\rn$ be a $C^1$ smooth mapping. Then the 
change of variables formula 
\eqn{changeofvariables}
$$
\int_{G} v(h(x))J_h(x)\; dx=\int_{\rn}v(y)\deg(h,G,y)\; dy\ 
$$
holds for each open set 
$G\subset\subset  \Omega$ 
and each measurable $v\ccolon h(\Omega)\to[0,\infty)$.

\subsection{Disintegration}

Let $Q=(0,1)^n$, $\mu\in \M(Q)$ and 
$\nu$ be a nonnegative finite Radon measure on $(0,1)$.
We denote the $k$-dimensional Lebesgue measure by $\lambda_k$. 
We still abbreviate ``$\lambda_k$-a.e.'' as ``a.e.''.
For Lebesgue decomposition of measures we refer to 
\cite[Theorem 1.28]{AFP}.

A system $(\mu_t)_{t\in (0,1)}$, where $\mu_t$ are signed Radon measures on $(0,1)^{n-1}$,
is called a \textit{disintegration} of $\mu$ with respect to $\nu$
if 
 \begin{equation}
  \label{disint}
  \mu(A)=\int_0^1\mu_t(A)\,d\nu(t),
  \qquad A\subset Q \text { Borel.}
 \end{equation}
 Note that this is equivalent to the validity of
 \begin{equation}
  \label{disint1}
  \int_{Q}\ff(y,t)\,d\mu(x,t)=\int_0^1\Bigl(\int_{(0,1)^{n-1}}\ff(y,t)\,d\mu_t(y)\Bigr)\,d\nu(t)
 \end{equation}
for each bounded Borel measurable $\ff\colon Q\to\er$.

\begin{thm}\label{t:disint}
Let $\mu\in \M(Q)$.
Then there exists a disintegration $(\mu_t)_{t\in (0,1)}$ of $\mu$ with respect to
\eqn{hownu}
$$
\nu\colon E\mapsto |\mu|((0,1)^{n-1}\times E), \qquad E\subset (0,1) \text{ Borel}.
$$
Moreover, if $(\mu_t)_t$ and $(\sigma_t)_t$ are disintegrations of $\mu$ with respect to 
$\nu$, then $\mu_t=\sigma_t$ for $\nu$-a.e. $t\in (0,1)$.
\end{thm}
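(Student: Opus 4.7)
The plan is to construct the disintegration via Radon--Nikodym analysis applied to a countable dense family of test functions, with the Riesz representation theorem producing the fibre measures. By the Jordan decomposition $\mu = \mu^+ - \mu^-$ with $\mu^\pm \leq |\mu|$, it suffices to construct disintegrations of the nonnegative parts against the same control measure $\nu$; linearity of \eqref{disint} then assembles a disintegration of $\mu$.

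For each $\ff \in C_0((0,1)^{n-1})$ the signed Borel measure on $(0,1)$ defined by $\eta_{\ff}(E) := \int_{(0,1)^{n-1}\times E} \ff(y)\, d\mu(y,t)$ satisfies $|\eta_{\ff}(E)| \leq \|\ff\|_\infty \, \nu(E)$ by the defining choice \eqref{hownu} of $\nu$, so $\eta_{\ff} \ll \nu$ and the Radon--Nikodym theorem supplies a density $h_{\ff} \in L^\infty(\nu)$ with $|h_{\ff}(t)| \leq \|\ff\|_\infty$ for $\nu$-a.e.\ $t$. Fix a countable $\qe$-linear subspace $\mathcal{S} \subset C_0((0,1)^{n-1})$ that is dense in the sup norm. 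Because $\ff \mapsto \eta_{\ff}$ is $\qe$-linear and $\mathcal{S}$ is countable, there is a single $\nu$-null set $N$ outside of which $h_{\alpha\ff+\beta\psi}(t) = \alpha h_{\ff}(t) + \beta h_{\psi}(t)$ for all $\ff,\psi\in\mathcal{S}$ and $\alpha,\beta\in\qe$, simultaneously with the uniform bound $|h_{\ff}(t)| \leq \|\ff\|_\infty$.

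For $t \in (0,1) \setminus N$ the map $\ff \mapsto h_{\ff}(t)$ is therefore a bounded $\qe$-linear functional on $\mathcal{S}$ and extends uniquely to a bounded linear functional $L_t$ on $C_0((0,1)^{n-1})$ of norm at most $1$; by the Riesz representation theorem $L_t$ is represented by a unique signed Radon measure $\mu_t$ with $L_t(\ff) = \int \ff \, d\mu_t$. Set $\mu_t := 0$ for $t \in N$. Measurability of $t \mapsto \mu_t(A)$ for Borel $A \subset (0,1)^{n-1}$ follows by a monotone class argument from the measurability of $t \mapsto h_{\ff}(t) = \int \ff \, d\mu_t$ for $\ff \in \mathcal{S}$. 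The Radon--Nikodym identity $\int_E h_{\ff}\, d\nu = \eta_{\ff}(E)$ is exactly \eqref{disint1} on product test functions $\ff(y)\psi(t)$ with $\ff \in \mathcal{S}$ and $\psi \in C_0((0,1))$; extending first by density in $\ff$ and $\psi$, then by a monotone class argument, yields \eqref{disint1} for all bounded Borel $\ff$ on $Q$, and hence \eqref{disint}.

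Uniqueness is the easy part. Given two disintegrations $(\mu_t)_t$ and $(\sigma_t)_t$, applying \eqref{disint1} to test functions of the form $\ff(y)\chi_E(t)$ for $\ff \in \mathcal{S}$ and Borel $E \subset (0,1)$ gives $\int \ff\, d\mu_t = \int \ff\, d\sigma_t$ for $\nu$-a.e.\ $t$; intersecting these countably many full-measure sets and invoking density of $\mathcal{S}$ in $C_0((0,1)^{n-1})$ forces $\mu_t = \sigma_t$ as measures for $\nu$-a.e.\ $t$. The main technical obstacle lies in the existence part: one must pass from the individual $\ff$-by-$\ff$ Radon--Nikodym densities to a genuine measure-valued map $t \mapsto \mu_t$ defined for $\nu$-a.e.\ $t$. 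The choice of $\nu$ in \eqref{hownu} as the projection of $|\mu|$ is exactly what produces the $L^\infty$ bound on the densities $h_{\ff}$, and it is this bound, together with the countable-family trick, that makes the Riesz-representation extension available.
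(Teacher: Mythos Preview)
Your argument is correct. The paper does not actually prove this theorem: its entire proof consists of the single line ``See e.g.\ \cite[Theorem 2.28]{AFP}'', deferring to the general disintegration theorem in Ambrosio--Fusco--Pallara. What you have written is essentially the standard proof of that result---Radon--Nikodym densities against the pushforward $\nu$ for a countable dense family of test functions, a common $\nu$-null exceptional set on which linearity and the norm bound hold, Riesz representation to produce the fibre measures $\mu_t$, and a monotone-class extension to recover \eqref{disint}. The uniqueness argument via the countable dense family is likewise the standard one. So in substance you have supplied, in self-contained form, exactly the argument that the cited reference provides; there is nothing to compare beyond noting that the paper chose to cite rather than reproduce it.
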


\begin{proof} See e.g.\ \cite[Theorem  2.28]{AFP}.
\end{proof}

\begin{corollary}\label{c:disint}
Let $\mu\in\M(Q)$. Let  $(\mu_t)_t$ and $(\sigma_t)_t$ are disintegrations of $\mu$ with respect to 
the Lebesgue measure $\lambda_1$ on $(0,1)$. Then $\mu_t=\sigma_t$ for a.e. $t\in (0,1)$.
\end{corollary}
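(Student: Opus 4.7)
The plan is to reduce Corollary \ref{c:disint} to the standard uniqueness theorem for finite Borel measures, which are determined by their values on a countable generating $\pi$-system. First I fix once and for all a countable family $\mathcal{P}=\{B_k\}_{k\in\en}$ of Borel subsets of $(0,1)^{n-1}$ which contains $(0,1)^{n-1}$, is closed under finite intersections, and generates the Borel $\sigma$-algebra; for instance the half-open dyadic subcubes of $(0,1)^{n-1}$ together with $(0,1)^{n-1}$ itself.

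Next, for each fixed $k\in\en$ and each Borel set $E\subset(0,1)$, I apply the defining identity \eqref{disint} to the product $A=B_k\times E$ with respect to both disintegrations, obtaining
\[
\int_E \mu_t(B_k)\,d\lambda_1(t)=\mu(B_k\times E)=\int_E \sigma_t(B_k)\,d\lambda_1(t).
\]
Since $E$ ranges over all Borel subsets of $(0,1)$, the integrable functions $t\mapsto\mu_t(B_k)$ and $t\mapsto\sigma_t(B_k)$ must agree off some $\lambda_1$-null set $N_k\subset(0,1)$. I then set $N:=\bigcup_{k\in\en} N_k$, which is still $\lambda_1$-null because the union is countable.

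For every $t\in(0,1)\setminus N$ the two finite signed Radon measures $\mu_t$ and $\sigma_t$ on $(0,1)^{n-1}$ coincide on the $\pi$-system $\mathcal{P}$; in particular they share the same total mass, as $(0,1)^{n-1}\in\mathcal{P}$. Consequently, the family
\[
\bigl\{A\subset(0,1)^{n-1}\text{ Borel}\colon \mu_t(A)=\sigma_t(A)\bigr\}
\]
is a $\lambda$-system containing $\mathcal{P}$, and by Dynkin's $\pi$-$\lambda$ theorem it exhausts the Borel $\sigma$-algebra. Hence $\mu_t=\sigma_t$ as signed Radon measures for every $t\in(0,1)\setminus N$, which is the desired conclusion.

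The main obstacle to circumvent is the order of quantifiers: for an individual test set $B$, the exceptional $\lambda_1$-null set depends on $B$, and an uncountable union of null sets need not be null. The remedy is precisely the passage to a countable determining class, a device that also underlies the uniqueness part of Theorem \ref{t:disint} as proved in \cite{AFP}.
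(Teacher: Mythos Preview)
Your argument is correct and is actually more direct than the paper's. The paper reduces to the uniqueness part of Theorem \ref{t:disint} by a change-of-measure trick: it passes from the $\lambda_1$-disintegrations $(\mu_t)$, $(\sigma_t)$ to the $\nu$-disintegrations $(a(t)\mu_t)$, $(a(t)\sigma_t)$ via the Radon--Nikodym density $a=d\rho/d\nu$ of the absolutely continuous part $\rho$ of $\lambda_1$ with respect to $\nu$, invokes the cited uniqueness, and then must still treat separately the set $E$ on which $\lambda_1$ is singular with respect to $\nu$ by showing that $\mu_t$ and $\sigma_t$ both vanish there, using a countable dense family of cubes. Your route bypasses the detour through $\nu$ entirely: you test the disintegration identity on products $B_k\times E$, get $\mu_t(B_k)=\sigma_t(B_k)$ a.e.\ for each $B_k$ in a countable generating $\pi$-system, take a countable union of null sets, and finish with the $\pi$-$\lambda$ theorem. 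This is self-contained and does not invoke Theorem \ref{t:disint} at all; the paper's approach reuses that theorem but ends up doing essentially the same countable-family argument anyway on the residual set $E$. One small point worth making explicit in your write-up is that the $\lambda$-system step (closure under complements) uses that $\mu_t$ and $\sigma_t$ are \emph{finite} signed measures for a.e.\ $t$; this follows since $t\mapsto\mu_t((0,1)^{n-1})$ is $\lambda_1$-integrable by the disintegration identity, hence finite a.e., and a signed measure with finite total value is finite.
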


\begin{proof} Let $\nu$ be as in \eqref{hownu},
$\rho$ be the absolutely continuous part of $\lambda_1$ with respect to
$\nu$ and $a$ be the Radon-Nikodym derivative of $\rho$ with respect to $\nu$.
Then there is a Borel set $E\subset (0,1)$ such that $\nu(E)=0$
and $\rho=\lambda_1$ on $(0,1)\setminus E$. Then $(a(t)\mu_t)_t$ and $(a(t)\sigma_t)_t$
are disintegrations of $\mu$ with respect to $\nu$. By the uniqueness part of 
Theorem \ref{t:disint} we have $\mu_t=\sigma_t$ $\nu$-a.e.\ in $(0,1)$,
and by the absolute continuity, $\mu_t=\sigma_t$ $\rho$-a.e.\ in $(0,1)$,
which means  $\mu_t=\sigma_t$ a.e.\ in $(0,1)\setminus E$.

For each cube $M\subset (0,1)^{n-1}$ and Borel set $E'\subset E$ we have
$$
\Bigl|\int_{E'}\mu_t(M)\,dt\Bigr|=|\mu(M\times E')|\le \nu(E')=0.
$$
It follows that $\mu_t(M)=0$ for a.e.\ $t\in E$. 
The same argument shows that 
$\sigma_t(M)=0$ for a.e.\ $t\in E$. We find a joint set
$Z\subset E$ of $\lambda_1$-measure $0$ such that $\mu_t(M)=0=\sigma_t(M)$ for $t\in E\setminus Z$
and each cube $M$ from a dense family of cubes in $[0,1]^{n-1}$. It follows that
$\mu_t=\sigma_t$ a.e.\ also in $E$.
\end{proof}

\begin{remark}\label{r:disint}
Let $\mu\in\M(Q)$, $\nu$ be as in \eqref{hownu},
$(\mu_t)_t$ be a disintegration of $\mu$ with respect to $\nu$
and $(|\mu|_t)_t$ be a disintegration of $|\mu|$ with respect to $\nu$.
Then $|\mu|_t=|\mu_t|$ for $\nu$-a.e.\ $t\in (0,1)$.
Indeed, consider
$$
\sigma_t(M)=\int_{M}\theta(y,t)\,d|\mu|_t,\qquad M\subset(0,1)^{n-1} \text{ Borel,}
$$
where $\theta=\frac{d\mu}{|d\mu|}$. Then the claim follows from the uniqueness part of 
Theorem \ref{t:disint}.
Similar observation holds
for the positive and negative parts of $\mu$. 

If follows that $|\mu_t|((0,1)^{n-1})=1$ for a.e.\ $t\in (0,1)$.
\end{remark}

\begin{lemma}\label{l:absdisint} 
Let $Q=(0,1)^n$ and $\mu\in \M(Q)$.
Let $(\mu_t)_t$ be a disintegration of $\mu$ with respect to $\lambda_1$.
Let $\mu_a$ be the absolutely continuous part of $\mu$ with respect to $\lambda_n$ and 
$(\mu_t)_a$ denote the absolutely continuous parts of $\mu_t$, $t\in (0,1)$,
with respect to $\lambda_{n-1}$.
Then $((\mu_t)_a)_t$ is a disintegration of $\mu_a$ with respect to $\lambda_1$.
\end{lemma}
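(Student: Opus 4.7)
The plan is to exhibit an explicit disintegration of $\mu_a$ with respect to $\lambda_1$ and identify it with $((\mu_t)_a)_t$ via the uniqueness statement in Corollary~\ref{c:disint}. Write $\mu_a = g\,\lambda_n$ for some $g \in L^1(Q)$ by Radon--Nikodym. By Fubini, $g(\cdot,t) \in L^1((0,1)^{n-1})$ for $\lambda_1$-a.e.\ $t$, so the measure $\tilde\mu_t := g(\cdot,t)\,\lambda_{n-1}$ is absolutely continuous with respect to $\lambda_{n-1}$, and a second application of Fubini,
$$
\mu_a(A) = \int_A g\,d\lambda_n = \int_0^1\!\int_{A_t} g(y,t)\,dy\,dt = \int_0^1 \tilde\mu_t(A_t)\,dt,
$$
shows that $(\tilde\mu_t)_t$ is a disintegration of $\mu_a$ with respect to $\lambda_1$. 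By linearity of the disintegration identity, $(\mu_t - \tilde\mu_t)_t$ is then a disintegration of $\mu_s := \mu - \mu_a$ with respect to $\lambda_1$.

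The crucial step is the pointwise identification
$$
\mu_t - \tilde\mu_t = \psi(t)\,\eta_t \qquad \text{for $\lambda_1$-a.e.\ } t,
$$
where $(\eta_t)_t$ is the disintegration of $\mu_s$ provided by Theorem~\ref{t:disint} with respect to $\nu_s\colon E\mapsto |\mu_s|((0,1)^{n-1}\times E)$ and $\psi := d\nu_s/d\lambda_1$. To obtain it, the standard estimate $|\mu_s|(A) \le \int_0^1 |\mu_t - \tilde\mu_t|(A_t)\,dt$ (which follows by comparing an arbitrary Borel partition of $A$ with the disintegration formula for $\mu_s$) implies $\nu_s \ll \lambda_1$, so $\psi \in L^1(\lambda_1)$. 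Consequently $(\psi(t)\eta_t)_t$ is also a disintegration of $\mu_s$ with respect to $\lambda_1$, and Corollary~\ref{c:disint} yields the claimed identity. Remark~\ref{r:disint} furthermore supplies that $(|\eta_t|)_t$ is a disintegration of $|\mu_s|$ with respect to $\nu_s$.

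Finally, the singularity $\mu_s \perp \lambda_n$ gives a Borel set $N\subset Q$ with $\lambda_n(N)=0$ and $|\mu_s|(Q\setminus N)=0$. The disintegration of $|\mu_s|$ then yields $|\eta_t|((0,1)^{n-1}\setminus N_t)=0$ for $\nu_s$-a.e.\ $t$, so $\eta_t$ is concentrated on $N_t$. By Fubini $\lambda_{n-1}(N_t)=0$ for $\lambda_1$-a.e.\ $t$, hence $\eta_t \perp \lambda_{n-1}$ at such $t$; on the (possibly $\lambda_1$-positive but $\nu_s$-null) exceptional set the density $\psi$ vanishes $\lambda_1$-a.e., so in any case $\psi(t)\eta_t \perp \lambda_{n-1}$ for $\lambda_1$-a.e.\ $t$. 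Combined with $\tilde\mu_t \ll \lambda_{n-1}$, the uniqueness of the Lebesgue decomposition of $\mu_t$ gives $\tilde\mu_t = (\mu_t)_a$ for $\lambda_1$-a.e.\ $t$, so $((\mu_t)_a)_t$ disintegrates $\mu_a$ as required. The main obstacle is precisely the switch between base measures: Theorem~\ref{t:disint} and Remark~\ref{r:disint} yield the necessary structural information about total variations only for the disintegration with respect to $\nu_s$, while the assertion concerns $\lambda_1$; the Radon--Nikodym factorization $\nu_s = \psi\,\lambda_1$ is what links the two viewpoints.
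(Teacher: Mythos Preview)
Your proof is correct and shares the paper's core idea: define $\tilde\mu_t=g(\cdot,t)\lambda_{n-1}$ via Fubini, verify that $(\tilde\mu_t)_t$ disintegrates $\mu_a$, show that the remainder $\mu_t-\tilde\mu_t$ is singular with respect to $\lambda_{n-1}$ for a.e.\ $t$, and conclude by uniqueness of the Lebesgue decomposition together with Corollary~\ref{c:disint}.

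The difference is in how the singularity of the remainder is established. The paper does this in one stroke: taking the Borel null set $E$ carrying $\mu_s$, it replaces $E$ by $\tilde E=\{(y,t)\in E:\lambda_{n-1}(E_t)=0\}$ (which still carries $\mu_s$ by Fubini) and sets $\sigma_t=(\tilde\mu_t)+(\mu_t\!\restriction\!\tilde E_t)$. Since $(\mu_t)_t$ disintegrates $\mu$, one checks directly that $(\sigma_t)_t$ also disintegrates $\mu$ with respect to $\lambda_1$; Corollary~\ref{c:disint} then gives $\sigma_t=\mu_t$ a.e., and by construction the two summands are the Lebesgue decomposition of $\mu_t$. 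Your route instead passes through the disintegration $(\eta_t)_t$ of $\mu_s$ with respect to $\nu_s$, proves $\nu_s\ll\lambda_1$, converts via the density $\psi$, and then invokes Remark~\ref{r:disint} to transfer the concentration of $|\mu_s|$ on $N$ to the slices. This works, but the detour through $\nu_s$ and the base-measure change is not needed: restricting $\mu_t$ itself to the slice $\tilde E_t$ of the singular carrier already produces the singular part directly.
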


\begin{proof} 
Let $\mu_s$ be the singular part of $\mu$ and $g$
be a Borel-measurable representative of 
the Radon-Nikodym derivative of $\mu_a$ with respect to $\lambda_n$.
Then there is a Borel set $E\subset Q$ of measure zero such that 
\eqn{howE}
$$
\mu_s(A)=\mu (E\cap A),\qquad A\subset Q\text{ Borel.}
$$
By the Fubini theorem, the set 
$$
E_t = \{y\in (0,1)^{n-1}\colon (y,t)\in E\}
$$ 
has $(n-1)$-dimensional measure zero for almost every $t\in (0,1)$. 
Set 
$$
\tilde E=\{(y,t)\in E\colon \lambda_{n-1}(E_t)=0\}.
$$
Then $\tilde E$ can be used in place of $E$ in \eqref{howE}.
Set 
$$
\sigma_t=(\sigma_t)_a+(\sigma_t)_s,
$$
where for each Borel set $M\subset(0,1)^{n-1}$ we define
$$
\aligned
(\sigma_t)_s(M)&=\mu_t(M\cap \tilde E_t),\\
(\sigma_t)_a(M)&=\int_{M}g(y,t)\,dy.
\endaligned
$$
Then for each $t\in (0,1)$, $(\sigma_t)_a$ is absolutely continuous with respect to 
$\lambda_{n-1}$ and $(\sigma_t)_s$ is singular with respect to 
$\lambda_{n-1}$. It is easily seen that $(\sigma_t)_t$ is a disintegration of $\mu$ 
with respect to $\lambda_1$ and thus 
by Corollary \ref{c:disint}, $\sigma_t=\mu_t$ for a.e. $t\in (0,1)$. 
It follows that  $((\mu_t)_a)_t$ is a disintegration of $\mu_a$ with respect to 
the Lebesgue measure on $(0,1)$.

\end{proof}


\subsection{Lebesgue area}
\label{sec:LebArea}
Let $\Omega\subset \er^2$.
If $g\ccolon\Omega\rightarrow \er^3$ is a piecewise linear continuous map, 
we define the \emph{Lebesgue area} of $g$ by 
$$
L(g)=\sum_{T\in \Delta} \haus^2(f(T)), 
$$
where $\Delta$ is any triangulation of $\Omega$ for which  $g$ is linear in every triangle $T\in \Delta.$
For a general continuous map $g\ccolon \Omega\rightarrow \er^3$ we set 
$$
L(g)=\lim_{\epsilon\rightarrow 0} \inf\{L(h)\ccolon\,  
h \text{ piecewise linear, } \|h-g\|_\infty<\epsilon\}.
$$

Assume now that $\Omega\subset \er^3$ and $f:\Omega\rightarrow \er^3$ is a continuous mapping. We say that $f$ satisfies the \emph{finite Lebesgue area condition} if for almost every $t\in \er$ the mappings $f(t,\cdot,\cdot),\,f(\cdot,t,\cdot)$ and $f(\cdot,\cdot,t)$ have finite Lebesgue area.
\section{Distributional Jacobian}\label{s:dim2}

Let $G\subset\er^2$ be open and $g\in \BVd(G,\er^2)$ be continuous.
Then $\adj Dg$ is the matrix-valued measure
$$
\adj Dg = 
\begin{pmatrix}
D_2g_2,&-D_2g_1\\
-D_1g_2,&D_1g_1
\end{pmatrix}.
$$
The distributional Jacobian of 
$g$ 
is the limit 
$$
\Det Dg=\lim_{k\to\infty}\det(\nabla g_k)
$$
in distributions,
where $g_k\to g$ are standard mollifications of $g$. 
We use also the symbol $\djac_g$ for $\Det Dg$.
A routine approximation gives
\eqn{distrjac}
$$
\langle\djac_g,\varphi\rangle=\int_G \varphi(x) J_g(x)\; dx
$$ 
if $g$ is smooth enough, e.g.\ $g\in W^{1,2}(G)$. Under standing assumptions,
\eqref{distrjac} can fail but
we can integrate by parts to obtain
\eqn{genadj}
$$
\langle \djac_g,\ff\rangle = -\sum_{i,j=1}^2
\langle\adj_{ij} Dg,\;(\Phi_j\circ g)\, D_i\ff \rangle
,\qquad \ff\in \D(G),
$$
if $\Phi\ccolon\er^2\to\er^2$ is a $C^1$-mapping satisfying $\dive\Phi=1$ on a neighborhood of
$g(\overline\Omega)$.
Indeed, for smooth function we can refer to 
\cite{MTY} to the formula
\eqn{neplati}
$$
\sum_{i,j=1}^2D_i\bigl(\adj_{ij}Dg \;\Phi_j\circ g\bigr)=(\dive\Phi)\circ g\;\det Dg
$$
and passing to the limit in duality between measures and continuous functions we obtain the general case.
(Note that, in our generality, the passage to the limit on the right is not guaranteed unless $\dive\Phi=1$.)
In particular, the choice $\Phi(y)=y_1$ yields
\eqn{defdistr}
$$
\aligned
\langle \djac_g,\ \varphi\rangle 
=\langle D_1g_2,\;g_1D_2\ff\rangle
-\langle D_2g_2,\;g_1D_1\ff\rangle,
\qquad\varphi \in \D(G).
\endaligned
$$

\subsection{Two-dimensional degree and the Distributional Jacobian}\label{ss:distrjac}

\begin{lemma}\label{l:W}
Let $W\subset\er^2$ be a bounded open set and $g\in C(\overline W,\er^2)\cap BV(W,\er^2)$. Let $\eta\in \D(\er^2)$ have support in $\er^2\setminus g(\partial W)$. 
Let 
$\Phi\ccolon\er^2\to\er^2$ be a $C^1$ mapping such that 
$$
\dive \Phi=\eta
$$
and $\ff\in \D(W)$ be such that 
$\overline {\{\ff\ne 1\}}\cap\overline{ \{\eta\circ g\ne 0\}}=\emptyset$. 
Then 
$$
-\sum_{i,j=1}^2
\langle\adj_{ij} Dg,\;(\Phi_j\circ g)\, D_i\ff \rangle
=\int_{\er^2}\eta(y)\deg(g,W,y).
$$
\end{lemma}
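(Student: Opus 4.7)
The plan is to establish the identity first for smooth $g$ by the pointwise identity \eqref{neplati}, and then to deduce the general $BV$ case by mollification together with the stability property of the topological degree from Lemma \ref{lemma:TopologicalDegree}.

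For the smooth case, \eqref{neplati} applied with $\dive\Phi=\eta$ yields
$$
\sum_{i,j=1}^2 D_i\bigl(\adj_{ij}Dg\,(\Phi_j\circ g)\bigr)=(\eta\circ g)\,J_g.
$$
Multiplying by $\varphi\in\D(W)$ and integrating by parts gives
$$
-\sum_{i,j=1}^2\int_W \adj_{ij}Dg\,(\Phi_j\circ g)\,D_i\varphi\,dx
=\int_W \varphi(x)\,\eta(g(x))\,J_g(x)\,dx.
$$
The assumption $\overline{\{\varphi\ne 1\}}\cap\overline{\{\eta\circ g\ne 0\}}=\emptyset$ lets me delete the factor $\varphi$ on the right, and the change of variables formula \eqref{changeofvariables} with $v=\eta$ (split into positive and negative parts) identifies the resulting integral as $\int_{\er^2}\eta(y)\deg(g,W,y)\,dy$. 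Observe that such cut-offs $\varphi$ do exist: since $\supp\eta\cap g(\partial W)=\emptyset$ and $g$ is continuous, the set $\overline{\{\eta\circ g\ne 0\}}$ is a compact subset of $W$.

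For general $g\in C(\overline W,\er^2)\cap BV(W,\er^2)$, I extend $g$ continuously to $\er^2$ via the Tietze theorem and mollify to obtain smooth $g_k\to g$ uniformly on $\overline W$; on any compact $K\subset W$ the components of $Dg_k$ converge weakly$^*$ as Radon measures to those of $Dg$, with $|Dg_k|(K)$ uniformly bounded. Applying the smooth identity to $g_k$ and the same test function $\varphi$ yields the LHS--RHS equality with $g$ replaced by $g_k$. On the right, since $\dist\bigl(g(\overline W\setminus\{\varphi=1\}),\,\supp\eta\bigr)>0$, uniform convergence forces $\eta\circ g_k$ to vanish outside $\{\varphi=1\}$ for $k$ large, so the factor $\varphi$ may again be removed; \eqref{changeofvariables} then rewrites the right side as $\int_{\er^2}\eta(y)\deg(g_k,W,y)\,dy$, and Lemma \ref{lemma:TopologicalDegree} combined with $\dist(\supp\eta,g(\partial W))>0$ and uniform convergence on $\partial W$ gives $\deg(g_k,W,y)=\deg(g,W,y)$ for every $y\in\supp\eta$ once $k$ is large, so the right side converges to the asserted integral.

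The main technical point is passing to the limit on the left, where the measures $\adj_{ij}Dg_k$ converge only weakly$^*$ while the test functions $(\Phi_j\circ g_k)D_i\varphi$ themselves depend on $k$. I split
$$
\int f_k\,d\mu_k-\int f\,d\mu=\int (f_k-f)\,d\mu_k+\int f\,d(\mu_k-\mu),
$$
with $\mu_k=\adj_{ij}Dg_k$, $\mu=\adj_{ij}Dg$, $f_k=(\Phi_j\circ g_k)D_i\varphi$ and $f=(\Phi_j\circ g)D_i\varphi$. The first term is controlled by $\|f_k-f\|_\infty\,|\mu_k|(\supp D_i\varphi)$, which vanishes from the uniform total variation bound together with $g_k\to g$ uniformly on $\supp D_i\varphi\Subset W$ and $\Phi\in C^1$; the second vanishes by weak$^*$ convergence tested against the fixed continuous compactly supported function $f$. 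This identifies the limit of the left side with $-\sum_{i,j}\langle\adj_{ij}Dg,\,(\Phi_j\circ g)D_i\varphi\rangle$, completing the proof.
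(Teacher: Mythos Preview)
Your proof is correct and follows essentially the same approach as the paper: establish the identity for smooth $g$ via the pointwise identity \eqref{neplati} and the degree formula \eqref{changeofvariables}, then pass to the limit by mollification, using uniform convergence of $\Phi\circ g_k$ together with weak$^*$ convergence of $Dg_k$ on the left and Lemma~\ref{lemma:TopologicalDegree} on the right. Your write-up is in fact a bit more explicit than the paper's (the Tietze extension and the splitting $\int f_k\,d\mu_k-\int f\,d\mu$), but there is no substantive difference in the argument.
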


\begin{proof}
If $g$ is smooth, we have by analogy of \eqref{distrjac}, \eqref{defdistr} and \eqref{changeofvariables}
\eqn{smoothdegree}
$$
\aligned
&
-\sum_{i,j=1}^2
\langle\adj_{ij} Dg,\;(\Phi_j\circ g)\, D_i\ff \rangle
\\&\quad
=-\int_{W}(\Phi_2\circ g)\det(\nabla g_1,\nabla \ff)\,dx
+\int_{W}(\Phi_1\circ g)\det(\nabla g_2,\nabla \ff)\,dx
\\&\quad
=\int_{W}(D_2\Phi_2\circ g)\det(\nabla g_1,\nabla g_2)\ff\,dx
-\int_{W}(D_1\Phi_1\circ g)\det(\nabla g_2,\nabla g_1)\ff\,dx
\\&
\quad
=\int_{W}
\dive \Phi(g(x))\,J_g(x)\,\ff(x)\,dx
=\int_{W}
\eta(g(x))\,J_g(x)\,dx
\\&\quad
=\int_{\er^2}\eta(y)\,\deg(g,W,y)\,dy.
\endaligned
$$
In the general case we approximate $g$ by standard mollifications
$g^{(j)}$.
The passage to the limit on the left of \eqref{smoothdegree}
is easy, as $\Phi\circ g^{(j)}\to
\Phi\circ g$ uniformly and $Dg^{(j)}\to Dg$ weak* in measures.
The passage on the right follows from the fact that $g^{(j)}\to g$
uniformly and $\eta$ has compact support in $\er^2\setminus g(\partial W)$ (see Lemma \ref{lemma:TopologicalDegree}).
\end{proof}

\begin{corollary}\label{c:Nest}
Let $W\subset\er^2$ be a bounded open set and $g\in C(\overline W,\er^2)\cap BV(W,\er^2)$. Let $\eta\in \D(\er^2)$ have support in $\er^2\setminus g(\partial W)$. 
Let 
$\Phi\ccolon\er^2\to\er^2$ be a $C^1$ mapping such that 
$$
\dive \Phi=\eta
$$
and $\ff\in \D(W)$ be such that 
$\overline {\{\ff\ne 1\}}\cap\overline{ \{\eta\circ g\ne 0\}}=\emptyset$. 
Then 
$$
\Bigl|\sum_{i,j=1}^2
\langle\adj_{ij} Dg,\;(\Phi_j\circ g)\, D_i\ff \rangle\Bigr|
\le
||\eta||_\infty
\int_{\er^2}
|\deg(g,W,y)|\,dy.
$$
\end{corollary}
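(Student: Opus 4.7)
The plan is to apply Lemma \ref{l:W} directly, since the hypotheses of Corollary \ref{c:Nest} are literally the same. That lemma supplies the identity
$$
-\sum_{i,j=1}^2 \langle\adj_{ij} Dg,\;(\Phi_j\circ g)\, D_i\ff \rangle
= \int_{\er^2}\eta(y)\,\deg(g,W,y)\,dy.
$$
Taking absolute values on both sides and using the pointwise estimate $|\eta(y)|\le \|\eta\|_\infty$ on the integrand yields
$$
\Bigl|\sum_{i,j=1}^2 \langle\adj_{ij} Dg,\;(\Phi_j\circ g)\, D_i\ff \rangle\Bigr|
\le \|\eta\|_\infty\int_{\er^2}|\deg(g,W,y)|\,dy,
$$
which is precisely the claimed inequality.

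There is no real obstacle: the only thing to verify is that the manipulation is legal as an inequality. The right-hand side is always well defined in $[0,+\infty]$: $\deg(g,W,\cdot)$ is integer-valued and supported inside the compact set $g(\overline W)$, so it is locally integrable, and if its integral happens to be $+\infty$ the asserted bound is vacuous. The integrand $\eta(y)\deg(g,W,y)$ on the left is integrable because $\eta$ has compact support and $\deg(g,W,\cdot)$ is bounded on any compact set disjoint from $g(\partial W)$, so the identity from Lemma \ref{l:W} applies without further justification. Hence the corollary follows immediately.
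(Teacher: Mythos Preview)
Your proof is correct and matches the paper's intended approach: the corollary is stated immediately after Lemma~\ref{l:W} with no separate proof, precisely because it follows at once by taking absolute values in the identity of that lemma and bounding $|\eta|$ by $\|\eta\|_\infty$.
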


\begin{lemma} 
\label{l:atx1}
Let  $Q=Q(\bar x,r)$ be a square in $\er^2$ and $0<\rho_k<r$,
$\rho_k\nearrow r$.
Let $g\in BV(Q,\er^2)$ be a continuous $BV$ mapping.
Let $\eta,\eta_k\in \D(\er^2)$.
Suppose that $\eta=1$ on a neighborhood of $g(\overline W)$ and 
$\eta_k\in \D(\er^2)$ have support in $\er^2\setminus g(\overline Q\setminus Q(\bar x,\rho_k))$. Let 
$\Phi,\,\Phi^{(k)}\ccolon\er^2\to\er^2$ be  $C^1$ mappings such that 
$$
\dive \Phi^{(k)}=\eta_k,\quad \dive \Phi=\eta
$$
and $\ff_k\in \D(Q)$ be such that $\ff_k=1$ on $Q(\bar x,\rho_k)$. 
Suppose that  
$\J_g\in \M(Q)$  
and
\eqn{lim1}
$$
\Phi_1^{(k)}\to \Phi
\text{ uniformly on }g(\overline Q),
$$
\eqn{lim2}
$$
|\nabla\ff_k|\le \frac{C}{r-\rho_k}
$$
and
\eqn{lim3}
$$
\limsup_{k\to\infty}\frac{|Dg|(Q\setminus Q(\bar x,\rho_k))}{r-\rho_k}<\infty.
$$
Then
$$
\aligned
-\lim_{k\to\infty} &
\Bigl(
\sum_{i,j=1}^2
\langle\adj_{ij} Dg,\;(\Phi_j^{(k)}\circ g)\, D_i\ff \rangle
\Bigr)
=\J_g(Q).
\endaligned
$$
\end{lemma}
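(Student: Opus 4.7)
\emph{Proof plan.} My plan is to invoke the master identity \eqref{genadj} with the fixed vector field $\Phi$ and the test function $\ff_k$, and then absorb the replacement of $\Phi$ by $\Phi^{(k)}$ into an error term controlled by the hypotheses \eqref{lim1}--\eqref{lim3}. Since $\dive\Phi=\eta$ and $\eta\equiv 1$ on a neighborhood of $g(\overline Q)$, one has $\dive\Phi=1$ on that neighborhood, so \eqref{genadj} legitimately applies to $\Phi$ and delivers, for every $k$,
$$
\langle \J_g,\ff_k\rangle \;=\; -\sum_{i,j=1}^2\bigl\langle\adj_{ij}Dg,\;(\Phi_j\circ g)\,D_i\ff_k\bigr\rangle.
$$
Writing $\Phi^{(k)}=\Phi+(\Phi^{(k)}-\Phi)$ then decomposes the quantity of interest as $\langle\J_g,\ff_k\rangle+E_k$, where
$$
E_k\;=\;-\sum_{i,j=1}^2\bigl\langle\adj_{ij}Dg,\;((\Phi^{(k)}_j-\Phi_j)\circ g)\,D_i\ff_k\bigr\rangle,
$$
so the claim is reduced to showing $\langle\J_g,\ff_k\rangle\to \J_g(Q)$ and $E_k\to 0$.

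For the main term, assumption \eqref{lim2} together with $\ff_k\equiv 1$ on $Q(\bar x,\rho_k)$ and $\supp\ff_k\subset Q$ yields the uniform bound $\|\ff_k\|_\infty\le 1+C$ (any point in $Q\setminus Q(\bar x,\rho_k)$ sits at distance at most $r-\rho_k$ from the set $\{\ff_k=1\}$, and the Lipschitz constant is $C/(r-\rho_k)$). Because $\rho_k\nearrow r$, $\ff_k\to\chi_Q$ pointwise on $\er^2$, and since $\J_g\in\M(Q)$, dominated convergence gives $\langle\J_g,\ff_k\rangle\to \J_g(Q)$. For the error term, in dimension two the entries of $\adj Dg$ are those of $Dg$ up to sign, hence $|\adj Dg|\le c\,|Dg|$ as measures; combining this with the fact that $D_i\ff_k$ is supported in $Q\setminus Q(\bar x,\rho_k)$ with $|D_i\ff_k|\le C/(r-\rho_k)$, and setting $\varepsilon_k:=\|\Phi^{(k)}-\Phi\|_{L^\infty(g(\overline Q))}\to 0$ by \eqref{lim1}, one estimates
$$
|E_k|\;\le\;4cC\,\varepsilon_k\cdot\frac{|Dg|(Q\setminus Q(\bar x,\rho_k))}{r-\rho_k},
$$
which tends to zero because the second factor remains bounded by \eqref{lim3}. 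Combining the two limits yields the asserted identity.

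The only genuinely delicate point is securing the applicability of \eqref{genadj} to $\Phi$, which is precisely what the hypothesis $\eta=1$ near $g(\overline Q)$ is designed for: it upgrades $\dive\Phi=\eta$ to $\dive\Phi=1$ on a neighborhood of the image. Everything else is a clean additive decomposition matched term-by-term to the three quantitative estimates, whose product structure in the bound on $E_k$ shows them to be exactly what is needed to close the limit. The ancillary hypotheses $\dive\Phi^{(k)}=\eta_k$ and the support condition on $\eta_k$ do not enter this direct argument; they would become relevant if one instead routed the proof through Lemma~\ref{l:W}, rewriting the quantity under investigation as $\int\eta_k\,\deg(g,Q,\cdot)\,dy$ and comparing this to $\J_g(Q)$ via the two-dimensional degree formula.
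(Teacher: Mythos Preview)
Your proof is correct and follows essentially the same route as the paper's own argument: invoke \eqref{genadj} with the fixed field $\Phi$ (legitimate because $\dive\Phi=\eta=1$ near $g(\overline Q)$) to identify the expression with $\Phi$ in place of $\Phi^{(k)}$ as $\langle\J_g,\ff_k\rangle$, then split into the main term $\langle\J_g,\ff_k\rangle\to\J_g(Q)$ and the error $E_k$ controlled via \eqref{lim1}--\eqref{lim3}. The paper's proof is terser but structurally identical; your added detail on the uniform bound for $\ff_k$ and the explicit product estimate for $E_k$ simply fleshes out what the paper leaves to the reader.
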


\begin{proof}
Taking into account that (see \eqref{genadj})
$$
-\sum_{i,j=1}^2
\langle\adj_{ij} Dg,\;(\Phi_j\circ g)\, D_i\ff\rangle =\langle \J_g,\ff_j\rangle,
$$
we have
$$
\aligned
&\Bigl|-\sum_{i,j=1}^2
\langle\adj_{ij} Dg,\;(\Phi_j^{(k)}\circ g)\, D_i\ff \rangle 
-\J_g(Q)\Bigr|
\\&\quad
\le 
\Bigl|\sum_{i,j=1}^2
\langle\adj_{ij} Dg,\;(\Phi_j^{(k)}\circ g-\Phi_j\circ g)\, D_i\ff \rangle\Bigr|
\\&\quad
+ 
\bigl|\langle \J_g,\ff_j\rangle-\J_g(Q)\bigr|\to 0.
\endaligned
$$
The second term is easy, for the first on we use \eqref{lim1}--\eqref{lim3}.
\end{proof}

\begin{lemma}\label{l:Lp}
Let 
\eqn{howK}
$$
L(x)=-\frac1{2\pi}\log|x|,\quad K(x)=-\frac{1}{2\pi}\,\frac{x}{|x|^2},\qquad x\in \er^2\setminus\{0\}.
$$
Then 
$$
\dive (K*\psi) = \psi,\qquad \psi\in \D(\rn) \text{ supported in }B(0,R)
$$
and
\eqn{inftyest}
$$
|K*\psi(x)|\le C R^{1/2}\|\psi\|_{L^3(B(0,R))},\qquad x\in\er^2.
$$
\end{lemma}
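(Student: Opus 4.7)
\medskip

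\textbf{Proof proposal.} The first identity rests on the observation that $L$ is, up to a sign, the logarithmic fundamental solution of the Laplacian in $\er^2$, and that $K=\nabla L$. I would first verify distributionally that $\dive K$ equals a Dirac mass at the origin. For $\varphi\in\D(\er^2)$, write
$$
\langle\dive K,\varphi\rangle=-\int_{\er^2}K(x)\cdot\nabla\varphi(x)\,dx
$$
and compute the right-hand side by excising a disc $B(0,\eps)$ around the singularity, where $K$ is smooth and classical integration by parts applies. Since $\dive K=0$ pointwise off the origin, only the boundary term on $\partial B(0,\eps)$ survives; using $K(x)=-x/(2\pi|x|^2)$ one checks that $\int_{\partial B(0,\eps)}K\cdot\nu\,dS=-1$, so the sign convention in the statement produces $\varphi(0)$ in the limit $\eps\to 0$. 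Once $\dive K=\delta_0$ is established, the convolution identity $\dive(K*\psi)=(\dive K)*\psi=\delta_0*\psi=\psi$ is automatic for $\psi\in\D(\er^2)$.

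For the pointwise estimate \eqref{inftyest}, the approach is H\"older's inequality with dual exponents $3$ and $3/2$. Since $\psi$ is supported in $B(0,R)$,
$$
|K*\psi(x)|\le \|K(x-\cdot)\|_{L^{3/2}(B(0,R))}\|\psi\|_{L^3(B(0,R))},
$$
so the whole task reduces to bounding the $L^{3/2}$-norm of $K(x-\cdot)$ on $B(0,R)$ uniformly in $x\in\er^2$ by the appropriate power of $R$. I would split into two regimes. For $|x|\le 2R$, a change of variable gives $\|K(x-\cdot)\|_{L^{3/2}(B(0,R))}\le\|K\|_{L^{3/2}(B(0,3R))}$; since $|K(z)|^{3/2}$ is a constant multiple of $|z|^{-3/2}$, which is locally integrable in $\er^2$, a polar-coordinate computation gives a finite value scaling with the right power of $R$. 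For $|x|>2R$ and $y\in B(0,R)$ we have $|x-y|\ge|x|/2$, so $|K(x-y)|\le 1/(\pi|x|)$ and H\"older on the smaller domain $B(0,R)$ yields an even smaller bound, comfortably dominated by the estimate from the first regime.

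The main obstacle is a bookkeeping one: matching the power of $R$ obtained from the polar-coordinate calculation and combining the two regimes so the bound holds uniformly on all of $\er^2$ rather than just on the compact set where $\psi$ is supported. The singularity of $K$ at the origin is mild enough ($|z|^{-1}$ against the $2$-dimensional measure) that the $L^{3/2}$ integrability is never in doubt; everything is an exercise in careful scaling.
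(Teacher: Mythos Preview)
Your approach is correct and matches the paper's: for the first identity the paper simply sets $u=L*\psi$ and invokes $-\Delta u=\psi$ (you unpack this by excising a disc to verify $\dive K=\delta_0$, which is the standard proof of that fact), and for \eqref{inftyest} the paper uses exactly the H\"older pairing $(3,3/2)$ you propose, recording only the one-line bound $\|K\|_{L^{3/2}(B(x,R))}\le CR^{1/2}$ without your case split. Your two-regime argument is fine but slightly more than needed: since $|K|$ is radially decreasing, $\|K\|_{L^{3/2}(B(x,R))}\le\|K\|_{L^{3/2}(B(0,R))}$ for every $x$, which already gives the uniform bound (and in fact a polar computation yields $CR^{1/3}$, so your caution in writing ``the right power of $R$'' rather than $R^{1/2}$ is well placed).
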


\begin{proof}
Let 
$u=L*\psi$ be the Newtonian (alias logarithmic) potential of 
$\psi$.
Then
$$
\dive K*\psi = -\Delta u=\psi.
$$
The estimate \eqref{inftyest} follows from the H\"older inequality as
$$
\|K\|_{L^{3/2}(B(x,R))}\le CR^{1/2}.
$$
\end{proof}

\begin{thm}\label{t:twodim} 
Let  $Q=Q(\bar x,r)$ be a square in $\er^2$.
Let $g\in BV(Q,\er^2)\cap C(\overline Q,\er^2)$.
Suppose that 
\eqn{twodim}
$$
|g(\partial Q)|=0
$$
and
\eqn{lim3a}
$$
s:=\sup_{0<\rho<r}\frac{|Dg|(Q\setminus Q(\bar x,\rho))}{r-\rho}<\infty.
$$
Then
$$
\int_{\er^2}\deg(g,Q,y)=\J_g(Q).
$$
\end{thm}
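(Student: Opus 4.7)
The plan is to interpret $\J_g(Q)=\lim_k\langle\J_g,\ff_k\rangle$ for a sequence of standard boundary cutoffs $\ff_k\in\D(Q)$ with $\ff_k\equiv 1$ on $Q(\bar x,\rho_k)$, $0\le\ff_k\le 1$, $|\nabla\ff_k|\le C/(r-\rho_k)$, where $\rho_k\nearrow r$. Hypothesis \eqref{lim3a} then furnishes the key uniform bound $\|\nabla\ff_k\|_\infty\cdot|Dg|(Q\setminus Q(\bar x,\rho_k))\le Cs$. I would compute this limit in two ways, namely through the duality \eqref{genadj} with a \emph{fixed} vector field and through the degree formula of Lemma \ref{l:W} with a $k$-dependent vector field, and show that the two calculations agree.

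To set up the degree side, I would exploit $|g(\partial Q)|=0$ and the Hausdorff convergence $g(\overline Q\setminus Q(\bar x,\rho_k))\to g(\partial Q)$ ensured by continuity to produce $\eta_k\in\D(\er^2)$, $0\le\eta_k\le 1$, compactly supported in a fixed ball $B\supset g(\overline Q)$, with $\supp\eta_k\cap g(\overline Q\setminus Q(\bar x,\rho_k))=\emptyset$ and $\eta_k\nearrow 1$ pointwise on $\er^2\setminus g(\partial Q)$ (monotonicity being possible because the forbidden sets are themselves monotone). In parallel I would fix one smooth cutoff $\eta\in\D(\er^2)$ equal to $1$ on a neighbourhood of $g(\overline Q)$ and define $\Phi^{(k)}:=K\ast\eta_k$ and $\Phi:=K\ast\eta$ using the kernel $K$ from Lemma \ref{l:Lp}. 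Then $\Phi^{(k)},\Phi\in C^1$ with $\dive\Phi^{(k)}=\eta_k$ and $\dive\Phi=\eta$, and since $|\eta-\eta_k|\le 1$ is supported in sets of Lebesgue measure tending to $0$, the $L^3\to L^\infty$ bound \eqref{inftyest} yields $\|\Phi-\Phi^{(k)}\|_\infty\to 0$.

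All hypotheses of Lemma \ref{l:W} being met by construction for the triple $(\ff_k,\eta_k,\Phi^{(k)})$, the lemma delivers
\[
L_k:=-\sum_{i,j=1}^2\langle\adj_{ij}Dg,\,(\Phi^{(k)}_j\circ g)\,D_i\ff_k\rangle=\int_{\er^2}\eta_k(y)\deg(g,Q,y)\,dy,
\]
while \eqref{genadj} applied with the fixed $\Phi$ expresses $\langle\J_g,\ff_k\rangle=-\sum_{i,j}\langle\adj_{ij}Dg,(\Phi_j\circ g)D_i\ff_k\rangle$. The difference of these two quantities is bounded in absolute value by
\[
\|\Phi-\Phi^{(k)}\|_\infty\cdot\|\nabla\ff_k\|_\infty\cdot |Dg|(Q\setminus Q(\bar x,\rho_k))\le Cs\|\Phi-\Phi^{(k)}\|_\infty\to 0.
\]
Passing to the limit in $L_k$ by dominated convergence (using a standard two-dimensional degree-indicatrix estimate to secure $\deg(g,Q,\cdot)\in L^1(\er^2)$) gives $L_k\to\int_{\er^2}\deg(g,Q,y)\,dy$, and combining with $L_k-\langle\J_g,\ff_k\rangle\to 0$ one concludes $\J_g(Q)=\int_{\er^2}\deg(g,Q,y)\,dy$.

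The principal technical obstacle lies in the interaction of the two approximation scales: the boundary cutoff $\ff_k$ forces $\|\nabla\ff_k\|_\infty$ to blow up like $1/(r-\rho_k)$, and the companion field $\Phi^{(k)}$ must be chosen so that $\|\Phi-\Phi^{(k)}\|_\infty$ decays fast enough to be absorbed by that blow-up through the boundary-concentration hypothesis \eqref{lim3a}. It is precisely at this junction that the $L^3\to L^\infty$ estimate \eqref{inftyest} on the logarithmic-potential kernel $K$ and the null-set hypothesis $|g(\partial Q)|=0$ work in tandem, the former converting $L^3$-smallness of $\eta-\eta_k$ into uniform smallness of $\Phi-\Phi^{(k)}$, and the latter guaranteeing that $L^3$-smallness in the first place.
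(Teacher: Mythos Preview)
Your argument is the paper's own in all essentials: the same boundary cutoffs $\ff_k$, the same potentials $\Phi^{(k)}=K*\eta_k\to\Phi=K*\eta$ via Lemma~\ref{l:Lp}, the same pairing of Lemma~\ref{l:W} on the degree side with \eqref{genadj} on the $\J_g$ side, and the same absorption of the blowing-up factor $\|\nabla\ff_k\|_\infty$ through hypothesis \eqref{lim3a}. The one place you are looser than the paper is the integrability of $\deg(g,Q,\cdot)$: you invoke a ``standard degree--indicatrix estimate'', but for a merely continuous $BV$ map there is no such off-the-shelf bound---integrability of the multiplicity function is not automatic in this generality. The paper instead extracts $\deg(g,Q,\cdot)\in L^1$ as a preliminary step from the very machinery you have already assembled, by observing that the estimate
\[
\Bigl|\int_{\er^2}\eta\,\deg(g,Q,\cdot)\Bigr|\le \|K*\eta\|_\infty\cdot\|\nabla\ff\|_\infty\cdot|Dg|(Q\setminus Q(\bar x,\rho))\le C(s,R)
\]
holds uniformly over \emph{all} smooth $\eta$ with $|\eta|\le1$ supported in $B(0,R)\setminus g(\partial Q)$ (not just your particular sequence $\eta_k$), and then taking the supremum over such $\eta$. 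With that one-line addition your proof is complete and coincides with the paper's.
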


\begin{proof}
Let $B(0,R)$ be a ball containing $g(\overline Q)$.
Let $\eta$ be a smooth function with support in $B(0,R)\setminus g(\partial Q)$ 
such that  $|\eta|\le 1$.
Set $\Phi=K*\eta$, where $K$ is as in \eqref{howK}.
Then $\dive\Phi = \eta$ by Lemma \ref{l:Lp}.
We find $\rho< r$ such that 
$$
\overline{\{\eta\circ g\ne 0\}}\subset  Q(\bar x,\rho)
$$
and a test functions $\ff\in\D(Q)$ such that $0\le \ff\le 1$,
$\ff=1$ on $ Q(\bar x,\rho)$ and 
$$
|\nabla\ff|\le \frac{C}{r-\rho}.
$$
By Lemma \ref{l:W} we have
$$
\int_{\er^2}\eta(y)\,\deg(g,Q,y)\,dy=
-\langle\sum_{i,j=1}^2
\langle\adj_{ij} Dg,\;(\Phi_j\circ g)\, D_i\ff \rangle,
$$
and thus from \eqref{lim3a} and Lemma \ref{l:Lp} we infer that
$$
\Bigl|\int_{\er^2}\eta(y)\,\deg(g,Q,y)\,dy\Bigr|
\le C\sup_{y\in\er^2}|\Phi(y)|\;\sup_{0<\rho<r}\frac{|Dg|(Q\setminus Q(\bar x,\rho))}{r-\rho}\le CsR^{1/2}.
$$
Since this holds for all functions $\eta$ with the above listed properties, we deduce that 
\eqn{degest}
$$
\int_{\er^2}|\deg (g,Q,y)|\,dy<\infty.
$$
Now, let $\eta_0$ be a smooth function with compact support such that
$0\le \eta_0\le 1$ and $\eta_0=1$ on a neighborhood of $g(\overline Q)$. 
Consider a 
sequence $\eta_k$ of smooth functions such that $\eta_k=0$ on a neighborhood of
$g(\partial Q)$, $k=1,2,\dots$,
$0\le \eta_1\le \eta_2\le \dots\le \eta_0$ and 
$\eta_k\to\eta_0$ a.e. Let $K$ be as in \eqref{howK}. Set
$$
\Phi^{(k)}=K*\eta_k,\qquad k=1,2,\dots.
$$
From Lemma \ref{l:Lp} we obtain that 
$$
\dive\Phi^{k} = \eta_k\quad \text{in }g(\overline Q)
$$
and that  $\Phi^{(k)}\to \Phi$ uniformly
on $\overline {g(Q)}$ as $\eta_k\to \eta_0$ in $L^3(B(0,R))$.
Next, we find $\rho_k\nearrow r$ such that 
$$
\overline{\{\eta_k\circ g\ne 0\}}\subset  Q(\bar x,\rho_k)
$$
and test functions $\ff_k\in\D(Q)$ such that $0\le \ff_k\le 1$,
$\ff_k=1$ on $ Q(\bar x,\rho_k)$ and 
$$
|\nabla\ff_k|\le \frac{C}{r-\rho_k}.
$$
By Lemma \ref{l:W} we have
$$
- \sum_{i,j=1}^2
\big\langle\adj_{ij} Dg,\;(\Phi_j^{(k)}\circ g)\, D_i\ff\big \rangle=\int_{\er^2}\eta_k(y)\,\deg(g,Q,y)\,dy
$$
and passing to the limit as $k\to\infty$ and obtain 
$$
\J_g(Q)=\int_{\er^2}\,\deg(g,Q,y)\,dy.
$$
Indeed, the passage to the limit on the left follows from Lemma \ref{l:atx1} and  
the passage to the limit on the right is justified by \eqref{degest}. 
\end{proof}

\begin{remark}
Since for continuous $g\in BV(\Omega,\er^2)$,
``almost every'' square $Q\subset\Omega$ satisfies \eqref{lim3a}, we have obtained an
alternative proof of \cite[Theorem 4.1]{HKL}.
\end{remark}

\section{On various definitions of distributional adjugate}
\label{sec:defs}


Throughout this section, we use the symbol $i'$ for the action of 
the cyclic permutation on $i$, namely $1'=2$, $2'=3$, $3'=1$, $i''=(i')'$.
Also, we use the maps
$$
\kappa_1^t(y)=(t,y_1,y_2),\quad
\kappa_2^t(y)=(y_2,t,y_1),\quad
\kappa_3^t(y)=(y_1,y_2,t),\qquad y\in\er^2,\;t\in\er.
$$

The following notion of the distributional adjugate has been
introduced in \cite{HKL}.

\begin{definition}\label{slicedef}
  Let $f=(f_1,f_2,f_3) \ccolon \Omega\to\er^3$ be a continuous 
  $BV$ mapping.
The distributional adjugate of the first kind of $f$ is defined as 
$$
\langle
\ADJ_{ij} Df,\;\ff
\rangle
=
\int_{-\infty}^{\infty}
\langle
\Det (D(f_{j'}\circ\kappa_i^t),\,D(f_{j''}\circ\kappa_i^t)),\;
\ff\circ \kappa_i^t
\rangle\,dt,\quad \ff\in\D(\Omega).
$$  
Here the duality between $\Det (Df\circ \kappa_i^t)$ and 
$\ff\circ\kappa_i^t$ is considered on $(\kappa_i^t)^{-1}(\Omega)=\{x\in\Omega:\ x_i=t\}$.

We use the symbol $\dadj Df$ for $\ADJ Df$ if we know that
the distributional Jacobians
$\Det (D(f_{j'}\circ\kappa_i^t),\,D(f_{j''}\circ\kappa_i^t))$ are 
signed Radon measures for a.e.\ $t$ and all $i$, $j$.
\end{definition}

Following directly the way how we defined the distributional Jacobian
in Subsection \ref{ss:distrjac},
 we consider another 
approach to the distributional adjugate.

\begin{definition}\label{d:Adj}
Let $f=(f_1,f_2,f_3)\ccolon \Omega\to\er^3$ 
be a continuous $BV$ mapping.
The distributional adjugate of the second kind of $f$ is defined as 
$$
\Adj Df=\lim_{k}\adj \nabla f_k,
$$
where $f_k\to f$ are standard mollifications of $f$ and the convergence is in distributions.
\end{definition}

We can integrate by parts similarly to \eqref{genadj}, in particular we have
\begin{equation}\label{byparts}
 \begin{aligned}
  \langle
  \Adj_{ij} Df,\;\ff
  \rangle
  = 
    \langle
  D_{i'}f_{j''},\; f_{j'} D_{i''}\ff
  \rangle
  -
  \langle
  D_{i''}f_{j''},\; f_{j'} D_{i'}\ff
  \rangle,\qquad \ff\in\D(\Omega).
 \end{aligned}
\end{equation}

\begin{prop}\label{aapo}
 Let $f\in BV(\Omega,\er^3)$ be a continuous mapping, $i,j\in\{1,2,3\}$
 Then 
 \eqn{equality}
 $$
 \ADJ_{ij} Df=\Adj_{ij}Df.
 $$
If $\Adj_{ij} Df\in \M(\Omega)$, then for almost every $t\in \er$ it holds that
the distribution
$\delta_t:=\Det  \bigl(D(f_{j'}\circ\kappa_i^t),\,D(f_{j{''}}\circ\kappa_i^t)\bigr)$ 
is a signed Radon measure 
on $\Omega_t:=(\kappa_i^t)^{-1}(\Omega)$
and 
the function
$$
t\mapsto |\delta_t|(\Omega_t)
$$
is Lebesgue integrable.\newline
Therefore, $\Adj Df=\ADJ Df=\dadj Df$ if 
$\Adj Df\in\M(\Omega)$.
\end{prop}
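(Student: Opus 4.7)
The plan is to first establish \eqref{equality} by a slicing computation, then extract the measure property of $\delta_t$ from a disintegration of $\mu := \Adj_{ij}Df$.

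For \eqref{equality}, by cyclic symmetry I may take $i=3$, so $i'=1$ and $i''=2$. Starting from \eqref{byparts},
$$\langle \Adj_{3j} Df, \ff \rangle = \langle D_1 f_{j''}, f_{j'} D_2 \ff \rangle - \langle D_2 f_{j''}, f_{j'} D_1 \ff \rangle,$$
I would slice each term in $x_3$ using the $BV$ slicing identities of Subsection \ref{ss:slicing}. On each slice $\{x_3=t\}$ the resulting pairing coincides with the right-hand side of \eqref{defdistr} applied to the planar $BV$ map $(f_{j'}\circ\kappa_3^t, f_{j''}\circ\kappa_3^t)$ tested against $\ff\circ\kappa_3^t$; integrating in $t$ reassembles the expression into $\langle \ADJ_{3j}Df,\ff\rangle$, which is \eqref{equality}.

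For the measure and integrability claims, assume $\mu \in \M(\Omega)$ and let $\nu$ be the push-forward of $|\mu|$ under the $i$-th coordinate projection. Theorem \ref{t:disint} yields a disintegration $(\mu_t)_t$ of $\mu$ with respect to $\nu$, and by Remark \ref{r:disint} one has $|\mu_t|(\Omega_t)=1$ for $\nu$-a.e.\ $t$. Write the Lebesgue decomposition $\nu = a(t)\,d\lambda_1 + \nu_s$ on $\er$, with $a \in L^1(\er)$ and $\int a \le |\mu|(\Omega)$. Plugging a tensor-product test function $\ff(x) = \eta(x_i)\psi(x_{i'},x_{i''})$ into \eqref{equality} and using the disintegration gives
$$\int_\er \eta(t)\langle \delta_t,\psi\rangle\,dt = \int_\er \eta(t)a(t)\mu_t(\psi)\,dt + \int_\er \eta(t)\mu_t(\psi)\,d\nu_s(t).$$
Since the left hand side is absolutely continuous with respect to $\lambda_1$, the singular contribution on the right must vanish for all $\eta$; hence $\langle \delta_t,\psi\rangle = a(t)\mu_t(\psi)$ for Lebesgue-a.e.\ $t$. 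Running $\psi$ through a countable family dense in the relevant $C_0$-spaces (coming from a countable exhaustion of $\Omega$ by product boxes) and combining exceptional $t$-sets, I conclude $\delta_t = a(t)\mu_t$ as a signed Radon measure for a.e.\ $t$, with $|\delta_t|(\Omega_t) = a(t)\in L^1(\er)$. The concluding identity $\Adj Df = \ADJ Df = \dadj Df$ is then immediate from \eqref{equality} and the hypothesis defining $\dadj$.

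The main obstacle I expect is the book-keeping of the countable-dense-family argument combined with the $t$-dependence of slice domains $\Omega_t$: one needs a single Lebesgue null set in $t$ valid for all test functions simultaneously, which requires choosing a countable base of product boxes exhausting $\Omega$ and a countable dense subset of $\D$ on each.
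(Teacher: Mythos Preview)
Your proposal is correct and follows essentially the same route as the paper: slice \eqref{byparts} in the $i$-th variable to get \eqref{equality}, then disintegrate $\mu=\Adj_{ij}Df$ with respect to the push-forward $\nu$ and match against the sliced expression on tensor test functions $\eta\otimes\psi$ to identify $\delta_t=a(t)\mu_t$ for a.e.\ $t$.

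The one substantive difference is in how the singular part of $\nu$ is dispatched. The paper argues by contradiction that $\nu\ll\lambda_1$: assuming $\nu_s\ne 0$, it builds cutoffs $\theta_k\searrow\chi_E$ for a compact $E$ with $\lambda_1(E)=0<\nu(E)$, and compares the two sides of the sliced identity in the limit (the $\ADJ$-side vanishes by dominated convergence using the $BV$-slicing bound $|\langle\delta_t,\psi\rangle|\le C\,|Df(\cdot,\cdot,t)|$, the disintegration side stays $\ge\nu(E)$). You instead observe directly that $\eta\mapsto\int\eta(t)\langle\delta_t,\psi\rangle\,dt$ is an absolutely continuous measure in $\eta$ and invoke uniqueness of the Lebesgue decomposition to kill the $\nu_s$-term. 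This is a legitimate shortcut, but note that it silently uses the same ingredient the paper makes explicit: you need $t\mapsto\langle\delta_t,\psi\rangle\in L^1$, which is exactly the $BV$-slicing estimate above. It would be worth stating this. Also, when you pass from the countable family $\{\psi_k\}$ to the conclusion $\delta_t=a(t)\mu_t$, you are using that $\psi\mapsto\langle\delta_t,\psi\rangle$ is continuous for the $C^1$-norm (clear from \eqref{defdistr}); ``dense in $C_0$'' is not quite the right phrase.
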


\begin{proof}
We prove the result only for $i=j=3$ as all the other cases are similar. Without loss of generality we will also assume that $\Omega=(0,1)^3$.
Let $\varphi\in C^\infty_0(\Omega).$ 
Using this $\varphi$ as a test function, for almost every $t\in(0,1)$ we obtain 
$$
  \begin{aligned}
&
\langle
\Det (D(f_{1}\circ\kappa_3^t),\,D(f_{2}\circ\kappa_3^t)),\;
\ff\circ \kappa_3^t
\rangle
\\&\quad
=
\langle D_1f_2(\cdot,\cdot,t),\;f_1(\cdot,\cdot,t)D_2\ff(\cdot,\cdot,t)\rangle
-\langle D_2f_2(\cdot,\cdot,t),\;f_1(\cdot,\cdot,t)D_1\ff(\cdot,\cdot,t)\rangle.
  \end{aligned}
  $$ 
Integrating with respect to $t$ like in Subsection \ref{ss:slicing} 
and using \eqref{byparts}
we obtain
\eqn{integratedadj}
$$
\aligned
\langle
\ADJ_{33} Df,\;\ff
\rangle
&=
\int_0^1
\Bigl(\langle D_1f_2(\cdot,\cdot,t),\;f_1(\cdot,\cdot,t)D_2\ff(\cdot,\cdot,t)\rangle
\\&\qquad-\langle D_2f_2(\cdot,\cdot,t),\;f_1(\cdot,\cdot,t)D_1\ff(\cdot,\cdot,t)\rangle
\Bigr)\,dt
\\&
=
\langle D_1f_2,\;f_1D_2\ff\rangle
-\langle D_2f_2,\;f_1D_1\ff\rangle
\\&
=\langle
\Adj_{33}Df,\;\ff
\rangle.
\endaligned
$$  
This proves \eqref{equality}.
Now, assume that $\mu:=\Adj_{33} Df\in\M(\Omega)$. 
By Theorem \ref{t:disint},
there exists a disintegration $(\mu_t)_{t\in (0,1)}$ of $\mu$ with respect to $\nu$,
where $\nu$ is as in \eqref{hownu}.
We will first show that $\nu$ is absolutely continuous with respect to the 
Lebesgue measure on $(0,1)$.
 
Assume that $\nu$ is not absolutely continuous.
Then there exists a set $E'\subset (0,1)$ of zero Lebesgue measure 
such that $\nu(E')>0$.
We 
choose a test function
$\psi\in C_0^\infty((0,1)^2)$ such that 
$$
\int_{(0,1)^2} \psi\,d\mu_t >1
$$
for every $t\in E$ where $E$ is a compact subset of $E'$ with $\nu(E)>0.$
This can be done as follows. 
Let $\{\psi_k\}_{k\in \en}$ be a dense sequence in $C^1_0((0,1)^2).$ 
Given any $t$ such that $\mu_t$ is nontrivial measure there is an index $k$ such that
\begin{equation}
\label{positiveslice}
\int_{(0,1)^2} \psi_k\, d\mu_t>1.
\end{equation}
By countable additivity of measures there has to be at least one $k$ such that \eqref{positiveslice} holds for every $t\in E,$ where $E\subset E'$ and $\nu(E)>0.$ Without loss of generality  we may assume that $E$ is compact and, of course, $E$ has 1-dimensional measure zero.

Now, take a sequence $\theta_k$ of smooth functions on $(0,1)$ with compact support
such that 
$0\le \theta_k\le 1$, $\theta_k=1$ on $E$ and $\theta_k\searrow 0$ on
$(0,1)\setminus E$.
Plugging $\theta_k(t)\psi(x_1,x_2)$
into \eqref{disint} and \eqref{integratedadj} we obtain
\eqn{compare}
$$
\aligned
&\int_0^1\Bigl(\int_{(0,1)^2}\theta_k(t)\psi(y)\,d\mu_t(y)\Bigr)\,d\nu(t)
\\&\quad
=\int_0^1\theta_k(t)
\Bigl(\langle D_1f_2(\cdot,t),\;f_1(\cdot,t)D_2\psi\rangle
\\&\qquad-\langle D_2f_2(\cdot,t),\;f_1(\cdot,t)D_1\psi\rangle
\Bigr)\,dt,
\endaligned
$$
where $f(\cdot,t)$ is the function $y\mapsto f(y_1,y_2,t)$.
The integrand on the right is estimated by
$$
C(|D_1f(\cdot,t)|+|D_2f(\cdot,t)|),
$$
which is integrable, see Subsection \ref{ss:slicing}. Since the limit
is zero a.e., the limit on the right hand part of \eqref{compare} is zero
by the Lebesgue dominated convergence theorem.
Similarly we proceed on the left, as 
$$
t\mapsto \Bigl|\int_{(0,1)^2}\psi\,d\mu_t\Bigr|
$$
is integrable with respect to $\nu$, however, here the limit
of integrals is 
$$
\int_{E}\Bigl(\int_{(0,1)^2}\psi\,d\mu_t\Bigr)\,d\nu(t)\ge \nu(E).
$$
This contradiction shows that $\nu$ is absolutely continuous with respect 
to the Lebesgue measure. Let $a$ be the density $d\nu/dt$.
Consider a dense sequence $\{\psi_k\}_{k\in \en}$ in $C^1_0((0,1)^2).$ 
Analogously to \eqref{compare}, for any $k\in\en$  we have
$$
\int_0^1\theta(t)\langle\Det D(f_1(\cdot,t),f_2(\cdot,t)),\psi_k\rangle
=\int_0^1a(t)\theta(t)\langle\nu_t,\psi_k\rangle,
\qquad\theta\in C_0((0,1)).
$$
Hence there exists a Lebesgue null set $N_k\subset(0,1)$
such that 
$$
\langle\Det D(f_1(\cdot,t),f_2(\cdot,t)),\psi_k\rangle=a(t)\langle\nu_t,\psi_k\rangle,
\qquad t\in (0,1)\setminus N_k.
$$
It follows that for each $t\in (0,1)\setminus \bigcup_{k}N_k$ we have
$$
\Det D(f_1(\cdot,t),f_2(\cdot,t))=a(t)\mu_t.
$$
We conclude that the distributions $\Det D(f_1(\cdot,t),f_2(\cdot,t))$ are 
signed Radon measures. 
Since by Remark \ref{r:disint} and \eqref{hownu} 
$$
\int_0^1 a (t)\; d|\mu_t|=\nu((0,1))=|\mu|(\Omega),
$$
the function $t\mapsto \Det D(f_1(\cdot,t),f_2(\cdot,t))$ is integrable. 
\end{proof}



\section{From gradient to degree}\label{s:grad}

Throughout this section we suppose that $u\in BV(\Omega)$
is continuous, in applications this will be the third coordinate of
a BV homeomorphism.

We define
$$
\aligned
h(x)&=(x_1,x_2,u(x)).
\endaligned
$$
Our aim is to prove that 
\eqn{horni}
$$
D_3u(U)=\int_{\er^2}\deg(h,U,z)\,dz
$$
provided that $|h(\partial U)|=0$.

\begin{lemma}\label{l:formulaa}
Let  $U\subset\subset\Omega$ be an open set. 
Let $\eta\in\D(\rn)$ and
$\spt\eta\cap  h(\partial U)=\emptyset$.
Then
$$
\int_{\er^3}\eta(z)\,\deg(h,U,z)\,dz=
\langle D_3u, \eta\circ h\rangle.
$$
\end{lemma}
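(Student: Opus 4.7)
The plan is to prove the identity by smooth approximation: mollify the third coordinate of $h$, apply the classical degree formula (Section \ref{changevar}) on the smooth level where $J_{h_k}$ collapses to $\partial_3 u_k$, and then pass to the limit using Lemma \ref{lemma:TopologicalDegree} on the left and weak-$*$ convergence of measures on the right. The hypothesis $\spt\eta\cap h(\partial U)=\emptyset$ will play a double role: it stabilises the topological degree under perturbation, and it makes $\eta\circ h$ vanish on $\partial U$, which is exactly what is needed to turn it into a compactly supported test function against the measure $D_3u$.

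More concretely, let $u_k=u*\rho_{1/k}$ be standard mollifications, defined on $\Omega_{1/k}$; since $U\subset\subset\Omega$, $\overline U\subset\Omega_{1/k}$ for $k$ large. Put $h_k(x)=(x_1,x_2,u_k(x))$. The matrix $Dh_k$ is triangular with diagonal $(1,1,\partial_3u_k)$, so $J_{h_k}=\partial_3 u_k$, and \eqref{changeofvariables} applied to $h_k$ and $v=\eta$ gives
$$
\int_{\er^3}\eta(z)\,\deg(h_k,U,z)\,dz=\int_U \eta(h_k(x))\,\partial_3 u_k(x)\,dx.
$$
It remains to let $k\to\infty$ in both sides.

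On the left, $u$ continuous gives $u_k\to u$ uniformly on $\overline U$, hence $h_k\to h$ uniformly on $\partial U$. The positive number $\delta\ccolonequals\dist(\spt\eta,h(\partial U))$ is positive because the two compact sets are disjoint, and for $k$ large $\|h_k-h\|_{L^\infty(\partial U)}<\delta$; Lemma \ref{lemma:TopologicalDegree} then yields $\deg(h_k,U,z)=\deg(h,U,z)$ for every $z\in\spt\eta$. Since $\deg(h,U,\cdot)$ is integer-valued and locally constant on $\er^3\setminus h(\partial U)$, it is bounded on the compact set $\spt\eta$, so dominated convergence delivers $\int\eta\,\deg(h_k,U,\cdot)\to\int\eta\,\deg(h,U,\cdot)$.

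On the right, split
$$
\int_U\eta(h_k)\partial_3u_k\,dx=\int_U\bigl[\eta(h_k)-\eta(h)\bigr]\partial_3u_k\,dx+\int_U\eta(h)\,\partial_3u_k\,dx.
$$
The first term vanishes because $\eta$ is Lipschitz and $\|h_k-h\|_\infty\to0$, while $\int_U|\partial_3u_k|\,dx$ is bounded by $|D_3u|(V)$ for any $V$ with $\overline U\subset V\subset\subset\Omega$ (standard BV mollification estimate). For the second, set $\psi\ccolonequals\eta\circ h$ on $\overline U$ and $\psi=0$ on $\Omega\setminus U$; the hypothesis $\spt\eta\cap h(\partial U)=\emptyset$ forces $\eta\circ h=0$ on $\partial U$, so this extension is continuous with compact support in $\Omega$, i.e.\ $\psi\in C_c(\Omega)$. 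The weak-$*$ convergence $\partial_3u_k\,\lambda_3\rightharpoonup^* D_3u$ of mollifications then gives $\int\psi\,\partial_3u_k\,dx\to\int\psi\,dD_3u=\langle D_3u,\eta\circ h\rangle$, which finishes the proof.

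The main delicate point is the second limit: the natural test object $\eta\circ h$ is only continuous (since $h$ is BV), not smooth, and it is not \emph{a priori} compactly supported inside $\Omega$. Both issues are resolved simultaneously by the boundary hypothesis on $\spt\eta$, which allows the zero-extension to land in $C_c(\Omega)$, which is exactly the class against which mollifications of $Du$ converge.
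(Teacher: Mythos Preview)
Your proof is correct and follows essentially the same route as the paper: mollify $u$, apply the degree formula \eqref{changeofvariables} to the smooth map $h_k=(x_1,x_2,u_k)$ where $J_{h_k}=\partial_3 u_k$, and pass to the limit. The paper compresses the entire limiting argument into the single sentence ``Passing to the limit with convolution approximation we obtain the required formula'', whereas you spell out both limits carefully---using Lemma \ref{lemma:TopologicalDegree} for the degree side and the zero-extension trick to turn $\eta\circ h$ into a $C_c(\Omega)$ test function on the measure side---but the underlying strategy is identical.
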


\begin{proof}
Assume first that $h$ is smooth. Then 
(taking into account that $h(x)=(x_1,x_2,u(x))$) 
the degree formula \eqref{changeofvariables} yields
$$
\aligned
\int_{\rn}\eta(z)\,\deg(h,U,z)\,dz
&=\int_{U}\eta(h(x))\,J_h(x)\,dx
\\
&=\int_{U}\eta(h(x))\,\frac{\partial u(x)}{\partial x_3}\,dx.
\endaligned
$$
Passing to the limit with convolution approximation we obtain 
the required formula.
\end{proof}

\begin{lemma}\label{l:integrablea} Let $U$ be as above and $|h(\partial U)|=0$. Then
the function $\deg(h,U,\cdot)$ is integrable.
\end{lemma}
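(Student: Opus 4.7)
The plan is to extract integrability of $\deg(h,U,\cdot)$ by dualizing the identity of Lemma \ref{l:formulaa}, exploiting the hypothesis $|h(\partial U)|=0$.

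First I would record the basic dual estimate. Since $U\subset\subset\Omega$ and $u\in BV(\Omega)$ is continuous, the total variation $C:=|D_3u|(\Omega)$ is finite. For every $\eta\in\D(\er^3)$ satisfying $|\eta|\le 1$ and $\spt\eta\cap h(\partial U)=\emptyset$, Lemma \ref{l:formulaa} yields
$$
\Bigl|\int_{\er^3}\eta(z)\,\deg(h,U,z)\,dz\Bigr|
=|\langle D_3u,\eta\circ h\rangle|\le C,
$$
since $|D_3u|$ is a finite Radon measure and $\eta\circ h$ is continuous with $\|\eta\circ h\|_\infty\le 1$.

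Next I would promote this uniform dual bound to an $L^1$ estimate on $\deg(h,U,\cdot)$. Observe that $\deg(h,U,\cdot)$ is integer-valued and locally constant on the open set $W:=\er^3\setminus h(\partial U)$, which has full Lebesgue measure by the assumption $|h(\partial U)|=0$, and vanishes outside the bounded set $h(\overline U)$ (so in particular on the unbounded component of $W$). Hence on any compact $K\subset W$ the degree takes only finitely many values, and the function $\sgn(\deg(h,U,\cdot))\,\chi_K$ is bounded measurable with support compactly contained in the open set $W$. I would approximate it in $L^1(\er^3)$ by smooth functions $\eta_n\in\D(W)$ with $|\eta_n|\le 1$ via standard mollification with parameter smaller than $\dist(K,h(\partial U))>0$; this distance is positive because $K$ and $h(\partial U)$ are disjoint compacta in $\er^3$, the latter being the continuous image of the compact set $\partial U\subset\overline U$. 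Applying the dual bound to each $\eta_n$ and passing to the limit gives
$$
\int_K|\deg(h,U,z)|\,dz\le C,
$$
and taking the supremum over compact $K\subset W$ together with $|W^c|=0$ yields $\int_{\er^3}|\deg(h,U,z)|\,dz\le C<\infty$.

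I do not expect any serious obstacle here; the argument is the standard passage from a dual bound to integrability. The only mild technical point is keeping the mollified approximations supported inside $W$, which is handled by the positive distance between $K$ and the compact set $h(\partial U)$.
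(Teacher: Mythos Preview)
Your proof is correct and follows essentially the same approach as the paper: derive the uniform dual bound from Lemma \ref{l:formulaa} and then pass from test functions to the $L^1$ norm of $\deg(h,U,\cdot)$, using that $\deg(h,U,\cdot)$ vanishes outside $h(\overline U)$ and that $|h(\partial U)|=0$. The only difference is cosmetic: the paper simply says ``passing to the supremum over admissible $\eta$'' to obtain $\int_{h(\overline U)\setminus h(\partial U)}|\deg(h,U,z)|\,dz\le |Du|(U)$, while you spell out this supremum step explicitly via mollification of $\sgn(\deg(h,U,\cdot))\chi_K$ on compact $K\subset W$ and exhaustion, which is a legitimate (and slightly more detailed) justification of the same passage.
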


\begin{proof}
Let $\eta$ be a $C^{\infty}$ function on 
$\er^3$ with $\spt\eta\subset h(\overline U)\setminus h(\partial U)$ and
$|\eta|\le 1$. 
By Lemma \ref{l:formulaa},
$$
\aligned
\Bigl|\int_{\rn}\eta(z)\,\deg(h,U,z)\,dz\Bigr|
&=
\Bigl|\langle D_3u, \eta\circ h\rangle\Bigr|
\\&
\le |Du|(U).
\endaligned
$$
Passing to the supremum over admissible $\eta$ we obtain
$$
\int_{h(\overline U)\setminus h(\partial U)}|\deg(h,U,z)|\,dz\le 
 |Du|(U).
$$
Since $\deg(h,U,\cdot)=0$ on $\er^3\setminus h(\overline U)$
and $|h(\partial U)|=0$, the integrability of $\deg(h,U,\cdot)=0$
is verified.
\end{proof}

\begin{thm}\label{t:D=deg}
Let  $U\subset\subset\Omega$ be an open set and $|h(\partial U)|=0$.
Then
$$
\int_{\er^3}\deg ((x_1,x_2,u),U,z)\,dz= D_3u (U).
$$
\end{thm}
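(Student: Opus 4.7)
The plan is to apply Lemma \ref{l:formulaa} with a sequence of test functions $\eta_k\in\D(\er^3)$ approximating the constant $1$ outside the null set $h(\partial U)$, and to pass to the limit on both sides. Since $h(\partial U)$ is compact and has Lebesgue measure zero, I choose a decreasing sequence of open neighborhoods $W_k$ of $h(\partial U)$ with $\bigcap_k W_k = h(\partial U)$, and pick $\eta_k\in C_c^\infty(\er^3)$ with $0\le\eta_k\le 1$, $\spt\eta_k\cap h(\partial U)=\emptyset$, $\eta_k\equiv 1$ on $B(0,k)\setminus W_k$, and $\eta_k\equiv 0$ on a smaller open neighborhood of $h(\partial U)$.

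By Lemma \ref{l:formulaa},
$$
\int_{\er^3}\eta_k(z)\,\deg(h,U,z)\,dz=\langle D_3u,\eta_k\circ h\rangle=\int_{U}(\eta_k\circ h)\,dD_3u,
$$
where the last equality comes from the proof of Lemma \ref{l:formulaa} and is consistent with Lemma \ref{l:integrablea}. The left-hand side tends to $\int_{\er^3}\deg(h,U,z)\,dz$ by dominated convergence, since $\eta_k\to 1$ pointwise off $h(\partial U)$ and $|\deg(h,U,\cdot)|$ is Lebesgue-integrable. For the right-hand side, for each $x\in U$ and all large $k$ one has $h(x)\in h(\overline U)\subset B(0,k)$, so $\eta_k(h(x))=1$ precisely when $h(x)\notin W_k$; as $k\to\infty$ the functions $\eta_k\circ h$ converge pointwise on $U$ to $\chi_{U\setminus h^{-1}(h(\partial U))}$ and are dominated by $1$, so dominated convergence gives the limit $D_3u\bigl(U\setminus h^{-1}(h(\partial U))\bigr)$.

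The hard part, and the place where the hypothesis $|h(\partial U)|=0$ really enters, is the auxiliary measure-theoretic identity
$$
|D_3u|\bigl(U\cap h^{-1}(h(\partial U))\bigr)=0,
$$
which would then force $D_3u(U\setminus h^{-1}(h(\partial U)))=D_3u(U)$ and close the argument. I will establish this by BV slicing (Subsection \ref{ss:slicing}) combined with the one-dimensional coarea formula. For a.e.\ $(x_1,x_2)\in\er^2$, the slice $v(x_3):=u(x_1,x_2,x_3)$ is a continuous $BV$ function and the $(x_1,x_2)$-slice of $|D_3u|$ coincides with $|Dv|$. Writing $T=T(x_1,x_2)=\{y_3:(x_1,x_2,y_3)\in\partial U\}$, the $(x_1,x_2)$-slice of $U\cap h^{-1}(h(\partial U))$ is contained in $v^{-1}(v(T))$, and Fubini applied to $|h(\partial U)|=0$ gives $|v(T)|=0$ (one-dimensional Lebesgue) for a.e.\ $(x_1,x_2)$.

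The final ingredient is the one-dimensional Banach-indicatrix/coarea formula: for a continuous $BV$ function $v$ on an interval and any Borel set $A\subset\er$,
$$
|Dv|\bigl(v^{-1}(A)\bigr)=\int_A N(v,c)\,dc,
$$
where the multiplicity $N(v,\cdot)$ is Lebesgue-integrable with total integral $|Dv|(\er)<\infty$. Taking $A=v(T)$, whose Lebesgue measure vanishes for a.e.\ $(x_1,x_2)$, we obtain $|Dv|(v^{-1}(v(T)))=0$, hence the $|Dv|$-measure of the slice of $U\cap h^{-1}(h(\partial U))$ is zero. Integrating in $(x_1,x_2)$ via the slicing identity produces $|D_3u|(U\cap h^{-1}(h(\partial U)))=0$, which completes the proof.
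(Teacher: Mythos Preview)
Your argument follows the same outline as the paper's: choose test functions $\eta_k$ supported away from $h(\partial U)$ and increasing to $1$, apply Lemma~\ref{l:formulaa}, and pass to the limit using the integrability from Lemma~\ref{l:integrablea}. The paper's proof, however, simply asserts that $\lim_j\langle D_3u,\eta_j\circ h\rangle=D_3u(U)$ on the grounds that $D_3u$ is a finite measure, whereas dominated convergence a priori yields only $D_3u\bigl(U\setminus h^{-1}(h(\partial U))\bigr)$, since $\eta_j\circ h\to 0$ on $U\cap h^{-1}(h(\partial U))$. Your additional slicing argument combined with the one-dimensional Banach indicatrix formula correctly supplies the missing identity $|D_3u|\bigl(U\cap h^{-1}(h(\partial U))\bigr)=0$, so your proof is in fact more complete than the paper's on this point.
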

\begin{proof}
Let $\eta_j\in\D(\er^3)$ be smooth functions satisfying
$\spt\eta_j\cap h(\partial U)=\emptyset$ and 
$\eta_j\nearrow 1$ on $h(U)\setminus h(\partial U)$. 
By Lemma \ref{l:formulaa},
$$
\aligned
\int_{\er^3}\deg ((x_1,x_2,u),U,z)\,dz&= 
\lim_{j\to\infty}\int_{\er^3}\eta_j(z)\deg (h,U,z)\,dz
=\lim_{j\to\infty}\langle D_3u,\;\eta_j\circ h\rangle 
\\&=D_3u (U).
\endaligned
$$
The passage to the limit is justified as
$\deg (h,U,\cdot)$ is integrable by Lemma \ref{l:integrablea}
and $D_3u$ is a finite measure.
\end{proof}


\section{From adjugate to degree}\label{s:Adj}

Throughout this section we consider a continuous mapping $f\in \BVd(\Omega,\er^3)$
with a continuous $BV$ inverse.
We define
$$
\aligned
g(x)&=(f_1(x),f_2(x),x_3).
\endaligned
$$
We are going to prove that there is a sufficiently rich 
collection of open sets $U\subset\subset\Omega$
in $\Omega$ such that for each such $U$
we have
\eqn{dolni}
$$
\Adj_{33}Df(U)=\int_{\er^3}\deg(g,U,z)\,dz.
$$

\begin{lemma}\label{l:msz} Let $K\subset \er^3$ be a compact set and $u\ccolon 
K\to\er$ be a continuous function. If $\haus^2(K)<\infty$, then
$\haus^3(\Gamma_u(K))=0$, where $\Gamma_u$ is the mapping
$x\mapsto (x,u(x))$.
\end{lemma}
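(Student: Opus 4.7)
The plan is to exploit that $u$, being continuous on the compact set $K$, is uniformly continuous, and that we can therefore choose efficient coverings of $K$ in which $u$ has tiny oscillation. The resulting pieces of the graph $\Gamma_u(K)\subset\er^4$ sit in very thin slabs, and slicing each slab into cubes of the same scale as the base set will allow the $\haus^3$-premeasure at an appropriate scale to be controlled by $\haus^2(K)$ times the modulus of continuity.

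Concretely, fix $\eps>0$. By uniform continuity of $u$ on $K$, for all sufficiently small $\delta>0$ one has $|u(x)-u(y)|\le\delta$ whenever $x,y\in K$ and $|x-y|\le\delta$. Using the definition of $\haus^2$ and $\haus^2(K)<\infty$, I pick a cover $\{A_i\}$ of $K$ with $\diam A_i\le \delta$ and
$$
\alpha_2\sum_i\bigl(\tfrac12\diam A_i\bigr)^2\le \haus^2(K)+1.
$$
Write $r_i\ccolonequals \diam A_i\le\delta$. Each $A_i$ is contained in a cube $Q_i\subset\er^3$ of side $r_i$, and by the choice of $\delta$ the values $u$ takes on $A_i$ lie in some interval $[c_i,c_i+\delta]$. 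Hence
$$
\Gamma_u(A_i)\subset Q_i\times [c_i,c_i+\delta]\subset\er^4.
$$

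Now I partition each slab $Q_i\times[c_i,c_i+\delta]$ into a stack of at most $\lceil\delta/r_i\rceil\le \delta/r_i+1$ cubes of side $r_i$ in $\er^4$; each such cube has diameter $2r_i\le 2\delta$. This yields a cover of $\Gamma_u(K)$ by sets of diameter at most $2\delta$, so
$$
\haus^3_{2\delta}(\Gamma_u(K))\le \alpha_3\sum_i\bigl(\tfrac{\delta}{r_i}+1\bigr)r_i^3\le 2\alpha_3\,\delta\sum_i r_i^2,
$$
where in the last step I used $r_i\le\delta$ to dominate $\sum r_i^3$ by $\delta\sum r_i^2$. The sum $\sum r_i^2$ is bounded by a constant multiple of $\haus^2(K)+1$ by the choice of the cover, so
$$
\haus^3_{2\delta}(\Gamma_u(K))\le C\,\delta\,(\haus^2(K)+1).
$$
Letting $\delta\to 0^+$ gives $\haus^3(\Gamma_u(K))=0$.

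The only mildly delicate point is lining up the two roles of $\delta$: the scale at which the cover of $K$ lives and the oscillation bound for $u$. Once uniform continuity is invoked to make these comparable, the whole argument is a standard Hausdorff-measure bookkeeping, and there is no serious obstacle.
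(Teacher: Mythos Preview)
Your approach is essentially identical to the paper's: cover $K$ efficiently at scale $\delta$, use uniform continuity to bound the oscillation of $u$ on each covering set, and estimate the $\haus^3$-content of each resulting slab by partitioning it into cubes of side comparable to the base diameter. The paper writes this as $\haus^3_\infty(A_j\times(u(x_j)-\ep,u(x_j)+\ep))\le C\ep(\diam A_j)^2$, which is exactly your cube-stacking step.

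There is, however, one genuine slip. The sentence ``for all sufficiently small $\delta>0$ one has $|u(x)-u(y)|\le\delta$ whenever $|x-y|\le\delta$'' is false for a general continuous $u$: it says the modulus of continuity satisfies $\omega(\delta)\le\delta$ for small $\delta$, which is a Lipschitz-type bound. For a counterexample take $K=[0,1]^2\times\{0\}$ and $u(x)=\sqrt{x_1}$. You seem to have sensed this yourself, since you introduced $\eps$ at the start and then never used it, and you flagged ``lining up the two roles of $\delta$'' at the end. The fix is exactly the two-parameter setup the paper uses: given $\eps>0$, choose $\delta\in(0,\eps)$ so that $|x-y|<\delta$ implies $|u(x)-u(y)|<\eps$. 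Then each $\Gamma_u(A_i)$ lies in $Q_i\times[c_i,c_i+\eps]$, you stack $\lceil\eps/r_i\rceil\le \eps/r_i+1$ cubes of side $r_i$, and your computation becomes
\[
\haus^3_{2\delta}(\Gamma_u(K))\le \alpha_3\sum_i(\eps r_i^2+r_i^3)\le \alpha_3(\eps+\delta)\sum_i r_i^2\le C\eps\,(\haus^2(K)+1).
\]
Now let $\delta\to 0$ and then $\eps\to 0$. With this correction the argument is complete and matches the paper.
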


\begin{proof} Choose $\ep>0$ and find $\delta\in(0,\ep)$ such that 
$$
x,x'\in K,\;|x-x'|<\delta\implies |u(x')-u(x)|<\ep.
$$
Let $(A_j)_j$ be a covering of $K$ by sets of diameter $<\delta$ and 
choose $x_j\in A_j$. A simple partition argument shows that 
$$
\haus_{\infty}^3(\Gamma_u(A_j))\le\haus_{\infty}^3(A_j\times (u(x_j)-\ep,\,u(x_j)+\ep))
\le C\ep(\diam A_j)^2.
$$
Summing over $j$ we obtain
$$
\haus_{\infty}^3(\Gamma_u(K))\le C\ep\sum_j(\diam A_j)^2
$$
and passing to the infimum over all coverings we conclude
$$
\haus_{\infty}^3(\Gamma_u(K))\le C\ep\haus_{\delta}^2(K).
$$
\end{proof}

\begin{lemma}\label{l:formula}
 Let $Q\subset\subset \Omega$ be a cube such that $|g(\partial Q)|=0$.
Let $\eta\in\D(\er^3)$ and
$\spt\eta\cap  g(\partial Q)=\emptyset$.
Let $\Phi\ccolon\er^3\to\er^3$ be a smooth function such that $\Phi_3=0$ and
\eqn{diver}
$$
D_1\Phi_1+D_2\Phi_2=\eta.
$$
Let $\ff\in \D(Q)$ be a test function such that $\ff=1$ on
$\{\eta\ne 0\}$.
Then
$$
\int_{\er^3}\eta(z)\,\deg(g,Q,z)\,dz=
\langle Df_2,\ (\Phi_1\circ g)\;  \nabla\ff\times\e_3\rangle
-\langle Df_1,\ (\Phi_2\circ g)\; \nabla\ff\times\e_3\rangle.
$$
\end{lemma}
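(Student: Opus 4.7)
The plan is to first establish the identity in the smooth case (assuming $f_1, f_2$ are smooth, so that $g = (f_1, f_2, x_3)$ is smooth) and then pass to the limit via standard mollification, following the pattern of Lemma \ref{l:W}. In the smooth case, the key tool is the Piola identity
\[
\sum_{i,j=1}^3 D_i\bigl(\adj_{ij}Dg \cdot \Phi_j\circ g\bigr) = (\dive\Phi)\circ g \cdot \det Dg,
\]
which for our particular $g$ and $\Phi$ (using $\Phi_3\equiv 0$ and $D_1\Phi_1 + D_2\Phi_2 = \eta$) reduces to $(\eta\circ g) J_g$. Multiplying by $\ff$, integrating over $Q$, and integrating by parts, the left side becomes $\int_Q (\eta\circ g) J_g\,dx$ (using $\ff = 1$ on $\{\eta\circ g\ne 0\}$, so that $\ff$ can be removed), which by the change-of-variables formula \eqref{changeofvariables} equals $\int_{\er^3}\eta(z)\deg(g,Q,z)\,dz$.

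For the right side I would expand the sum explicitly. Since $g_3(x) = x_3$, the two minors $\adj_{31}Dg$ and $\adj_{32}Dg$ vanish, and together with $\Phi_3 = 0$ this leaves only terms with $i,j\in\{1,2\}$, where direct computation gives $\adj_{11} = D_2 f_2$, $\adj_{12} = -D_2 f_1$, $\adj_{21} = -D_1 f_2$, $\adj_{22} = D_1 f_1$. Using the identity $\nabla\ff\times\e_3 = (D_2\ff,-D_1\ff,0)$, the four surviving terms regroup as
\[
\langle Df_2,(\Phi_1\circ g)\,\nabla\ff\times\e_3\rangle \;-\; \langle Df_1,(\Phi_2\circ g)\,\nabla\ff\times\e_3\rangle,
\]
completing the proof for smooth $g$.

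For general continuous $f\in BV$, I would set $f_{j,k}$ to be the standard mollifications of $f_j$ and $g_k = (f_{1,k}, f_{2,k}, x_3)$. On the right, $\Phi_j\circ g_k \to \Phi_j\circ g$ uniformly on $\overline Q$ (by uniform convergence of $g_k$ and continuity of $\Phi$), while $Df_{j,k}\to Df_j$ weak$^*$ as Radon measures; since $\nabla\ff$ is continuous with compact support in $Q$, the pairings pass to the limit. On the left, the compact set $\spt\eta$ has positive distance from the compact set $g(\partial Q)$, so by Lemma \ref{lemma:TopologicalDegree} we have $\deg(g_k,Q,z) = \deg(g,Q,z)$ for every $z\in\spt\eta$ once $k$ is large enough, and convergence of the degree integrals is then trivial.

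The main obstacle is the bookkeeping in the smooth case: one must verify that the nine-term adjugate expansion collapses precisely into the cross-product expression on the right of the claimed identity, relying crucially on the simple form of $g$ (only the first two coordinates are nontrivial) and on $\Phi_3=0$. The mollification step is routine once the smooth case is in hand, and the hypothesis $|g(\partial Q)| = 0$ is not actually required for the argument sketched here (it will be needed when this lemma is later combined with integration of the degree over all $z$).
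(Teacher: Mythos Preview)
Your proposal is correct and follows essentially the same approach as the paper. The only cosmetic difference is that the paper writes the Piola identity directly in cross-product form, $\dive\bigl(\Phi_1\circ g\,\nabla g_2\times\nabla g_3+\Phi_2\circ g\,\nabla g_3\times\nabla g_1+\Phi_3\circ g\,\nabla g_1\times\nabla g_2\bigr)=((\dive\Phi)\circ g)\,J_g$, and then specializes using $\Phi_3=0$, $\nabla g_3=\e_3$, while you write the same identity in adjugate-matrix form and expand; the integration by parts, the use of $\ff=1$ on $\{\eta\circ g\ne 0\}$, the degree formula, and the mollification passage to the limit are identical in both arguments, and your observation that the hypothesis $|g(\partial Q)|=0$ is not actually used here is also consistent with the paper's proof.
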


\begin{proof}
In this proof we don't need invertibility. Thus we may use an  approximation argument and assume first  that $g$ is smooth.
Then a direct computation together with interchangeability of second derivatives gives 
$$
\dive\bigl(
\Phi_1\circ g\;\nabla g_2\times\nabla g_3
+\Phi_2\circ g\;\nabla g_3\times\nabla g_1
+\Phi_3\circ g\;\nabla g_1\times\nabla g_2\bigr)=((\dive\Phi)\circ g)\;J_g
$$
so that (taking into account that $\dive\Phi=\eta$,
$\Phi_3=0$ and $\nabla g_3=\e_3$), the degree formula \eqref{changeofvariables} yields
$$
\aligned
\int_{\er^3}\eta(z)\,\deg(g,Q,z)\,dz
&=\int_{Q}\eta(g(x))\,J_g(x)\,dx
=\int_{Q}\eta(g(x))\,\ff(x)\,J_g(x)\,dx
\\
&=\int_Q\ff(x)\dive\bigl(\Phi_1\circ g\;\nabla g_2\times\nabla g_3
+\Phi_2\circ g\;\nabla g_3\times\nabla g_1\bigr)\; dx
\\
&=\int_{Q}\Phi_1\circ g\;\nabla g_2\cdot  \nabla\ff\times\e_3\,dx
-\int_{Q}\Phi_2\circ g\;\nabla g_1\cdot \nabla\ff\times\e_3\,dx.
\endaligned
$$
Passing to the limit with convolution approximations of true $g$ we obtain 
the required formula.
\end{proof}

\begin{lemma}\label{l:integrable}  Let $Q\subset\subset \Omega$ be a cube such that $|g(\partial Q)|=0$.
 Then
the function $\deg(g,Q,\cdot)$ is integrable.
\end{lemma}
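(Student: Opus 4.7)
My plan is to mimic the duality argument of Lemma \ref{l:integrablea}, now using Lemma \ref{l:formula} in place of Lemma \ref{l:formulaa}. For any test function $\eta\in\D(\er^3)$ with $|\eta|\le 1$ and $\spt\eta\subset\er^3\setminus g(\partial Q)$, Lemma \ref{l:formula} expresses $\int_{\er^3}\eta\,\deg(g,Q,z)\,dz$ as a pairing between the BV derivatives $Df_1,Df_2$ and the product $(\Phi\circ g)\,\nabla\varphi\times\e_3$, where $\Phi$ satisfies $\dive\Phi=\eta$ with $\Phi_3=0$, and $\varphi\in\D(Q)$ is a cutoff equal to $1$ on $\{\eta\circ g\ne 0\}$. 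The goal is to bound this pairing uniformly in $\eta$ and pass to the supremum.

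To keep $\|\Phi\|_\infty$ uniform in $\eta$, I will solve the divergence equation slice by slice in two dimensions: set $\Phi_3\equiv 0$ and $(\Phi_1,\Phi_2)(\cdot,\cdot,z_3)=K*\eta(\cdot,\cdot,z_3)$ using the 2D kernel $K$ from Lemma \ref{l:Lp}. Then $D_1\Phi_1+D_2\Phi_2=\eta$, and from \eqref{inftyest}, choosing $R$ so that $g(\overline Q)\subset B(0,R)$, one obtains $\|\Phi\|_\infty\le CR^{1/2}\|\eta(\cdot,\cdot,z_3)\|_{L^3(\er^2)}\le CR^{7/6}$ uniformly in such $\eta$. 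For the cutoff, mimicking the proof of Theorem \ref{t:twodim}, I pick $\rho<r$ with $\overline{\{\eta\circ g\ne 0\}}\subset Q(\bar x,\rho)$ and $\varphi\in\D(Q)$ satisfying $\varphi=1$ on $Q(\bar x,\rho)$ and $|\nabla\varphi|\le C/(r-\rho)$. Since $\nabla\varphi\times\e_3=(\partial_2\varphi,-\partial_1\varphi,0)$, only the horizontal derivatives of $\varphi$ enter the formula, and the resulting bound is
\[
\Bigl|\int_{\er^3}\eta\,\deg(g,Q,z)\,dz\Bigr|\le C\|\Phi\|_\infty\,\frac{|Df|(Q\setminus Q(\bar x,\rho))}{r-\rho}.
\]

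Taking the supremum over admissible $\eta$, combined with $|g(\partial Q)|=0$ and the fact that $\deg(g,Q,\cdot)\equiv 0$ outside $g(\overline Q)$, then upgrades to $\deg(g,Q,\cdot)\in L^1(\er^3)$. The main obstacle is that the bound features the ratio $|Df|(Q\setminus Q(\bar x,\rho))/(r-\rho)$, which must remain bounded as $\rho\nearrow r$---this is precisely the Lipschitz-type condition \eqref{lim3a} that featured in the 2D argument. As pointed out in the Remark following Theorem \ref{t:twodim}, this finiteness holds for almost every cube, so the lemma applies within the ``sufficiently rich'' subcollection of cubes alluded to at the start of Section \ref{s:Adj}; the hypothesis $|g(\partial Q)|=0$ alone is not quite enough to absorb the boundary terms produced by the cutoff, and some such Fubini-type refinement appears unavoidable.
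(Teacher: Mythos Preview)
Your argument does not establish the lemma as stated: you yourself concede at the end that the bound you obtain depends on the ratio $|Df|(Q\setminus Q(\bar x,\rho))/(r-\rho)$, and hence your proof only covers cubes satisfying the additional condition \eqref{good3}, not every cube with $|g(\partial Q)|=0$. Retreating to ``almost every cube'' is a genuine weakening of the statement.

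The paper's proof takes a completely different, and much shorter, route that avoids this difficulty entirely. The key observation you are missing is the standing hypothesis of Section~\ref{s:Adj} that $f^{-1}\in\BVd$. Writing $h(y)=(y_1,y_2,(f^{-1})_3(y))$, one checks directly that $g=h\circ f$. Since $f$ is a homeomorphism (degree $\pm1$), the degree composition formula (Lemma~\ref{composition}) gives $\deg(g,Q,\cdot)=\deg(h,f(Q),\cdot)$. But $h$ is exactly of the form treated in Section~\ref{s:grad}, so Lemma~\ref{l:integrablea} applies to $h$ on the open set $f(Q)$ and yields integrability immediately, with the clean bound $|D(f^{-1})_3|(f(Q))$ and no boundary ratio whatsoever. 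Thus the invertibility assumption, which your approach ignores, is precisely what makes the lemma hold under the sole hypothesis $|g(\partial Q)|=0$.
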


\begin{proof}
It is easy to see that mappings
$$
g(x)=(f_1(x),f_2(x),x_3)\text{ and }h(y)=(y_1,y_2,(f^{-1})_3(y))
$$
satisfy $g=h\circ f$ and that the degree of a homeomorphism $f$ is $1$. 
By the degree composition formula Lemma \ref{composition}, we thus have 
$\deg(g,Q,\cdot)=\deg(h,f(Q),\cdot)$. 
Now, the conclusion follows from Lemma \ref{l:integrablea}.
\end{proof}

\begin{definition}\label{d:good} 
Let $\bar x_i\in\er$, $i\in\{1,2,3\}$.
We say that $H=H_{i,\bar x_i}:=\{x\ccolon x_i=\bar x_i\}$ is a \textit{good plane}
if the following properties hold:
\eqn{good1}
$$
|Df|(H\cap \Omega)=0,\ |\Adj Df|(H\cap \Omega)=0.
$$
\eqn{good2}
$$
|g(H\cap \Omega)|=0.
$$
\eqn{good3}
$$
\limsup_{r\to 0} \frac{|Df|\bigr(\Omega\cap \{x\ccolon |x_i-\bar x_i|<r\}\bigl)
}{r}<\infty.
$$
\eqn{good4}
$$
\aligned
\text{If }i=1, &\\
\limsup_{r\to 0} &\frac{|Df(\cdot,\cdot,x_3)|
 \bigl(\Omega\cap((\bar x_1-r,\bar x_1+r)\times \er\times \{x_3\})\bigr)
 }{r}<\infty
\\&
\text{ for a.e. }x_3\in \er.
\endaligned
$$
\eqn{good5}
$$
\aligned
\text{If }i=2, &\\
\limsup_{r\to 0} &\frac{|Df(\cdot,\cdot,x_3)|
 \bigl(\Omega\cap(\er\times(\bar x_2-r,\bar x_2+r)\times \{x_3\})\bigr)
 }{r}<\infty
\\&
\text{ for a.e. }x_3\in \er.
\endaligned
$$
We say that a cube $Q(\bar x,r)\subset\er^3$ is a \textit{good cube}
if all its faces are subsets of good planes.

It is obvious that almost all $\bar x_i$ satisfy
\eqref{good1} and \eqref{good3}. The validity of \eqref{good2} and \eqref{good4} -- \eqref{good5}
for almost all $\bar x_i$ will be verified in Lemma 
\ref{l:measzero} and Lemma \ref{l:fubini}.

Now, consider $\bar z\in\er^3$ such that for each $i=1,2,3$ and each dyadic rational
$q$, the plane $\{x\ccolon x_i=\bar z_i+q\}$ is good. We see that almost each $\bar z\in\er^3$ has this property. It follows that we can consider arbitrarily
fine regular translated-dyadic partitions of $\er^3$ consisting of good cubes.
For simplicity (and without loss of generality), we assume that
the origin of coordinates has the property described above and thus all
dyadic cubes $\{(2^{-k}z_1,2^{-k}(z_1+1))\times(2^{-k}z_2,2^{-k}(z_2+1))\times (2^{-k}z_3,2^{-k}(z_3+1))\}$,
$z\in \zet^3$, are 
good.
\end{definition}

\begin{lemma}\label{l:measzero}
Almost every $\bar x_i\in\er$ satisfies \eqref{good2}.
\end{lemma}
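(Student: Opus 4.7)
The plan is to treat the direction $i=3$ trivially and to handle $i\in\{1,2\}$ by one-dimensional $BV$ slicing combined with two applications of Fubini.

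For $i=3$ every $\bar x_3\in\er$ works: since $g$ preserves the third coordinate, $g(H_{3,\bar x_3}\cap\Omega)\subset\er^2\times\{\bar x_3\}$, which has zero three-dimensional Lebesgue measure.

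For $i=1$ (the case $i=2$ is entirely symmetric, with the roles of $x_1$ and $x_2$ exchanged), the first step is to invoke the classical one-dimensional slicing theorem for $BV$ functions in the direction $e_2$ (the tool that underlies Subsection \ref{ss:slicing}). This yields a Lebesgue null set $B\subset\er^2$ such that for every $(x_1,x_3)\notin B$ the mapping
$$
\gamma_{x_1,x_3}\ccolon x_2\mapsto (f_1(x_1,x_2,x_3),\,f_2(x_1,x_2,x_3))
$$
is of bounded variation on the open set $I_{x_1,x_3}\ccolonequals\{x_2\ccolon(x_1,x_2,x_3)\in\Omega\}$. The second step is to observe that the image of any $BV$ curve on an open subset of $\er$ has finite one-dimensional Hausdorff measure on each connected component, via $\haus^1(\gamma(I))\le|D\gamma|(I)<\infty$, and therefore has zero two-dimensional Lebesgue measure in $\er^2$.

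Putting these ingredients together, I would fix $\bar x_1$ and set $A(\bar x_1)\ccolonequals g(H_{1,\bar x_1}\cap\Omega)$; this set is $\sigma$-compact as the continuous image of the $\sigma$-compact open set $H_{1,\bar x_1}\cap\Omega$, hence Borel measurable. Its horizontal slice at height $t$ is $\gamma_{\bar x_1,t}(I_{\bar x_1,t})\times\{t\}$, and by the previous paragraph it has zero two-dimensional Lebesgue measure whenever $(\bar x_1,t)\notin B$. A first Fubini argument applied to $B$ shows that for a.e.\ $\bar x_1$ the section $\{t\ccolon(\bar x_1,t)\in B\}$ is $1$-null; a second Fubini step then yields $|A(\bar x_1)|=0$ for every such $\bar x_1$.

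I do not foresee any serious obstacle: the only mildly delicate point is the twofold exchange of orders of integration, which is routine once measurability of $A(\bar x_1)$ is granted by $\sigma$-compactness. The $BV$ hypothesis enters the argument only through the dimensional inequality $\haus^1(\gamma(I))\le|D\gamma|(I)<\infty$, which is what kills the two-dimensional Lebesgue measure of each horizontal slice of $A(\bar x_1)$.
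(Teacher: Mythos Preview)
Your argument is correct and takes a genuinely different route from the paper's. The paper treats all three directions $i$ uniformly: it invokes \cite[Theorem 3.1]{HKL} to get $\haus^2(f(H_{i,\bar x_i}\cap\Omega))<\infty$ for a.e.\ $\bar x_i$, then applies Lemma~\ref{l:msz} with $u=(f^{-1})_3$ to deduce that the graph of $(f^{-1})_3$ over $f(H\cap\Omega)$ has $\haus^3$-measure zero, and finally projects out $f_3$ to obtain $|g(H\cap\Omega)|=0$. In particular, the paper's proof uses the continuous inverse $f^{-1}$ in an essential way.

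Your approach instead exploits the special structure of $g$ (it preserves the third coordinate), which lets you dispose of $i=3$ trivially and reduce $i\in\{1,2\}$ to the elementary fact that a continuous $BV$ curve in $\er^2$ has planar measure zero, combined with two applications of Fubini. This is more self-contained: it needs neither the external result from \cite{HKL} nor the existence of $f^{-1}$, so it actually proves the lemma for any continuous $f\in\BVd(\Omega,\er^3)$, not only for bi-$BV$ homeomorphisms. The paper's argument, on the other hand, is symmetric in the coordinate directions and reuses Lemma~\ref{l:msz}, which fits its overall machinery more neatly. Both are short; yours is the more elementary and slightly more general.
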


\begin{proof} By \cite[Theorem 3.1]{HKL}, 
for almost every $\bar x_i\in\er$ we have $\haus^2 (f(H))<\infty$, where
$H=\{x\ccolon x_i=\bar x_i\}$. Pick such $\bar x_i$. 
Let $K\subset H$ be a compact 
set.
Then $\haus^2 (f(K))<\infty$ and by Lemma \ref{l:msz} for $u=(f^{-1})_3$ we have 
$$
\haus^3\bigl(\{(f_1(x),f_2(x),f_3(x),x_3):\ x\in K\}\bigr)=0.
$$
Hence 
$$
0=\haus^3\bigl(\{(f_1(x),f_2(x),x_3):\ x\in K\}\bigr) =|g(K)|.
$$
\end{proof}

\begin{lemma}\label{l:fubini}
Almost every $\bar x_1\in \er$ satisfies
\eqref{good4} and almost every  $\bar x_2\in \er$ satisfies
\eqref{good5}.
\end{lemma}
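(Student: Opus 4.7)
The plan for \eqref{good4} is to combine the slicing identities of Subsection \ref{ss:slicing} with the 1D Lebesgue differentiation theorem, and then swap quantifiers via Fubini; the argument for \eqref{good5} is identical after interchanging the roles of $x_1$ and $x_2$.

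First I would slice the finite Radon measures $|D_1 f|$ and $|D_2 f|$ by $x_3$-planes, which is the only slicing available here (note that $|D_3 f|$ cannot be sliced this way, which is exactly why \eqref{good4} involves only the tangential gradient). This produces, for a.e.\ $x_3 \in \er$, a finite Radon measure $\mu_{x_3}$ on the 2D slice of $\Omega$ at height $x_3$; this $\mu_{x_3}$ represents (and controls up to equivalent constants) the measure $|Df(\cdot,\cdot,x_3)|$ of \eqref{good4} and satisfies
$$
(|D_1 f| + |D_2 f|)(A \times B) = \int_B \mu_{x_3}(A) \, dx_3
$$
for all Borel $A \subset \er^2$ and $B \subset \er$. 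Pushing $\mu_{x_3}$ forward to the $x_1$-axis by $(x_1, x_2) \mapsto x_1$ yields a finite Radon measure $\nu_{x_3}$ on $\er$. The Lebesgue decomposition combined with the 1D Lebesgue differentiation theorem (the singular part of a finite Radon measure vanishes on small neighborhoods of Lebesgue-typical points, while the absolutely continuous part has finite density at a.e.\ point) then gives, for each such $x_3$,
$$
\limsup_{r \to 0^+} \frac{\nu_{x_3}((\bar x_1 - r, \bar x_1 + r))}{r} < \infty \text{ for Lebesgue-a.e.\ } \bar x_1 \in \er.
$$

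Set $N := \{(\bar x_1, x_3) \in \er^2 : \limsup_{r \to 0^+} \nu_{x_3}((\bar x_1 - r, \bar x_1 + r))/r = +\infty\}$. Restricting the limsup to positive rational $r$ (valid by monotonicity of $r \mapsto \nu_{x_3}((\bar x_1 - r, \bar x_1 + r))$) together with joint Borel measurability of $(\bar x_1, x_3) \mapsto \nu_{x_3}((\bar x_1 - r, \bar x_1 + r))$ for each fixed $r > 0$ shows that $N$ is Borel. The previous step shows that the section of $N$ at fixed $x_3$ is Lebesgue-null for a.e.\ $x_3$, so Fubini yields $\lambda_2(N) = 0$; hence for a.e.\ $\bar x_1 \in \er$, the section $\{x_3 : (\bar x_1, x_3) \in N\}$ is Lebesgue-null, which is exactly the statement of \eqref{good4}.

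The main technical point is the joint measurability of $(\bar x_1, x_3) \mapsto \nu_{x_3}((\bar x_1 - r, \bar x_1 + r))$ at fixed $r$, but this is routine: for each bounded Borel $A \subset \er^2$ the function $x_3 \mapsto \mu_{x_3}(A)$ is Lebesgue measurable by the slicing identity above, and joint measurability follows by the semicontinuity of the mass of a sliding strip in $\bar x_1$ combined with a standard monotone class argument.
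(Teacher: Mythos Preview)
Your argument is correct and follows essentially the same route as the paper's: slice the tangential variation by $x_3$, apply one-dimensional differentiation of the resulting finite measure in the $x_1$-direction, and then swap quantifiers via Fubini after establishing joint measurability. The paper packages the same idea through the cumulative distribution $\psi(x_1,x_3)=|Df(\cdot,\cdot,x_3)|((0,x_1)\times(0,1)\times\{x_3\})$, obtaining measurability as a countable supremum over a dense family of test functions and invoking monotone differentiation in place of your Lebesgue-decomposition argument; the two are equivalent, and your pushforward $\nu_{x_3}$ is precisely the measure whose CDF is the paper's $\psi(\cdot,x_3)$.
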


\begin{proof}
It is enough to consider \eqref{good4}.
We may assume that $\Omega$ is the cube $(0,1)^3$. 
We consider the function 
$$
\psi(x_1,x_3)=|Df(\cdot,\cdot,x_3)|((0,x_1)\times (0,1)\times\{x_3\}).
$$
It can be rewritten as
$$
\psi(\bar x_1,x_3)=
\sup_{j\in\en}
\iint_{(0,\bar x_1)\times(0,1)}f(x_1,x_2,x_3)\dive \ff_j(x_1,x_2)\,dx_1\,dx_2,
$$
where $\{\ff_j\}$ is a dense sequence in the collection of 
all $\ff\in\D((0,1)^2,\er^2)$ with $\sup_{(0,1)^2}|\ff|\le 1$.
Therefore $\psi$ is measurable. Since $\psi$ is 
is increasing in $x_1$, we can express the upper partial derivative
of $\psi$ at $(x_1,x_3)$ with respect to $x_1$ as
$$
\overline D_1\psi(x_1,x_3)=\inf_{m\in\en}\sup_{q\in\qe\cap (-\frac1m,\frac1m)\setminus \{0\}}
\frac{\psi(x_1+q,x_3)-\psi(x_1,x_3)}{q},
$$
where $\qe$ is the set of all rationals, similarly for the lower partial derivative.
It follows that the set where the partial derivative of $\psi$ at $(x_1,x_3)$ with respect to $x_1$
exists is measurable.
Taking into account again that $\psi$ is increasing in $x_1$, we infer that
there exists a set $N\subset \er^2$
of measure zero
such that
the  partial derivative $\frac{\partial\psi}{\partial x_1}$
exists outside $N$. Now, \eqref{good4} is satisfied at $\bar x_1$ if 
the one-dimensional measure of $N\cap (\{\bar x_1\}\times \er)$ is 
zero, which is true for a.e.\ $\bar x_1$ by the Fubini theorem.
\end{proof}

\subsection{Construction}\
Let  $\bar x\in \Omega$ and $0<r<r_0=\dist(\bar x,\partial\Omega)$. 
Let $Q=Q(\bar x,r)$ be a good cube.
Let $\eta_0$ be a smooth function with compact support such that
$0\le \eta_0\le 1$ and $\eta_0=1$ on $g(\Omega)$. 
As in the proof of Theorem \ref{t:twodim},
consider a 
sequence $\eta_k$ of smooth functions such that $\eta_k=0$ on a neighborhood of
$g(\partial Q)$, $k=1,2,\dots$,
$0\le \eta_1\le \eta_2\le \dots\le \eta_0$ and 
$\eta_k\to\eta_0$ a.e. 
Let $K$ be as in \eqref{howK}. 
Set
$$
 \Phi^{(k)}=K*\eta_k,\qquad k=0,1,2,\dots.
$$
Then 
$$
D_1\Phi_1^{(k)}+D_2\Phi_2^{(k)}=\eta_k \quad \text{ on }\overline{g(\Omega)}.
$$
Further,
for almost every $x_3\in\er$, $\eta_k\to\eta_0$ in 
$L^3(\er^2\times\{x_3\})$. 
From Lemma \ref{l:Lp} we obtain that
$\Phi^{(k)}(z)\to \Phi$ 
uniformly
on $\overline {g(\Omega)}\cap (\er^2\times\{x_3\})$. 
Next, we find $\rho_k\nearrow r$ such that 
$$
\eta_k=0 \text{ on }g(Q\setminus Q(\bar x,\rho_k))
$$
and test functions $\ff_k\in\D(Q)$ such that $0\le \ff_k\le 1$,
$\ff_k=1$ on $ Q(\bar x,\rho_k)$ and 
$$
|\nabla\ff_k|\le \frac{C}{r-\rho_k}.
$$

\begin{lemma}\label{l:tezsi}
Let  $\bar x\in \Omega$ and $0<r<r_0=\dist(\bar x,\partial\Omega)$. 
Let $Q=Q(\bar x,r)$ be a good cube.
Then
$$
\lim_{k\to\infty}\Bigl(
\langle Df_2,\ (\Phi_1^{(k)}\circ g) \; \nabla\ff_k\times\e_3\rangle
-\langle Df_1,\ (\Phi_2^{(k)}\circ g) \; \nabla\ff_k\times\e_3\rangle
\Bigr)
=\ADJ_{33} Df (Q).
$$
\end{lemma}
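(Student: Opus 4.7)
The strategy is to reduce the three-dimensional limit to a family of two-dimensional limits, one on each slice $\{x_3=t\}$, and then to apply Lemma \ref{l:atx1} slicewise. Writing $Q_t\subset\er^2$ for the two-dimensional cross section of $Q$ at height $t$, and expanding $\nabla\ff_k\times\e_3=(D_2\ff_k,-D_1\ff_k,0)$, the quantity inside the limit becomes a sum of four terms each of the shape $\langle D_i f_m,\,(\Phi^{(k)}_\ell\circ g)\,D_j\ff_k\rangle$ with $i,j,\ell,m\in\{1,2\}$. By the slicing identity recalled in Subsection \ref{ss:slicing}, each such term equals
$$\int_{\er}\Bigl\langle D_i f_m(\cdot,\cdot,t),\,(\Phi^{(k)}_\ell(\cdot,\cdot,t)\circ g_t)\,D_j\ff_k(\cdot,\cdot,t)\Bigr\rangle\,dt,$$
where $g_t(y_1,y_2):=(f_1(y_1,y_2,t),f_2(y_1,y_2,t))$ and the inner duality is the two-dimensional one on $Q_t$.

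For almost every $t$ the slice $g_t$ lies in $BV(\Omega_t,\er^2)$ with $|g_t(\partial Q_t)|=0$ (by \eqref{good2}); the two-dimensional distributional Jacobian $\Det Dg_t$ is a signed Radon measure on $\Omega_t$ (by Proposition \ref{aapo}, using the standing assumption $\Adj Df\in\M$); the quantitative BV bound required by \eqref{lim3} of Lemma \ref{l:atx1} is inherited from the good-cube conditions \eqref{good4}--\eqref{good5}. The planar functions $\Phi^{(k)}(\cdot,\cdot,t)=K*\eta_k(\cdot,\cdot,t)$ satisfy $D_1\Phi^{(k)}_1+D_2\Phi^{(k)}_2=\eta_k$ on $\overline{g_t(Q_t)}$ and converge to $\Phi(\cdot,\cdot,t)$ uniformly on $\overline{g_t(Q_t)}$ (by Lemma \ref{l:Lp} and the $L^3$ convergence $\eta_k(\cdot,\cdot,t)\to\eta_0(\cdot,\cdot,t)$, valid for a.e.\ $t$). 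Lemma \ref{l:atx1} therefore applies on each such slice, and the integrand in $t$ converges pointwise to $\Det Dg_t(Q_t)$.

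Finally, I interchange limit and $t$-integral by dominated convergence. Corollary \ref{c:Nest} applied slicewise produces a $k$-uniform pointwise bound
$$\bigl|(\text{slice integrand})(t)\bigr|\;\le\;C\int_{\er^2}|\deg(g_t,Q_t,z')|\,dz',$$
and the identity $\deg(g,Q,(z_1,z_2,z_3))=\deg(g_{z_3},Q_{z_3},(z_1,z_2))$ for almost every $z$, valid because $g_3(x)=x_3$, combined with Fubini and Lemma \ref{l:integrable}, gives
$$\int_{\er}\!\int_{\er^2}|\deg(g_t,Q_t,z')|\,dz'\,dt=\int_{\er^3}|\deg(g,Q,z)|\,dz<\infty.$$
Thus the desired limit equals $\int_{\er}\Det Dg_t(Q_t)\,dt$, which by Definition \ref{slicedef} is precisely $\ADJ_{33}Df(Q)$.

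The main obstacle is manufacturing the $L^1(\er)$ dominant needed in the third step: the slicewise estimate from Corollary \ref{c:Nest} controls the slice integrand by $\int|\deg(g_t,Q_t,\cdot)|$, and its $t$-integrability cannot be read off any pointwise BV estimate. It is only via the special structure $g(x)=(f_1(x),f_2(x),x_3)$, under which the slice degrees coincide with the full three-dimensional degree, that this integrability is extracted from Lemma \ref{l:integrable}. A secondary but not negligible point is verifying the hypotheses of Lemma \ref{l:atx1} on a set of slices $t$ of full measure, which uses the good-cube conditions \eqref{good1}--\eqref{good5} together with Proposition \ref{aapo} to guarantee $\Det Dg_t\in\M(\Omega_t)$ for a.e.\ $t$.
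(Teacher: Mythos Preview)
Your argument is correct and follows the same overall architecture as the paper's proof: slice in $x_3$, apply Lemma~\ref{l:atx1} on almost every slice to get pointwise convergence, and pass the limit through the $t$-integral by dominated convergence using the slicewise bound from Corollary~\ref{c:Nest}. You are also more explicit than the paper in checking that $\Det Dg_t\in\M(\Omega_t)$ for a.e.\ $t$, which is indeed a hypothesis of Lemma~\ref{l:atx1}; this follows, as you say, from Proposition~\ref{aapo} once one knows $\Adj Df\in\M$, and the latter is a consequence of the standing assumption $f^{-1}\in\BVd$ in Section~\ref{s:Adj} via Theorem~\ref{regularity}.

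The genuine difference is in how the $L^1(dt)$ dominant is produced. The paper bounds $\int_{\er^2}|\deg(g_t,Q_t,\cdot)|$ by the multiplicity integral and then by $\haus^2\bigl(f(Q_t\times\{t\})\bigr)$, appealing to \cite[Lemma 6.1]{HKL}, \cite[Theorem 7.7]{Mattila}, and \cite[Theorem 3.1]{HKL} for the integrability in $t$. You instead observe that, because $g_3(x)=x_3$, one has $\deg(g,Q,(z',t))=\deg(g_t,Q_t,z')$ for a.e.\ $(z',t)$, so Fubini and Lemma~\ref{l:integrable} give $\int\!\!\int|\deg(g_t,Q_t,\cdot)|\,dt<\infty$ directly. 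Your route is more self-contained within the paper and avoids the external references; the paper's route has the minor advantage that the degree--multiplicity--area chain is entirely elementary, whereas your slice-degree identity, while standard, deserves a one-line justification (mollify $f_1,f_2$ to reduce to the smooth case, where the Jacobian of $g$ is the $2\times2$ Jacobian of $g_t$, and then use stability of both degrees under uniform approximation).
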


\begin{proof}
We interpret the symbols like $\adj_{ij}Df(\cdot,\cdot,x_3)$ so that the differential operator 
is applied to the function of two variables $y\mapsto f(y_1,y_2,x_3)$.
By Lemma \ref{l:atx1}, Fubini theorem, \eqref{good4} and \eqref{good5} (recall that $Q$ is a good cube) 
\eqn{longcomp}
$$
\aligned
&\langle Df_2,\ (\Phi_1^{(k)}\circ g) \; \nabla\ff_k\times\e_3\rangle
-\langle Df_1,\ (\Phi_2^{(k)}\circ g) \; \nabla\ff_k\times\e_3\rangle
\\&\quad
=
\int_{\bar x_3-r}^{\bar x_3+r}
\Bigl(
\langle Df_2(\cdot,\cdot,x_3),\ (\Phi_1^{(k)}\circ g) \; \nabla\ff_k(\cdot,\cdot,x_3)\times\e_3\rangle
\\&\qquad-\langle Df_1(\cdot,\cdot,x_3),\ (\Phi_2^{(k)}\circ g) \; \nabla\ff_k(\cdot,\cdot,x_3)\times\e_3\rangle
\Bigr)\,dx_3
\\&\quad
=
-\int_{\bar x_3-r}^{\bar x_3+r}
\sum_{i,j=1}^2\big\langle \adj_{ij}Df(\cdot,\cdot,x_3)),(\Phi_j^{(k)}\circ g) \; D_i(\ff_k(\cdot,\cdot,x_3))\big\rangle
\,dx_3
\\&\quad
\\&\quad
\to
\int_{\bar x_3-r}^{\bar x_3+r}\Det(Df_1(\cdot,\cdot,x_3),Df_2(\cdot,\cdot,x_3))(Q)\,dx_3
=\ADJ_{33} Df (Q).
\\&\quad
\endaligned
$$
To justify the passage to limit 
in \eqref{longcomp}
under the integral sign we need the 
pointwise convergence a.e., which is verified by Lemma \ref{l:atx1},
and a convergent majorant.
By Corollary \ref{c:Nest},
for almost each $x_3\in (\bar x_3-r,\,\bar x_3+r)$ we have
$$
\aligned
&
\Bigl|
\langle Df_2(\cdot,\cdot,x_3),\ (\Phi_1^{(k)}\circ g) \; \nabla\ff_k(\cdot,\cdot,x_3)\times\e_3\rangle
\\&\qquad
-\langle Df_1(\cdot,\cdot,x_3),\ (\Phi_2^{(k)}\circ g) \; \nabla\ff_k(\cdot,\cdot,x_3)\times\e_3
\rangle
\Bigr|
\\
&\quad\le C\int_{\er^2} \bigl|\deg((f_1(\cdot,\cdot,x_3),f_2(\cdot,\cdot,x_3)),
Q_2((\bar x_1,\bar x_2),r),(y_1,y_2))\bigr|\,dy_1\,dy_2\\
&\quad\le C\int_{\er^2} N((f_1(\cdot,\cdot,x_3),f_2(\cdot,\cdot,x_3)),
Q_2((\bar x_1,\bar x_2),r),(y_1,y_2))\,dy_1\,dy_2\\
&\quad\le C\haus^2 \bigl(f(Q_2((\bar x_1,\bar x_2),r)\times \{x_3\})\bigr),
\endaligned
$$
where the estimate of degree by multiplicity is from \cite[Lemma 6.1]{HKL}. 
For the last inequality see \cite[Theorem 7.7]{Mattila}.
From
\cite[Theorem 3.1]{HKL} we deduce that the function
$$
x_3\mapsto \haus^2 \bigl(f(Q_2((\bar x_1,\bar x_2),r)\times \{x_3\})\bigr)
$$
is integrable over $(\bar x_3-r,\bar x_3+r)$.
\end{proof}

\begin{thm}\label{t:Adj=deg}
Let $Q\subset\subset\Omega$ be a good cube.
Then
$$
\int_{\er^3}\deg (g,Q,z)\,dz=\Adj_{33} Df (Q).
$$
\end{thm}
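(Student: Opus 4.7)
The plan is to combine the degree identity from Lemma \ref{l:formula} with the adjugate limit from Lemma \ref{l:tezsi}, employing the approximating triples $(\eta_k, \Phi^{(k)}, \varphi_k)$ built in the construction just before Lemma \ref{l:tezsi}.

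First, for each $k$ I would verify that $\eta = \eta_k$, $\Phi = \Phi^{(k)}$, $\varphi = \varphi_k$ satisfy the hypotheses of Lemma \ref{l:formula}. Since $Q$ is a good cube, property \eqref{good2} applied to each of its six faces gives $|g(\partial Q)| = 0$. By construction $\spt \eta_k \cap g(\partial Q) = \emptyset$, the third component $\Phi^{(k)}_3$ vanishes, and the two-dimensional identity of Lemma \ref{l:Lp} applied on each slice $\er^2 \times \{x_3\}$ yields $D_1 \Phi^{(k)}_1 + D_2 \Phi^{(k)}_2 = \eta_k$ on $\er^3$. Moreover $\{\eta_k \circ g \neq 0\} \subset Q(\bar x, \rho_k)$, on which $\varphi_k \equiv 1$. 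Applying Lemma \ref{l:formula} then gives
$$\int_{\er^3} \eta_k(z)\, \deg(g, Q, z)\, dz = \langle Df_2,\, (\Phi^{(k)}_1 \circ g)\, \nabla\varphi_k \times \e_3\rangle - \langle Df_1,\, (\Phi^{(k)}_2 \circ g)\, \nabla\varphi_k \times \e_3\rangle$$
for every $k$.

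Next, I would pass to the limit $k \to \infty$ on both sides. The right-hand side converges to $\ADJ_{33} Df(Q)$ by Lemma \ref{l:tezsi}, and since our standing hypothesis that $f^{-1} \in \BVd(f(\Omega), \er^3)$ guarantees (via \cite{HKL}) that $\Adj Df \in \M$, Proposition \ref{aapo} identifies this limit with $\Adj_{33} Df(Q)$. For the left-hand side, Lemma \ref{l:integrable} gives $\deg(g, Q, \cdot) \in L^1(\er^3)$ with $\deg(g, Q, z) = 0$ for $z \notin g(\overline Q)$; since $\eta_k \nearrow \eta_0 \equiv 1$ on $g(\Omega)$ and $|g(\partial Q)| = 0$, the integrands $\eta_k\, \deg(g, Q, \cdot)$ converge almost everywhere to $\deg(g, Q, \cdot)$ and are dominated by $|\deg(g, Q, \cdot)|$. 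The Lebesgue dominated convergence theorem then yields the desired identity.

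The bulk of the analytic difficulty is already absorbed into Lemma \ref{l:tezsi}, whose proof produces a uniform $L^1$ majorant (via the Hausdorff-measure bound from \cite[Thm.~3.1]{HKL}) needed to exchange limit and integration in the $x_3$-slicing of the adjugate term. Consequently the remaining work for the present theorem is largely bookkeeping: checking that the good-cube conditions \eqref{good2}, \eqref{good4} and \eqref{good5} line up with the hypotheses of Lemmas \ref{l:formula}, \ref{l:integrable} and \ref{l:tezsi}, and invoking Proposition \ref{aapo} to identify $\ADJ_{33} Df$ with $\Adj_{33} Df$ on $Q$.
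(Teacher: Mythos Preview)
Your proposal is correct and follows essentially the same route as the paper's proof: apply Lemma \ref{l:formula} to each $(\eta_k,\Phi^{(k)},\ff_k)$, pass to the limit on the right via Lemma \ref{l:tezsi} and on the left via dominated convergence using Lemma \ref{l:integrable}, and invoke Proposition \ref{aapo} to identify $\ADJ_{33}Df(Q)$ with $\Adj_{33}Df(Q)$. Your write-up is in fact more explicit than the paper's in checking the hypotheses of Lemma \ref{l:formula} against the good-cube conditions; note, though, that the distributional equality $\ADJ_{33}Df=\Adj_{33}Df$ in Proposition \ref{aapo} holds without assuming $\Adj Df\in\M$, so you do not need to appeal to \cite{HKL} for that identification.
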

\begin{proof}
By Lemma \ref{l:tezsi} and Lemma \ref{l:formula},
$$
\aligned
\Adj_{33} Df (Q)&=
\lim_{k\to\infty}
\Bigl(
\langle Df_2,\ (\Phi_1^{(k)}\circ g) \; \nabla\ff_k\times\e_3\rangle
-\langle Df_1,\ (\Phi_2^{(k)}\circ g) \; \nabla\ff_k\times\e_3\rangle
\Bigr)
\\&=
\lim_{k\to\infty}\int_{\er^3}\eta_k(z)\deg (g,Q,z)\,dz
=\int_{\er^3}\deg ((f_1,f_2,x_3),Q,z)\,dz.
\endaligned
$$
The passage to the limit in the last equality is justified as
$\deg (g,Q,\cdot)$ is integrable by Lemma \ref{l:integrable}.
The equality $\Adj_{33} Df (Q)=\ADJ_{33} Df (Q)$ follows from
Proposition \ref{aapo}.
\end{proof}


\section{The proof of the main results}\label{s:cor}

\subsection{Formula \eqref{main} -- conclusion}

Now we are ready to prove Theorem \ref{t:main}.

\begin{definition}
We say that a set $F$ is a \textit{closed dyadic figure} if
it is a finite union of closed dyadic cubes. An interior of a closed 
dyadic figure is called an \textit{open dyadic figure}.
\end{definition}

\begin{proof}[Proof of Theorem \ref{t:main}] 
Recall that for
symmetry reasons we 
demonstrate 
the proof for $i=j=3$. 
 Let $Q\subset\subset \Omega$ be a good cube.
We apply Theorem \ref{t:Adj=deg} to $f$ and
Theorem \ref{t:D=deg} to the mapping $y\mapsto (y_1,y_2,(f^{-1})_3(y))$. 
 We obtain
\eqn{measure}
$$
D_2(f^{-1})_3(f(U))
=\int_{\er^3}\deg(g\circ f^{-1},f(U),z)\,dz
=\int_{\er^3}\deg(g,U,z)\,dz
=\Adj_{33} Df (U)
$$
for $U=Q$ taking into account the degree composition formula Lemma \ref{composition}, which justifies 
the second equality in \eqref{measure}. 
If $U$ is an open dyadic figure, then $|g(\partial U)|=0$ holds as well and we can use 
the additivity of the degree.
A general open set can be written as the union of an increasing sequence of 
open dyadic figures.

\end{proof}

\subsection{Particular cases}

In this subsection we prove Theorem \ref{cor}.

\begin{proof}[Proof of Theorem \ref{cor}] 
{\sc Step 1:}
We first show that $\Adj_{i,j}Df\in \M(\Omega)$ 
for each $i,j\in \{1,2,3\}$. We  demonstrate this only for $i=j=3$ as other cases are identical.

Assume that (a) holds.
We claim that 
\eqn{pq}
$$
\langle\Adj_{33}Df,\ff\rangle=\int \varphi \det \bigl(D_jf_i\bigr)_{i,j=1,2}\,dx,
\qquad \ff\in \D(\Omega).
$$
This fact demonstrates that $\Adj_{33}Df$ is a measure, as 
$\det \bigl(D_jf_i\bigr)_{i,j=1,2}$ is an measure (it is even an $L^1$ function).
To prove \eqref{pq},
we first assume that $f_1$ and $f_2$ are smooth, then it is just integration by parts.
Next step is to assume that $f_2$ is smooth. By mollification we obtain
a sequence $\{f^{(k)}_2\}_k$ such that $Df^{(k)}_2$ converge weak* to $Df_2$,
so it is easy to observe that \eqref{pq} holds in this case as well.
 
Finally, we use the preceding step and
mollify $f_1$ to obtain a sequence $\{f^{(k)}_1\}_k$ such that 
$Df^{(k)}_1$ converge to $Df_1$, strongly if $p_1<\infty$ or weak* if $p_1=\infty$.
In any case we can conclude that \eqref{pq} holds for the limit function.

Now assume (b) holds. The statement is trivial if $f_1$ and $f_2$ are smooth.
Assume e.g.\ that $f_2$ and $f_3$ are smooth.
We proceed as in the first two steps of (a) replacing \eqref{pq} by
$$
\langle\Adj_{33}Df,\ff\rangle=\langle Df_1, \ff Df_2\times \e_3\rangle.
$$

{\sc Step 2:}
By Theorem \ref{regularity}, 
to prove that $f^{-1}$ has 
bounded variation we need to show that $f$ satisfies the finite Lebesgue area condition of Section \ref{sec:LebArea}. That is,  $f\circ \kappa_i^t$ has finite Lebesgue area for almost every $t$ and every $i=1,2,3.$  Recall that functions $\kappa_i^t$ were defined in the beginning of Section \ref{sec:defs}.
Notice that for almost every $t$ we have $f\circ \kappa_i^t\in \BVd.$ We pick such $t$
and denote $g=(g_1,g_2,g_3)=f\circ \kappa_i^t$. Without loss of generality we may assume that
$g_1\in \BVd$ and $g_2,\,g_3\in C^1(\bar U)$, where $U=(\kappa_i^t)^{-1}(\Omega)\subset\er^2$. 
The finiteness of Lebesgue area under assumption (a) is due to Morrey (see \cite[Section 5.13]{C}). 
Next we show the finiteness under assumption (b). The proof is a simplification of Morrey's proof and the reasoning could easily be modified to the case of (a).



Let $g^k$ be a sequence such that 
$g^k\rightarrow g$ and $Dg^k_i\rightarrow Dg_i,$ $i=2,3$  uniformly and
$|Dg^k_1|(U)\rightarrow|Dg_1|(U)$ (see \cite[Theorem 3.9]{AFP}). 
Then
$$
\int_U|\det (Dg^k_{1},Dg^k_{2})|\leq C \|Dg^k_1\|_1\;\|Dg^k_2\|_{\infty}\le C,
$$
similarly for other choices of coordinates.
Since by
\cite[Sections 5.10 and Section 5.13 Note 2]{C}
$$
L(g)\leq \liminf_{k} L(g^k),
$$
and
$$
L(g^k)\leq\int_U \Bigl(\sum_{1\le i<j\le 3}|\det (Dg^k_{i},Dg^k_{j})|^2\Bigr)^{1/2}\,dy
\le \sum_{1\le i<j\le 3}\int_U |\det (Dg^k_{i},Dg^k_{j})|\,dy,
$$
we have verified that $L(g)<\infty$. 
\end{proof}


\subsection{Absolutely continuous part of $\Adj Df(x)$}
\newdimen\vintkern
\vintkern12pt
\def\vint{{-}\kern-\vintkern\int}

Let $B$ be the unit disc in $\er^2$ and 
$u,v$ are continuous $BV$ functions on $B$. We express $u$ and 
$v$ in polar coordinates, writing 
$$
\bar u(\rho,t)=u(\rho \cos t,\,\rho \sin t),\quad
\bar v(\rho,t)=v(\rho \cos t,\,\rho \sin t).
$$
Then
$$
\int_{\partial B(0,\rho)}u\,dv
$$
is the Riemann-Stieltjes integral 
$$
\int_0^{2\pi}\bar u(\rho,\cdot)\,d\bar v(\rho,\cdot).
$$

\begin{lemma}\label{l:green} Let $h=(u,v)\ccolon B(0,1)\to\er^2$
be a continuous BV mapping. Suppose that $\djac_h\in\M(B(0,1))$.
Then for a.e.\ $\rho\in (0,1)$ we 
have
\eqn{green}
$$
\djac _h(B(0,\rho))=\int_{\partial B(0,\rho)}u\,dv.
$$
\end{lemma}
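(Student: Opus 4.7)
The strategy is to approximate $\chi_{B(0,\rho_0)}$ by smooth radial cutoffs and to relate both sides of \eqref{green} to a common radial integral. Fix $\rho_0\in(0,1)$ and let $\eta_\eps\ccolon\er\to[0,1]$ be smooth, nonincreasing, with $\eta_\eps=1$ on $(-\infty,\rho_0-\eps]$, $\eta_\eps=0$ on $[\rho_0,\infty)$, and $|\eta_\eps'|\le 2/\eps$. Set $\varphi_\eps(x)=\eta_\eps(|x|)$; then $\varphi_\eps\in\D(B(0,1))$ for $\eps$ small, $\varphi_\eps\to\chi_{B(0,\rho_0)}$ boundedly pointwise, and since $\djac_h$ is a finite signed measure, dominated convergence gives
$$
\langle\djac_h,\varphi_\eps\rangle\to\djac_h(B(0,\rho_0))\qquad\text{as }\eps\to 0.
$$

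Next I would rewrite $\langle\djac_h,\varphi_\eps\rangle$ via the integration-by-parts formula \eqref{defdistr}:
$$
\langle\djac_h,\varphi_\eps\rangle
=\int_B u\,D_2\varphi_\eps\,dD_1v - \int_B u\,D_1\varphi_\eps\,dD_2v.
$$
Since $D_j\varphi_\eps(x)=\eta_\eps'(|x|)\,x_j/|x|$, the crucial identity to establish is
$$
\langle\djac_h,\varphi_\eps\rangle = -\int_0^1\eta_\eps'(\rho)\,\Psi(\rho)\,d\rho,\qquad \Psi(\rho):=\int_{\partial B(0,\rho)} u\,dv.
$$
For smooth $v$ this is a direct computation in polar coordinates, based on $x_1\partial_2 v-x_2\partial_1 v=\partial_t\bar v$ together with $dx=\rho\,d\rho\,dt$ and Fubini. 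For the BV case I would mollify $v$ to $v_k$ (keeping $u$ fixed and continuous). The left-hand side passes to the limit since the functions $u\,D_j\varphi_\eps$ are continuous and bounded on $B$ and $Dv_k\to Dv$ weak$^*$ in measures. On the right, I invoke BV slicing in polar coordinates (which follows from Subsection \ref{ss:slicing} after the diffeomorphism $T(\rho,t)=(\rho\cos t,\rho\sin t)$, treating $\bar v$ as a BV function on $(0,1)\times(0,2\pi)$ that is periodic in $t$): for a.e.\ $\rho$, $\bar v_k(\rho,\cdot)$ converges to $\bar v(\rho,\cdot)$ uniformly with bounded variation, so the Riemann--Stieltjes integrals $\Psi_k(\rho)\to\Psi(\rho)$, and the $\rho$-integrands admit an $L^1((0,1))$ majorant by means of $\int_0^1|D_t\bar v(\rho,\cdot)|([0,2\pi])\,d\rho\le C|Dv|(B)$.

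Finally, for a.e.\ $\rho_0\in(0,1)$ the weighted average satisfies $-\int_0^1\eta_\eps'(\rho)\Psi(\rho)\,d\rho\to\Psi(\rho_0)$ by Lebesgue's differentiation theorem, since $-\eta_\eps'(\rho)\,d\rho$ is a probability measure supported in $[\rho_0-\eps,\rho_0]$ with density bounded by $2/\eps$ and $\Psi\in L^1_{\loc}(0,1)$. Combining with the left-hand-side limit yields \eqref{green}. The main obstacle is the BV polar-slicing identity just described: verifying that $\Psi_k(\rho)\to\Psi(\rho)$ on a.e.\ circle under Cartesian mollification is delicate because standard mollification does not respect polar slices. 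The cleanest route is to carry out the entire argument in the pulled-back coordinates $(\rho,t)$, where $\bar v$ is a BV function (periodic in $t$) so that the standard Cartesian slicing theorem of Subsection \ref{ss:slicing} applies directly, and then change variables back.
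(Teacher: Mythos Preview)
Your argument is correct and follows essentially the same route as the paper's proof: test the distributional Jacobian against a radial cutoff $\varphi=\eta(|x|)$, rewrite via \eqref{defdistr}, and use polar slicing of $Dv$ to identify the result with $\int_0^1(-\eta')\Psi(\rho)\,d\rho$. The only cosmetic differences are that the paper uses the layer--cake formula to get $\langle\mu,\varphi\rangle=\int_0^1|\eta'(\rho)|\,\mu(B(0,\rho))\,d\rho$ for a general radial $\eta$ and then ``varies $\eta$'' to conclude, whereas you choose $\eta_\eps$ concentrating at $\rho_0$ and invoke Lebesgue differentiation; and the paper dispatches the polar--slicing step by a direct citation of \cite[Theorem~3.107]{AFP}, while you (correctly) flag the issue with Cartesian mollification and propose pulling back to $(\rho,t)$ coordinates to apply the Cartesian slicing of Subsection~\ref{ss:slicing}, which is precisely the content of that reference.
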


\begin{proof} Let $\eta$ be a smooth function on 
$[0,1]$ such that $\eta'(0+)=\eta'(1-)=0$, $\eta(1)=0$, $\eta(0)=1$ and 
$\eta'<0$ on $(0,1)$. Let $\ff(x)=\eta(|x|)$ and $\mu=\djac_h$.
We have
$$
\aligned
\int_0^1|\eta'(r)|\,\mu(B(0,r))\,dr&=
\int_0^1\mu(\{\ff>t\})\,dt
\\&=
\int_{B(0,1)}\Bigl(\int_0^{\eta(|x|)}\,dt\Bigr)\,d\mu(x)
=\int_{B(0,1)}\ff\,d\mu=
\\&
=-\langle Dv, *u\nabla\ff\rangle
\\&
=\int_0^1|\eta'(r)|\int_{\partial B(\rho)}u\,dv,
\endaligned
$$
where the last equality is obtained by slicing in the polar coordinates, see \cite[Theorem 3.107]{AFP}.
Varying $\eta$ we obtain \eqref{green}.
\end{proof}

\begin{lemma}\label{l:bvconv}
Let $u_j$, $v_j$ be continuous functions on $\partial B(0,\rho)$,
$j=1,2,\dots,\infty$.
Let $u_j$, $v_j$ converge to $u_{\infty},v_{\infty}$ strongly
in $BV(\partial B(0,\rho))$. Then
$$
\int_{\partial B(0,\rho)}u_j\,dv_j\to\int_{\partial B(0,\rho)} u_\infty\,dv_\infty.
$$
\end{lemma}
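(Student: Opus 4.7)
The plan is to split the difference of the Riemann--Stieltjes integrals as
\begin{equation*}
\int_{\partial B(0,\rho)}u_j\,dv_j - \int_{\partial B(0,\rho)}u_\infty\,dv_\infty
= \int_{\partial B(0,\rho)}(u_j - u_\infty)\,dv_j + \int_{\partial B(0,\rho)}u_\infty\,d(v_j - v_\infty),
\end{equation*}
and to show that each of the two pieces tends to zero. The main ingredient is the classical estimate $\bigl|\int w\,d\psi\bigr|\le \|w\|_\infty\,|D\psi|(\partial B(0,\rho))$ for a continuous $w$ and a $BV$ integrator $\psi$ on the one-dimensional loop $\partial B(0,\rho)$; this follows from the definition of the Riemann--Stieltjes integral recalled at the start of the subsection, together with the fact that the total variation of a continuous $BV$ function on the circle in the Riemann--Stieltjes sense agrees with $|D\psi|(\partial B(0,\rho))$.

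The second piece is handled directly: the above bound gives
$\bigl|\int u_\infty\,d(v_j-v_\infty)\bigr|\le \|u_\infty\|_\infty\,|D(v_j-v_\infty)|(\partial B(0,\rho))$,
which tends to $0$ because strong $BV$-convergence of $v_j$ to $v_\infty$ means in particular that the total variations $|D(v_j-v_\infty)|(\partial B(0,\rho))$ go to $0$.

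For the first piece, strong $BV$-convergence guarantees that the sequence $|Dv_j|(\partial B(0,\rho))$ is bounded, so it suffices to prove that $u_j\to u_\infty$ uniformly on $\partial B(0,\rho)$. To get uniform convergence from the hypotheses, I exploit that $u_j-u_\infty$ is continuous on the one-dimensional set $\partial B(0,\rho)$ and that both $\|u_j-u_\infty\|_{L^1}\to 0$ and $|D(u_j-u_\infty)|(\partial B(0,\rho))\to 0$: by the mean-value property, for each $j$ there exists a point $t_j\in\partial B(0,\rho)$ with $|(u_j-u_\infty)(t_j)|\le\|u_j-u_\infty\|_{L^1}/(2\pi\rho)$, and then for any other point $t$
\begin{equation*}
|(u_j-u_\infty)(t)|\le |(u_j-u_\infty)(t_j)| + |D(u_j-u_\infty)|(\partial B(0,\rho)),
\end{equation*}
so both summands vanish uniformly in $t$ as $j\to\infty$.

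The only mildly delicate step is the Riemann--Stieltjes bound in its stated form; this is essentially one-dimensional classical analysis and should pose no real obstacle. The rest is a routine reduction from strong $BV$-convergence plus continuity to uniform convergence, followed by the standard decomposition of the Stieltjes integral.
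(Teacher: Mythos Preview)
Your proof is correct and follows essentially the same approach as the paper: the key observation is that strong $BV$-convergence of continuous functions on a one-dimensional set implies uniform convergence, after which the Riemann--Stieltjes integrals converge by the standard decomposition you wrote out. The paper's proof is a one-liner citing exactly this fact; you have simply supplied the details of the uniform-convergence step and the routine Stieltjes estimate.
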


\begin{proof} It is an immediate consequence of the fact that
strong convergence of continuous functions in the $BV$ norm implies
the uniform convergence (if the dimension is one).
\end{proof}

\begin{thm}\label{t:absjac}
Let $U\subset\er^2$ be an open set and $h\in BV(U)$ be continuous.
Suppose that $\djac_h\in\M(U)$.
Then the absolutely continuous part of $\djac_h$ is $J_h$ for 
a.e. $x\in \Omega$.
\end{thm}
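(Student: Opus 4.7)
The plan is to follow a blowup strategy that couples the differentiation theorem for Radon measures with the boundary integral representation of $\djac_h$ supplied by Lemma \ref{l:green}. By the Radon–Nikodym / Besicovitch differentiation theorem, for a.e.\ $x_0\in U$ the density
$$
(\djac_h)_a(x_0) = \lim_{\rho\to 0^+}\frac{\djac_h(B(x_0,\rho))}{\pi\rho^2}
$$
exists and realizes the a.c.\ part. Since $h\in BV$, we may in addition assume that at $x_0$ the map $h$ is approximately differentiable with derivative $A=Dh(x_0)$ (so that $J_h(x_0)=\det A$), that the singular part of $|Dh|$ has zero upper density at $x_0$, and that $x_0$ is a Lebesgue point of $h$. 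The goal is to show that for such $x_0$ the above density equals $\det A$.

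Fix such an $x_0$. By Lemma \ref{l:green} applied on $B(x_0,\rho)\subset U$, for a.e.\ $\rho>0$ small we have
$$
\djac_h(B(x_0,\rho))=\int_{\partial B(x_0,\rho)} u\,dv.
$$
Perform the blow-up $h_\rho(y):=\rho^{-1}\bigl(h(x_0+\rho y)-h(x_0)\bigr)$, $y\in \overline{B(0,1)}$. A routine change of variables in the Riemann–Stieltjes integral rewrites the quotient as
$$
\frac{\djac_h(B(x_0,\rho))}{\pi\rho^2}=\frac{1}{\pi}\int_{\partial B(0,1)}(h_\rho)_1\,d(h_\rho)_2 .
$$
For the affine map $L(y)=Ay$, a direct computation in polar coordinates gives $\int_{\partial B(0,1)} L_1\,dL_2=\pi\det A$. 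Thus it suffices to show that for some sequence $\rho_k\to 0$ the restrictions $h_{\rho_k}|_{\partial B(0,1)}$ converge to $L|_{\partial B(0,1)}$ in such a way that Lemma \ref{l:bvconv} applies, i.e.\ strongly in $BV(\partial B(0,1))$.

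To produce this sequence I would argue in two steps. First, the choice of $x_0$ guarantees that $h_\rho\to L$ in $L^1(B(0,1))$ and that the rescaled total variations $|Dh_\rho|(B(0,1))=\rho^{-1}|Dh|(B(x_0,\rho))$ are bounded and in fact converge to $|DL|(B(0,1))$ (the singular part disappears in the limit by the zero-density assumption). Second, a Fubini-style slicing in the radial variable—using that the restrictions $h|_{\partial B(x_0,r)}$ are $BV$ for a.e.\ $r$ and that $\int_0^\rho |Dh|_{\partial B(x_0,r)}(\partial B(x_0,r))\,dr\le |Dh|(B(x_0,\rho))$—allows one to extract a sequence $\rho_k\to 0$ along which, on the unit circle, $h_{\rho_k}\to L$ both in $L^1$ and with total variations converging to $|DL|(\partial B(0,1))$. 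Lemma \ref{l:bvconv} then yields the convergence of the boundary integrals, so $(\djac_h)_a(x_0)=\det A=J_h(x_0)$.

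The main obstacle is this last point: approximate differentiability at $x_0$ is a statement about two-dimensional averages on $B(x_0,\rho)$ and does not by itself control $h$ on individual circles. The key is the combination of (i) the slicing theorem for $BV$ functions, which makes the radial Fubini argument legitimate, and (ii) the vanishing-density hypothesis on the singular part of $|Dh|$, which is what forces the one-dimensional total variations on the selected circles to converge to the correct limit $|DL|(\partial B(0,1))$ rather than overshooting. Once the correct subsequence of radii is identified, Lemma \ref{l:bvconv} does the rest.
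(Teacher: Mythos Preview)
Your strategy---Besicovitch differentiation at good points, Lemma \ref{l:green} to pass to a boundary integral, blow-up, and Lemma \ref{l:bvconv} to pass to the limit---is exactly the paper's. Two points deserve care. First, the rescaling is $|Dh_\rho|(B(0,1))=\rho^{-2}|Dh|(B(x_0,\rho))$, not $\rho^{-1}$; in dimension two the measure $Dh$ picks up a factor $\rho^{-2}$. Second, and more substantively, Lemma \ref{l:bvconv} asks for \emph{strong} convergence in $BV(\partial B)$, i.e.\ $|D(h_{\rho_k}-L)|(\partial B(0,1))\to 0$, whereas what you state is only strict convergence (``total variations converging to $|DL|(\partial B(0,1))$''). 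The repair is immediate: apply your slicing inequality to $g:=h-L$ instead of $h$. Since $|Dg|(B(x_0,r))=o(r^2)$ at your good point $x_0$, one gets $\int_0^R |D(g|_{\partial B(x_0,r)})|(\partial B(x_0,r))\,dr=o(R^2)$, and an averaging argument then produces radii $r_k\to 0$ (which can also be chosen in the full-measure set where Lemma \ref{l:green} applies) with $|D(g|_{\partial B(x_0,r_k)})|(\partial B(x_0,r_k))\to 0$, which is the required strong convergence.

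The paper organizes the same ingredients slightly differently and thereby sidesteps this selection issue: it fixes a sequence $r_j\to 0$ in advance, proves once that the two-dimensional blow-ups $h_j\to h_\infty$ \emph{strongly} in $BV(B(0,1))$ (this is precisely where the Lebesgue-point and vanishing-singular-density hypotheses are used), and then invokes the $BV$ slicing theorem in polar coordinates to obtain a single $\rho\in(0,1)$ for which all the traces $h_j|_{\partial B(0,\rho)}$ converge strongly. Choosing the sequence first and the radius afterwards makes the ``a.e.\ $\rho$'' from slicing do all the work.
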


\begin{proof}
Write $h$ is coordinates as $h=(u,v)$.
Let $\mu=\djac_h$ and $\theta$ be the density of the absolutely continuous
part of $\mu$. Recall that the approximative derivative $\nabla h$ is the density of the 
absolutely continuous part of $Dh$ and $J_h=\det\nabla h$.
Further, $D_sh$ is the singular part of $Dh$ and $\mu_s$ is the singular
part of $\mu$.

Let $x_0$ be a point satisfying the following properties:
\eqn{x01}
$$
x_0\text{ is a Lebesgue point for }
\ \nabla h \text{ and }\theta,
$$
\eqn{x02}
$$
\lim_{r\to 0}\frac{|\mu_s|(B(x_0,r))+|D_sh|(B(x_0,r))}{|B(x_0,r)|}=0
$$
Then almost every point $x_0\in U$ has the desired properties, (see \cite[Theorems 2.12 and 2.17]{Mattila}).
For simplicity assume that $x_0=h(x_0)=0$.
Choose a sequence $r_j\searrow 0$ such that $B(0,r_1)\subset\subset\Omega$
and denote
$$
\aligned
h_j(y)&=(u_j(y),v_j(y))=\frac{1}{r_j}h(r_jy),\qquad y\in B(0,1),\\
h_{\infty}(y)&=(u_{\infty}(y),v_{\infty}(y))=\nabla h(0)y.
\endaligned
$$
Now, consider a radius $\rho\in (0,1)$ with the following properties:
\eqn{rho1}
$$
\int_{\partial B(0,\rho r_j)}u\,dv = \mu(B(0,\rho r_j)),
$$
\eqn{rho2}
$$
u_j\to u_{\infty}\text{ and }v_j\to v_{\infty}
\text{ strongly in }BV(\partial B(0,\rho)),
$$
The proof of existence of such a radius is postponed for a while.
We have by \eqref{x02} and Lemma \ref{l:bvconv}
$$
\aligned
\theta(0)&=\lim_{j\to\infty}\vint_{B(0,\rho r_j)}\theta(y)\,dy
=\lim_{j\to\infty}\frac{\mu(B(0,\rho r_j))}{|B(0,\rho r_j)|}
\\&
=\lim_{j\to\infty}\frac1{|B(0,\rho r_j)|}\int_{\partial B(0,\rho r_j)}u\,dv
=\lim_{j\to\infty}\frac1{|B(0,\rho)|}\int_{\partial B(0,\rho)}u_j\,dv_j
\\&
=\frac1{|B(0,\rho)|}\int_{\partial B(0,\rho)}u_\infty\,dv_\infty
=\vint_{B(0,\rho)} J_{h_{\infty}}(y)\,dy
\\&=J_h(0).
\endaligned
$$
Now, by Lemma \ref{l:green}, almost every $\rho\in (0,1)$ satisfies
\eqref{rho1}.
 To show \eqref{rho2} we show first that $h_j$ converges to $h_\infty$ strongly in $BV(B).$
As the $L^1$-convergence follows from the definition of approximate differentiability it suffices to consider the convergence of the derivative.

By \cite[Remark 3.18]{AFP} we have for every Borel set $A\subset B$ 
\eqn{scaleDh}
$$
Dh_j(A)=\frac{1}{r_j^2}\left(\int_{r_jA}\nabla h(x)dx + D_sh(r_jA)\right).
$$
We now establish the strong convergence of the derivative. Using \eqref{scaleDh} we estimate
\eqn{strongB}
$$
\aligned
|Dh_j-Dh_\infty|(B)&= \sup_{\{\varphi\in C_0(B)\ccolon |\varphi|\leq1\}} \int_B \varphi d(Dh_j-Dh_\infty)\\&\leq 
\sup_\varphi \frac{1}{r_j} \int_{r_jB}|\nabla h(x)-\nabla h(0)|dx + \frac{|D_sh|(r_jB)}{r_j^2}\rightarrow 0 
\endaligned
$$
Here the convergence on the last step follows from \eqref{x01} and \eqref{x02}.
Finally the strong convergence on almost every $\rho\in(0,1)$ follows from this and \cite[Theorem 3.103]{AFP} applied to polar coordinates. 

\end{proof}

\begin{proof}[Proof of Theorem \ref{acpart}]
This follows from Theorem \ref{t:absjac} using
Lemma \ref{l:absdisint}.
\end{proof}


\begin{thebibliography}{00}

\bibitem{AFP}
  \by{\name{Ambrosio}{L.}, \name{Fusco}{N.} and \name{Pallara}{D.}}
  \book{Functions of bounded variation and free discontinuity problems}
  \publ{Oxford Mathematical Monographs.
    The Clarendon Press, Oxford University Press, New York, 2000}
  \endbook
  
   

\bibitem{Ball}
  \by{\name{Ball}{J.}}
  \paper{Global invertibility of Sobolev functions and the
    interpenetration of matter}
  \jour{Proc. Roy. Soc. Edinburgh Sect. A}
  \vol{88\rm 3--4} 
  \yr{1981}
  \pages{315--328}
  \endpaper

\bibitem{C}
  \by{\name{Cesari}{L.}}
  \book{Surface area}
  \publ{Annals of Mathematics Studies, no. 35. Princeton University Press, Princeton, N. J., 1956. x+595 pp}
  \endbook

\bibitem{CiaNec} 
  \by{\name{Ciarlet}{P. G.} and \name{Ne\v{c}as}{J}}
  \paper{Injectivity and self-contact in nonlinear elasticity}
  \jour{Arch. Rational Mech. Anal.}
  \vol{97 \rm 3} 
  \yr{1987}
  \pages{171-188}
  \endpaper

\bibitem{CL}
  \by{\name{Conti}{S.} and \name{De Lellis}{C.}}
  \paper{Some remarks on the theory of elasticity for compressible Neohookean materials}
  \jour{Ann. Sc. Norm. Super. Pisa Cl. Sci. (5)}
  \vol{2}
  \pages{521--549}
  \yr{2003}
  \endpaper

\bibitem{CHM}
  \by{\name{Cs\"ornyei}{M.}, \name{Hencl}{S.} and \name{Mal\'y}{J.}}
  \paper{Homeomorphisms in the Sobolev space $W^{1,n-1}$}
  \jour{J. Reine Angew. Math}
  \vol{644}
  \pages{221--235}
  \yr{2010}
  \endpaper

\bibitem{D}
  \by{\name{De Lellis}{C.}}
  \paper{Some fine properties of currents and applications to distributional Jacobians}
  \jour{Proc. Roy. Soc. Edinburgh}
  \vol{132}
  \pages{815--842}
  \yr{2002}
  \endpaper
  
\bibitem{DDSS}
  \by{\name{Di Gironimo}{P.}, \name{D'Onofrio}{L.}, \name{Sbordone}{C.} and \name{Schiattarella}{R.}}
  \paper{Anisotropic {S}obolev homeomorphisms}
  \jour{Ann. Acad. Sci. Fenn. Math.}
  \vol{36 \rm no. 2}
  \pages{593--602}
  \yr{2011}
  \endpaper  

\bibitem{DS}
  \by{\name{D'Onofrio}{L.} and \name{Schiattarella}{R.}}
  \paper{On the total variations for the inverse of a BV-homeo\-morphism}
  \jour{Adv. Calc. Var.}
  \vol{6}
  \pages{321--338}
  \yr{2013}
  \endpaper

\bibitem{DMSS}
  \by{\name{D'Onofrio}{L.}, \name{Mal\'y}{J.}, \name{Sbordone}{C.}
   and \name{Schiattarella}{R.}}
  \paper{On BV-homeomorphisms}
  \jour{In preparation}
  \endprep




  


\bibitem{Fe}
  \by{\name{Federer}{H.}}
  \book{Geometric measure theory}
  \publ{Die Grundlehren der mathematischen Wissenschaften,
    Band 153 Springer-Verlag, New York}
  \yr{Second edition 1996}
  \endbook



\bibitem{FG}
  \by{\name{Fonseca}{I.} and \name{Gangbo}{W.}}
  \book{Degree Theory in Analysis and Applications}
  \publ{Clarendon Press, Oxford, 1995}
  \endbook

\bibitem{FMS}
\by{\name{Fusco}{N.},\name{Moscariello}{G.} and \name{Sbordone}{C.}}
\paper{The limit of $W^{1,1}$ homeomorphisms with finite distortion}
\jour{Calc. Var.}
\vol{33}
\pages{377-390}
\yr{2008}
\endpaper

\bibitem{GT}
\by{\name{Gilbarg}{D.} and \name{Trudinger}{N.\,S.}}
\book{Elliptic partial differential equations of second order, Second edition.} 
\publ{Grundlehren der Mathematischen Wissenschaften 224}
\publ{Springer-Verlag, 
Berlin, 1983.}
\endbook




  
  
  




\bibitem{H2}
  \by{\name{Hencl}{S.}}
  \paper{Sharpness of the assumptions for the regularity of a homeomorphism}
  \jour{Michigan Math. J.}
  \vol{59}
  \yr{2010}
  \pages{667--678}
  \endpaper



\bibitem{HKL}
  \by{\name{Hencl}{S.}, \name{Kauranen}{A.} and \name{Luisto}{R.}}
  \paper{Weak regularity of the inverse under minimal assumptions}
  \jour{preprint}
  \endprep
  

\bibitem{HK}
  \by{\name{Hencl}{S.} and \name{Koskela}{P.}}
  \paper{Regularity of the inverse of a planar Sobolev homeomorphism}
  \jour{Arch. Rational Mech. Anal}
  \vol{ 180} 
  \yr{2006}
  \pages{75--95}
  \endpaper

\bibitem{HKbook}
  \by{\name{Hencl}{S.} and \name{Koskela}{P.}}
  \book{Lectures on Mappings of finite distortion}
  \publ{Lecture Notes in Mathematics 2096, Springer, 2014, 176pp}
  \endbook


\bibitem{HKO}
  \by{\name{Hencl}{S.}, \name{Koskela}{P.} and \name{Onninen}{J.}}
  \paper{Homeomorphisms of bounded variation}
  \jour{Arch. Rational Mech. Anal}
  \vol{186}
  \yr{2007}
  \pages{351--360}
  \endpaper



\bibitem{Mattila}
        \by{\name{Mattila}{P.}}
        \book{Geometry of Sets and Measures in Euclidean Spaces}
        \publ{Cambridge Studies in Advanced Mathematics, Cambridge University 
Press,  1999}
        \endbook






\bibitem{M}
  \by{\name{M\"uller}{S.}}
  \paper{\scriptsize $Det=det$ A remark on the distributional determinant}
  \jour{ C. R. Acad. Sci. Paris S�r. I Math.}
  \vol{311 \rm no. 1}
  \yr{1990}
  \pages{13--17}
  \endpaper


\bibitem{MST}
  \by{\name{M\"uller}{S.}, \name{Spector}{S. J.} and \name{Tang}{Q.}}
  \paper{Invertibility and a topological property of Sobolev maps}
  \jour{Siam J. Math. Anal.}
  \vol {27} 
  \yr{1996}
  \pages{959--976}
  \endpaper

\bibitem{MTY}  
\by{\name{M\"uller}{S.}, \name{Tang}{Q.} and \name{Yan}{B.\,S.}}
  \paper{On a new class of elastic deformations not allowing for cavitation}
  \jour{Ann. Inst. H. Poincar\'e Anal. Non Lin\'eaire}
  \vol {11\rm no. 2} 
  \yr{1994}
  \pages{217--243}
  \endpaper
  
  

\bibitem{OR}
        \by{\name{Outerelo}{E.}, \name{Ruiz}{J. M..}}
        \book{Mapping degree theory}
        \publ{Graduate Studies in Mathematics, 108, American Mathematical Society, Providence, RI; Real Sociedad
              Matem\'{a}tica Espa\~{n}ola, Madrid,  2009}
\vol{108}
        \endbook
        
\bibitem{Quit}
  \by{\name{Quittnerov\'a}{K.}}
  \book{Functions of bounded variation of several variables (In Slovak)}
  \publ{Master thesis, Faculty of Mathematics and Physics, Charles University, Prague, 2007}
  \endbook        

  


\end{thebibliography}
\end{document}